\newtheorem{theorem}{Theorem}[section]
\newtheorem{lemma}[theorem]{Lemma}
\newtheorem{proposition}[theorem]{Proposition}
\newtheorem{corollary}[theorem]{Corollary}
\theoremstyle{definition}
\newtheorem{notation}[theorem]{Notation}
\theoremstyle{definition}
\newtheorem{definition}[theorem]{Definition}
\theoremstyle{remark}
\newtheorem{remark}[theorem]{Remark}
\newtheorem*{remark*}{Remark}
\numberwithin{equation}{section}
\newenvironment{proof-sketch}{\textit{Sketch of proof.~}}{\quad}
\newcommand{\R}{\mathbb{R}}
\newcommand{\C}{\mathbb{C}}
\newcommand{\N}{\mathbb{N}}
\newcommand{\dx}{\mathrm{\,d}}
\newcommand{\n}{\mathbf{n}}
\newcommand{\x}{\mathbf{x}}
\newcommand{\y}{\mathbf{y}}
\newcommand{\z}{\mathbf{z}}
\newcommand{\pa}{\partial}
\newcommand{\eps}{\varepsilon}
\renewcommand{\Re}{\mathrm{Re}}
\DeclareMathOperator{\RE}{Re}
\newcommand{\Mat}{\mathsf{Mat}}
\newcommand{\jac}{\mathsf{Jac}}
\newcommand{\supp}{\mathsf{supp\,}}
\newcommand{\dive}{\mathrm{div}}
\newcommand{\D}{\mathsf{Dom}}
\title[On the MIT Bag Model]{On the MIT Bag Model:\\ Self-adjointness and Non-relativistic Limit}
\author{N. Arrizabalaga}
\address[N. Arrizabalaga]{Departamento de Matem\'aticas, Universidad del Pa\'is Vasco/Euskal Herriko Unibertsitatea (UPV/EHU), 48080 Bilbao, Spain}
\email{naiara.arrizabalaga@ehu.eus}
\author{L. Le Treust}
\address[L. Le Treust]{IRMAR, Universit\'e de Rennes 1, Campus de Beaulieu, F-35042 Rennes cedex, France}
\email{loic.letreust@univ-rennes1.fr}
\author{N. Raymond}
\address[N. Raymond]{IRMAR, Universit\'e de Rennes 1, Campus de Beaulieu, F-35042 Rennes cedex, France}
\email{nicolas.raymond@univ-rennes1.fr}
\keywords{Relativistic particle in a box, MIT bag model, Robin Laplacian}
\begin{document}

\maketitle

\begin{abstract}
This paper is devoted to the mathematical investigation of the MIT bag model, that is the Dirac operator on a smooth and bounded domain of $\R^3$ with certain boundary conditions. We prove that the operator is self-adjoint and, when the mass $m$ goes to $\pm\infty$, we provide spectral asymptotic results.
\end{abstract}
\tableofcontents

\section{Introduction}
\subsection{The physical context}
In elementary particles physics \cite{griffiths2008introduction}, the \emph{strong force} is one of the four known fundamental interaction forces along with the electromagnetism, the weak interaction and the gravitation. It is responsible for the confinement of the quarks inside composite particles called hadrons such as protons, neutrons or mesons. Its force-carrying (gauge bosons) particles are called the gluons (the force-carrying particles of the electromagnetism are the photons) and they carry together with the quarks, a type of charges called the color charges. Their interactions are detailed in the theory of \emph{quantum chromodynamics} (the theory of electromagnetism is called quantum electrodynamics). 

\subsubsection{The standard model}
Following the work of Gell-Mann and Zweig and the deep inelastic scattering experiments held at the Stanford Linear Accelerator Center in the $'60$, physicists introduced in the mid-$'70$, the standard model \cite{hosaka2001quarks} in an attempt to give a unified framework for the elementary particle physics. It turned out that this model has been very fruitful for it allowed to predict the existence of many particles. Despite of its success, the confinement of the quarks remains badly understood because of the complexity of the associated equations.
\subsubsection{An attempt to better understand the quarks confinement}
In parallel to the introduction of the standard model, Chodos, Jaffe, Johnson, Thorn, and Weisskopf \cite{MIT061974,MIT101974,MIT101975,johnson,hosaka2001quarks}, physicists at the MIT, developped a simplified phenomenological model to get a better understanding of the phenomenons involved in the quark-gluon confinement. Following the results of the experimentations held at that time, they chose to include several qualitative properties of the quarks: 
\begin{enumerate}[-]
	\item the perfect confinement of the quarks inside the hadrons\footnote{No isolated quark has been observed yet.},
	\item the relativistic nature of the quarks\footnote{For light quarks, the non-relativistic approximation $E = mc^2$ is not valid and the Schr\"odinger operator $-\Delta$ has to be replaced by the Dirac one to describe the kinetic energy.}.
\end{enumerate} 
The region of space $\Omega$ where the quarks live is called the bag and the model is the \emph{MIT bag model}. Let us remark that the MIT bag model can also be viewed as a model for a relativistic particle confined in a box. In the non-relativistic setting, the Dirac operator is replaced by the Dirichlet Laplacian and the associated model appears in many introduction courses of quantum physics \cite{MR0129790}. 
\\
Let us also mention that the two dimensional equivalent of the MIT bag model appears in the study of graphene and is referred to as the infinite mass boundary condition (see \cite{benguria2016spectral,stockmeyer2016infinite} and the references therein).
\subsection{The MIT bag Dirac operator}
In the whole paper,  $\Omega$ denotes a fixed bounded domain of $\R^3$ with regular boundary and $m$ is a real number. The Planck constant and the velocity of light are assumed to be equal to $1$.

Let us recall the definition of the Dirac operator associated with the energy of a relativistic particle of mass $m$ and spin $\frac{1}{2}$ (see \cite{Thaller1992}). The Dirac operator is a first order differential operator, acting on $L^2(\Omega,\mathbb{C}^4)$ in the sense of distributions, defined by 
\begin{equation}
H=\alpha\cdot D+m\beta \,,\qquad D=-i\nabla\,,
\end{equation}
where $\alpha=(\alpha_1,\alpha_2,\alpha_3)$, $\beta$ and $\gamma_5$ are the $4\times4$ Hermitian and unitary matrices given by
\[
	\beta=\left(\begin{array}{cc}1_2&0\\0&-1_2\end{array}\right),\;\gamma_5=\left(\begin{array}{cc}0&1_2\\1_2&0\end{array}\right),\;\alpha_k=\left(\begin{array}{cc}0&\sigma_k\\\sigma_k&0\end{array}\right) \mbox{ for }k=1,2,3\,.
\]
Here, the Pauli matrices $\sigma_1,\sigma_2$ and $\sigma_3$ are defined by
\[
	\sigma_1 = \left(
		\begin{array}{cc}
			0&1\\1&0
		\end{array}
	\right),
	\quad
	\sigma_2 = \left(
		\begin{array}{cc}
			0&-i\\i&0
		\end{array}
	\right),
	\quad
	\sigma_3 = \left(
		\begin{array}{cc}
			1&0\\0&-1
		\end{array}
	\right)\,,
\]
and $\alpha \cdot X$ denotes $\sum_{j=1}^3\alpha_j X_j$ for any $X = (X_1,X_2,X_3)$.
Let us now impose the boundary conditions under consideration in this paper and define the associated unbounded operator.
\begin{notation}
	In the following , $\Gamma := \pa \Omega$ and for all $\x\in\Gamma$, $\n(\x)$ is the outward-pointing unit normal to the boundary.
\end{notation}

\begin{definition}\label{def.MIT}
	The MIT bag Dirac operator $(H_m^\Omega,\mathcal{D}(H_m^\Omega))$ is defined on the domain
	\[
		\mathsf{Dom}(H_m^\Omega) = \{\psi\in H^1(\Omega,\mathbb{C}^4)~:~\mathcal{B}\psi = \psi~\text{on}~\Gamma\}\,,\qquad\mbox{ with }\quad\mathcal{B}=-i\beta(\alpha\cdot \n)\,,
	\]
	by $H_m^\Omega\psi=H\psi$ for all $\psi\in\D(H_m^\Omega)$. Note that the trace is well-defined by a classical trace theorem.
\end{definition}
\begin{notation} 
We will denote $H = H^\Omega_m$ when there is no risk of confusion. We denote $\braket{\cdot,\cdot}$ the $\C^4$ scalar product (antilinear w.r.t. the left argument) and $\braket{\cdot,\cdot}_{U}$ the $L^2$ scalar product on the set $U$.
\end{notation}
\begin{remark}\label{rk:boundfllux}
	The operator $\mathcal{B}$ defined for all $\x\in \Gamma$ is a Hermitian matrix which satisfies $\mathcal{B}^2 = 1_4$ so that its spectrum is $\{\pm 1\}$. Both eigenvalues have multiplicity two. Thus, the MIT bag boundary condition imposes the wavefunctions $\psi$ to be eigenvectors of $\mathcal{B}$ associated with the eigenvalues $+1$ . This boundary condition is chosen by the physicists \cite{johnson} so as to get a vanishing normal flow at the bag surface $-i\n\cdot \mathbf{j} = 0$ at the boundary $\Gamma$ where the current density $\mathbf{j}$ is defined by
	\[
		\mathbf{j} = \braket{\psi,\alpha \psi}.
	\]
	The opposite boundary condition $\psi \in \ker(1_4+\mathcal{B})$ is discussed in Section \ref{sec:negativemass}.
\end{remark}
Let us now describe our results.
\subsection{Results}

\subsubsection{Self-adjointness}
The following theorem gathers some fundamental spectral properties of the MIT bag Dirac operator that we establish in this paper and that are related to its self-adjointness.
\begin{theorem}\label{theo:selfadj}
	Let $\Omega$ be a nonempty, bounded and regular open set in $\R^3$ and $m\in \R$. The following properties hold true.
	\begin{enumerate}[\rm i.]
		\item \label{eq:theo11} The operator $(H,\D(H))$ is a self-adjoint operator with compact resolvent.
		\item \label{eq:theo12} There is a non-decreasing sequence $ (\mu_n(m))_{n\geq 1}\subset \R_+^*$ such that the spectrum of $H$, denoted by $\mathsf{sp}(H)$, is 
		\[
			\mathsf{sp}(H) = \{\pm\mu_{n}(m),\;n\geq 1\}\,.
		\]
		\item \label{eq:theo13} Each eigenvalue $\mu_{n}(m)$ has pair multiplicity.
		\item \label{eq:theo14} For each $\psi\in \D(H)$, we have
			\begin{equation}\label{eq.square1}
				\|H\psi\|^2_{L^2(\Omega)} = \|\alpha\cdot\nabla\psi\|^2_{L^2(\Omega)} + m\|\psi\|^2_{L^2(\pa\Omega)} + m^2\|\psi\|^2_{L^2(\Omega)}
			\end{equation}
			and
			\begin{equation}\label{eq.square2}
				\|\alpha\cdot\nabla\psi\|^2_{L^2(\Omega)} =  \|\nabla\psi\|^2_{L^2(\Omega)}+\frac{1}{2} \int_{\pa\Omega}\kappa|\psi|^2\dx s
			\end{equation}
			where $\kappa$ is the trace of the Weingarten map:
	\[
		\begin{array}{llll}
			d\n_s: &T_s \partial \Omega&\longrightarrow &\R^3 \\
			&v&\longmapsto&\partial_v \n(s)\,.
		\end{array}
	\]

	\end{enumerate}
\end{theorem}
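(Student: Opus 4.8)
The plan is to establish the four items in a logical order where the algebraic identities \eqref{eq.square1}--\eqref{eq.square2} come first, since they drive everything else. First I would prove \eqref{eq.square1}: for $\psi \in \D(H)$, expand $\|H\psi\|^2 = \|(\alpha\cdot D)\psi + m\beta\psi\|^2$, using that $\alpha\cdot D$ and $\beta$ are formally symmetric and $\beta^2 = 1_4$, so the square is $\|(\alpha\cdot D)\psi\|^2 + m^2\|\psi\|^2 + 2m\,\RE\braket{(\alpha\cdot D)\psi,\beta\psi}_\Omega$. The cross term is where the boundary enters: integrating by parts, $\braket{(\alpha\cdot D)\psi,\beta\psi}_\Omega - \braket{\psi,(\alpha\cdot D)\beta\psi}_\Omega$ produces a boundary term $\int_\Gamma \braket{\psi, (-i)(\alpha\cdot\n)\beta\psi}\dx s$; combining with the anticommutation $\{\alpha\cdot D,\beta\} $ being purely a multiplication operator on the boundary and using the MIT condition $\mathcal{B}\psi = -i\beta(\alpha\cdot\n)\psi = \psi$ on $\Gamma$, the boundary integral collapses to $m\|\psi\|^2_{L^2(\Gamma)}$. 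The key computation is $\braket{\psi, -i(\alpha\cdot\n)\beta\psi} = \braket{\psi, \beta\mathcal{B}\psi}$-type manipulation together with $\braket{\mathcal{B}\psi,\beta\psi} = \braket{\psi,\mathcal{B}\beta\psi}$ and the relation between $\mathcal{B}$, $\beta$ — I expect $\RE\braket{\psi,\beta\psi}_\Gamma = \|\psi\|^2_\Gamma$ to drop out once one uses $\mathcal{B}\psi = \psi$, because $\beta\mathcal{B} = -\mathcal{B}\beta$ implies the real part localizes correctly. Next, \eqref{eq.square2} is the classical identity $\|(\alpha\cdot\nabla)\psi\|^2 = \|\nabla\psi\|^2 + (\text{boundary curvature term})$: one uses $(\alpha\cdot\nabla)^2 = \Delta$ as operators, integrates by parts twice, and the difference between $\|\nabla\psi\|^2$ and $-\braket{\Delta\psi,\psi}$ plus the reordering of the two gradient operators yields exactly $\tfrac12\int_\Gamma \kappa|\psi|^2\dx s$, where $\kappa$ is the mean curvature; the geometric computation of the boundary term (differentiating $\n$ along $\Gamma$, using that $\psi$ is an eigenvector of $-i(\alpha\cdot\n)\beta$) is the one genuinely delicate piece.

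For self-adjointness (item \ref{eq:theo11}), the plan is: (a) symmetry of $H$ on $\D(H)$, which follows directly from the boundary-term computation above since the MIT condition is exactly the condition making the boundary term in $\braket{H\psi,\varphi}_\Omega - \braket{\psi,H\varphi}_\Omega$ vanish for $\psi,\varphi \in \D(H)$ — the matrix $-i(\alpha\cdot\n)\beta = \mathcal{B}\beta$ is anti-Hermitian-compatible with the kernel condition in the right way; (b) essential self-adjointness / self-adjointness proper: I would show $H \pm i$ are surjective, or equivalently that $(H^*, \D(H^*)) = (H,\D(H))$, by an elliptic-regularity argument — any $\psi \in \D(H^*)$ a priori in $L^2$ with $H\psi \in L^2$ satisfies a first-order elliptic system, and because the MIT boundary condition satisfies the Lopatinski--Shapiro (coercivity) condition for the Dirac operator, one gets $\psi \in H^1(\Omega)$ and the boundary condition holds, so $\D(H^*) \subseteq \D(H)$. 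Alternatively, and more in the spirit of \eqref{eq.square1}, one can use that identity to get the a priori estimate $\|\psi\|_{H^1}^2 \lesssim \|H\psi\|^2 + \|\psi\|^2$ (absorbing the boundary term $m\|\psi\|^2_\Gamma$ via a trace inequality with small constant, and controlling $\int_\Gamma \kappa|\psi|^2$ likewise), which gives the closedness of $\D(H)$ in the graph norm and feeds the surjectivity argument. The compact resolvent then follows from the compact embedding $H^1(\Omega) \hookrightarrow L^2(\Omega)$ once $\D(H) \hookrightarrow H^1(\Omega)$ continuously.

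Items (ii) and (iii) are consequences of a symmetry. I would exhibit a unitary (or anti-unitary) operator anticommuting with $H$ to get the symmetry of the spectrum about $0$: the natural candidate is $\gamma_5$ composed with complex conjugation, or $\beta\gamma_5$, paired with the observation that $\gamma_5$ anticommutes with $\alpha\cdot D$ but commutes with $m\beta$, so one needs to combine it with a charge-conjugation-type map $C$ so that $CHC^{-1} = -H$; one must check $C$ preserves $\D(H)$, i.e. commutes suitably with $\mathcal{B}$. Since $\mathcal{B} = -i\beta(\alpha\cdot\n)$ and $0 \notin \mathsf{sp}(H)$ (which follows because \eqref{eq.square1}--\eqref{eq.square2} give a strictly positive lower bound $\|H\psi\|^2 \geq c\|\psi\|^2$ for $m$ in a suitable range, and for the remaining $m$ one argues $H\psi = 0$ forces $\psi \equiv 0$ via the same identities together with unique continuation), the spectrum is a symmetric discrete set $\{\pm\mu_n(m)\}$ with $\mu_n(m) > 0$. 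The pair multiplicity in (iii) comes from a second symmetry commuting with $H$ and squaring to $-1$ (a quaternionic / Kramers-type structure, e.g. an anti-unitary $T$ with $T^2 = -1$, $[T,H]=0$, $[T,\mathcal{B}]=0$), which forces all eigenspaces to be even-dimensional. The main obstacle I anticipate is the self-adjointness step — specifically, verifying that the MIT boundary condition is elliptic in the Lopatinski--Shapiro sense (or, equivalently, producing the $H^1$ a priori estimate with correct control of both boundary contributions), since this is where the special structure of $\mathcal{B} = -i\beta(\alpha\cdot\n)$ is essential and a generic first-order boundary condition would fail.
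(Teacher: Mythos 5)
Your treatment of \eqref{eq.square1} (expand the square, integrate by parts, use $\mathcal{B}\psi=\psi$ to turn the cross term into $m\|\psi\|^2_{L^2(\Gamma)}$) and of points \ref{eq:theo12}--\ref{eq:theo13} (an anti-unitary conjugation $C$ anticommuting with $H$, and a Kramers-type anti-unitary $T$ with $T^2=-1$ commuting with $H$ and preserving $\D(H)$) is in line with the paper. The genuine gap is in point \ref{eq:theo11}. Your second route would not work as stated: the identity \eqref{eq.square1} and the ``absorb the boundary term by a trace inequality'' estimate are only available for $\psi\in\D(H)$, so at best they give closedness of the symmetric operator $H$; closed plus symmetric does not imply self-adjoint. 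The entire difficulty is to prove $\D(H^\star)\subset\D(H)$, i.e.\ that $\psi\in L^2(\Omega)^4$ with $\alpha\cdot D\psi\in L^2$ only in the weak adjoint sense is actually in $H^1(\Omega)^4$; for such $\psi$ you cannot even write the boundary terms you propose to control, since no trace exists before the $H^1$ regularity is established. Your first route (Shapiro--Lopatinski ellipticity of the MIT condition and first-order elliptic boundary regularity) is viable, but it is precisely the Calder\'on-projector/pseudodifferential approach of the cited literature, and you never verify the Lopatinski condition for $\mathcal{B}=-i\beta(\alpha\cdot\n)$ --- you name it as the anticipated obstacle and leave it there. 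The paper's contribution is an elementary substitute: an explicit extension operator (Lemma \ref{lem.extHS} for the half-space, Proposition \ref{prop.extension} in tubular coordinates), which reflects $\psi$ across $\Gamma$ conjugated by $\mathcal{B}\circ\phi_p$, extends the coefficients to a Lipschitz field $\tilde\alpha$ so that $\tilde\alpha\cdot D(P\psi)\in L^2$ whenever $\psi\in\D(H^\star)$, proves the graph norm is equivalent to the $H^1$-norm on $C^\infty_0$, and concludes by mollification that $\D(H^\star)\subset H^1(\Omega)$; the boundary condition for $\psi\in\D(H^\star)$ then falls out of the integration-by-parts identity. Either that construction or an actual verification of the ellipticity condition (with the attendant regularity theorem) is missing from your plan.

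A second, smaller gap concerns \eqref{eq.square2}: you correctly isolate the curvature boundary term as the delicate piece, but give no mechanism producing the coefficient $\tfrac12\kappa$. The paper's device is the commutator identity $[\alpha\cdot(\n\times D),\mathcal{B}]=-\kappa\gamma_{5}\mathcal{B}$ of Lemma \ref{eq.anticurv}: after two integrations by parts (first for $\psi\in H^2$, then by density) the boundary contribution is $-\braket{\gamma_{5}\psi,\alpha\cdot(\n\times D)\psi}_{\partial\Omega}$, and since $\mathcal{B}\psi=\psi$ on $\Gamma$ while $\gamma_{5}$ anticommutes with $\mathcal{B}$, this pairing equals one half of the commutator term, i.e.\ $\tfrac12\int_{\Gamma}\kappa|\psi|^2\dx\Gamma$. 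Without this (or an equivalent computation in boundary coordinates), \eqref{eq.square2} --- and hence also the positivity input you want for excluding $0$ from the spectrum --- is not established.
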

Self-adjointness results have already been obtained in the case of $C^\infty$-boundaries in \cite{booss2009calderon} through Calder\'on projections and sophisticated pseudo-differential techniques, and, in two dimensions, with $C^2$ boundaries  \cite{benguria2016spectral} (see also \cite{stockmeyer2016infinite}), using Cauchy kernels and Riemann mapping theorem. 
The proof that we present here relies on simple PDE techniques and the introduction of an extension operator for Sobolev spaces (see \cite[Section 9.2]{brezis2010functional}) and can be generalized to any dimension. Let us also mention that more general local boundary conditions are considered in \cite{booss2009calderon,benguria2016spectral}.
\\
Our other results are of asymptotic nature. They describe the limiting behavior of the eigenvalues of the MIT bag Dirac operator as $m$ tends to $\pm\infty$.
\subsubsection{The MIT bag model with positive mass}

As we can guess from the expressions \eqref{eq.square1} and \eqref{eq.square2}, when $m\to+\infty$, the operator $H^2-m^2$ tends, in some sense, towards the Dirichlet Laplacian on $\Omega$. From the physical point of view, this limit is called the non-relativistic limit since it relates the MIT bag model (relativistic particles in a box) to the model for non-relativistic particles in a box.

From the spectral point of view, we have the following asymptotic result.
\begin{theorem}\label{theo.positive}
Let $-\Delta^\mathsf{Dir}$ be the Laplacian with domain $H^2(\Omega, \C)\cap H^1_{0}(\Omega, \C)$, and let $(\mu^\mathsf{Dir}_{n})_{n\geq 1}$ be the non-decreasing sequence of its eigenvalues. For all $n\geq 1$, we have
\[\mu_{n}(m)-\left(m+\frac{1}{2m}\mu^\mathsf{Dir}_{n}\right)\underset{m\to+\infty}{=}o\left(\frac{1}{m}\right)\,.\]
\end{theorem}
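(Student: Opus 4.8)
The plan is to compare the eigenvalues of $H^2$ with those of $2m\left(-\tfrac12\Delta^{\mathsf{Dir}}\right)+m^2$ via the min-max principle, using the quadratic form identity from Theorem~\ref{theo:selfadj}. Squaring $H$ is natural because, by \eqref{eq.square1}--\eqref{eq.square2}, for $\psi\in\D(H)$ one has
\[
\|H\psi\|^2_{L^2(\Omega)}=\|\nabla\psi\|^2_{L^2(\Omega)}+\tfrac12\int_{\pa\Omega}\kappa|\psi|^2\dx s+m\|\psi\|^2_{L^2(\pa\Omega)}+m^2\|\psi\|^2_{L^2(\Omega)},
\]
and since $H^2$ has eigenvalues $\mu_n(m)^2$, it suffices to show $\mu_n(m)^2=m^2+\mu_n^{\mathsf{Dir}}+o(1)$ as $m\to+\infty$, from which the stated asymptotics for $\mu_n(m)$ follows by Taylor expansion of the square root. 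The boundary term $\tfrac12\int_{\pa\Omega}\kappa|\psi|^2$ is $m$-independent and, crucially, the boundary mass term $m\|\psi\|^2_{L^2(\pa\Omega)}$ has a favorable sign for $m>0$, which is what will force the boundary trace of minimizers to become small.

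First I would set up the min-max for $H^2$ on the form domain and, for the upper bound on $\mu_n(m)^2$, simply insert the $n$-dimensional space spanned by the first $n$ Dirichlet eigenfunctions — these vanish on $\Gamma$, so the boundary terms drop out and one gets directly $\mu_n(m)^2\le m^2+\mu_n^{\mathsf{Dir}}$ modulo checking these functions (or suitable $\C^4$-valued versions of them) lie in $\D(H)$; since they have zero trace, $\mathcal B\psi=\psi$ on $\Gamma$ holds trivially, so this is fine, though one must take scalar Dirichlet eigenfunctions and tensor with a fixed spinor, noting the pair multiplicity. For the lower bound, the point is that any normalized $\psi$ in an $(n)$-dimensional subspace realizing the min-max for $H^2$ satisfies $\|H\psi\|^2\ge m^2$, hence $\|\nabla\psi\|^2+m\|\psi\|^2_{L^2(\pa\Omega)}\le \|H\psi\|^2-m^2+\tfrac12\|\kappa\|_\infty\|\psi\|^2_{L^2(\pa\Omega)}$ is controlled, which (for $m$ large) gives an a priori bound $\|\psi\|^2_{L^2(\pa\Omega)}=O(1/m)$ and $\|\nabla\psi\|=O(1)$ on the relevant subspace. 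Then a trace-type / interpolation inequality of the form $\|\psi\|^2_{L^2(\pa\Omega)}\le \eps\|\nabla\psi\|^2_{L^2(\Omega)}+C_\eps\|\psi\|^2_{L^2(\Omega)}$ lets one absorb the boundary term and compare, for such $\psi$, the form $\|H\psi\|^2-m^2$ with $\|\nabla\psi\|^2$ up to $o(1)$; feeding this back into min-max yields $\mu_n(m)^2\ge m^2+\mu_n^{\mathsf{Dir}}-o(1)$.

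The cleanest way to make the lower bound rigorous is probably to argue that the quadratic form of $H^2-m^2$, restricted to the (bounded in $H^1$) set of relevant quasimodes, is a small perturbation of the Dirichlet form, and then invoke a convergence-of-eigenvalues statement; alternatively one can use the Birman--Schwinger-type observation that on the subspace where $\|\psi\|_{L^2(\pa\Omega)}$ is forced small, $H^2-m^2$ behaves like the Dirichlet Laplacian. I would phrase it as: for any $\delta>0$, for $m$ large enough, $\|H\psi\|^2-m^2\ge (1-\delta)\|\nabla\psi\|^2 - C_\delta\|\psi\|^2$ fails in general but does hold once $\|\psi\|_{L^2(\pa\Omega)}$ is small, and combine a two-sided min-max argument to pin down $\mu_n(m)^2$. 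Finally, writing $\mu_n(m)=\sqrt{m^2+\mu_n^{\mathsf{Dir}}+o(1)}=m\sqrt{1+(\mu_n^{\mathsf{Dir}}+o(1))/m^2}=m+\tfrac{1}{2m}\mu_n^{\mathsf{Dir}}+o(1/m)$ gives the result.

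The main obstacle is the lower bound: unlike the Dirichlet case there is no containment of form domains in one direction, so one cannot directly compare the two quadratic forms on the same space. The subtlety is that $\D(H)$-functions need not vanish on $\Gamma$, and controlling the boundary trace requires genuinely using the positivity of the $m\|\psi\|^2_{L^2(\pa\Omega)}$ term together with a trace inequality whose constant must be tracked carefully against the $1/m$ factor; getting the error to be truly $o(1)$ in the squared eigenvalues (hence $o(1/m)$ after the square root) rather than merely $O(1/m)$ or $O(m^{-1/2})$ will require a slightly delicate iteration or a clean abstract lemma on eigenvalue convergence for forms of the type $q_m(\psi)=\|\nabla\psi\|^2+\text{(bdry)}+m\|\psi\|^2_{L^2(\pa\Omega)}$ as $m\to+\infty$.
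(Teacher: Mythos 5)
Your overall skeleton coincides with the paper's: pass to $H^2$, use \eqref{eq.square1}--\eqref{eq.square2} to write $\|H\psi\|^2-m^2\|\psi\|^2$ as the form $\|\nabla\psi\|^2+\int_{\Gamma}(m+\tfrac{\kappa}{2})|\psi|^2\dx\Gamma$ on $H^1$ functions satisfying the MIT condition, get the upper bound $\mu_n(m)^2\le m^2+\mu_n^{\mathsf{Dir}}$ by testing with spinor multiples of Dirichlet eigenfunctions (which do lie in the form domain since their trace vanishes), derive the a priori bounds $\|\nabla\psi\|=O(1)$ and $\|\psi\|^2_{L^2(\Gamma)}=O(1/m)$ on the low-energy subspaces from the sign of the $m\|\psi\|^2_{L^2(\Gamma)}$ term, and finish by expanding the square root; all of this is correct and is what the paper does. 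The gap is in the lower bound, which is the crux, and your key sentence --- that absorbing the boundary term via $\|\psi\|^2_{L^2(\Gamma)}\le\eps\|\nabla\psi\|^2+C_\eps\|\psi\|^2$ and ``feeding this back into min-max yields $\mu_n(m)^2\ge m^2+\mu_n^{\mathsf{Dir}}-o(1)$'' --- is a non sequitur. Knowing $Q_m(\psi)\ge\|\nabla\psi\|^2-o(1)\|\psi\|^2$ on an $n$-dimensional subspace of $H^1(\Omega)$ only lets the min-max principle compare with the \emph{Neumann} (or free $H^1$) eigenvalues of the gradient form, which are strictly below the Dirichlet ones; the Dirichlet boundary condition never reappears from a trace interpolation inequality. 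The whole difficulty, which you correctly flag but do not resolve, is to convert the smallness $\|\psi\|^2_{L^2(\Gamma)}=O(1/m)$ into proximity of the low-lying spectral subspaces to $H^1_0(\Omega)$; your fallback phrases (``small perturbation of the Dirichlet form'', ``Birman--Schwinger-type observation'', an unnamed abstract lemma) do not supply this step, and ``small perturbation'' is not even well defined here since the two forms do not live on a common form domain.

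The paper closes exactly this gap with a soft compactness argument, which works because only $o(1)$ (no rate) is needed for the squared eigenvalues: the eigenfunctions $\psi_{n,m}$ associated with $\lambda_n(Q_m)$ are bounded in $H^1(\Omega)$, hence along a subsequence converge weakly in $H^1$ and strongly in $L^2$; since $\int_\Gamma|\psi_{n,m}|^2\dx\Gamma=O(m^{-1})$ the weak limit has vanishing trace and so belongs to $H^1_0(\Omega)$; weak lower semicontinuity of $\|\nabla\cdot\|^2$ then gives $\liminf\lambda_n(Q_m)\ge\mu_n^{\mathsf{Dir}}$, the higher eigenvalues being handled by induction, with orthogonality to the previously obtained limit eigenfunctions preserved thanks to the strong $L^2$ convergence. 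If you insist on a quantitative version of your approach, you would have to construct from each low-lying $\psi$ a genuine $H^1_0$ competitor (for instance a cutoff at a scale $\delta$ in a collar of $\Gamma$ together with a boundary-layer correction, tracking the errors against $\|\psi\|^2_{L^2(\Gamma)}=O(1/m)$ and $\|\nabla\psi\|=O(1)$); that is an additional construction, not a consequence of the inequalities you state, and it is precisely what your write-up leaves missing.
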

It is actually possible to describe the next term in the expansion of the first positive eigenvalue.
\begin{theorem}\label{theo.positive2}
	Let $u_1\in H^1_0(\Omega,\C)$ be a $L^2$-normalized eigenfunction of the Dirichlet Laplacian associated with its lowest eigenvalue $\mu_{1}^\mathsf{Dir}$. 
	We have
	\[
		\mu_{1}(m)-\left(m+\frac{1}{2m}\mu^\mathsf{Dir}_{1} -\frac{1}{2m^2}\int_{\Gamma}|\pa_\n u_1|^2\dx\Gamma \right)\underset{m\to+\infty}{=}o\left(\frac{1}{m^2}\right).
	\]
\end{theorem}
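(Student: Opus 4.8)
\emph{Strategy.} The plan is to reduce the statement to a sharp two–term asymptotics of $\mu_1(m)^2-m^2$, and then take a square root. By Theorem~\ref{theo:selfadj}, $\mu_1(m)^2=\min\mathsf{sp}(H^2)$, and for every $\psi\in\D(H)$ a combination of \eqref{eq.square1} and \eqref{eq.square2} gives
\[
\|H\psi\|_{L^2(\Omega)}^2-m^2\|\psi\|_{L^2(\Omega)}^2=\|\nabla\psi\|_{L^2(\Omega)}^2+m\|\psi\|_{L^2(\Gamma)}^2+\tfrac12\int_{\Gamma}\kappa\,|\psi|^2\dx s .
\]
Since $\D(H)=\D(|H|)$ is the form domain of $H^2$, the min–max principle identifies $\Lambda_1(m):=\mu_1(m)^2-m^2$ with the infimum of the right–hand side over $\psi\in\D(H)$ with $\|\psi\|_{L^2(\Omega)}=1$: a Robin–type quadratic form with boundary coupling of order $m$, whose bottom already converges to $\mu^{\mathsf{Dir}}_1$ by Theorem~\ref{theo.positive}. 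The objective becomes $\Lambda_1(m)=\mu^{\mathsf{Dir}}_1-\tfrac1m\int_\Gamma|\pa_\n u_1|^2+o(1/m)$; then $\mu_1(m)=\sqrt{m^2+\Lambda_1(m)}=m+\tfrac{\Lambda_1(m)}{2m}+O(m^{-3})$ and a Taylor expansion yields the claim.

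\emph{Upper bound.} I build a quasimode lying \emph{exactly} in $\D(H)$. Writing $H=\left(\begin{smallmatrix}m&\sigma\cdot D\\ \sigma\cdot D&-m\end{smallmatrix}\right)$ in $2\times2$ blocks and fixing a unit spinor $w\in\C^2$, I look for $\psi_m=(\phi_m,\chi_m)$ with $\phi_m=u_1\,w+\tfrac1m\rho_m$ and $\chi_m=\tfrac1{2m}(\sigma\cdot D)(u_1 w)$, adding a $1/m^2$–corrector if the precision $o(m^{-2})$ requires it. The condition $\mathcal B\psi_m=\psi_m$ on $\Gamma$ is equivalent to $\chi_m|_\Gamma=i(\sigma\cdot\n)\phi_m|_\Gamma$; since $u_1$ vanishes on $\Gamma$, this prescribes the trace of $\rho_m$ on $\Gamma$ (a fixed multiple of $(\pa_\n u_1)w$), after which any $H^1$–extension with that trace does the job — the resulting Rayleigh quotient does not depend on the choice. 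Inserting $\psi_m$ and integrating by parts, using $-\Delta u_1=\mu^{\mathsf{Dir}}_1 u_1$, $(\sigma\cdot D)^2=-\Delta$, $\nabla u_1|_\Gamma=(\pa_\n u_1)\n$ and \eqref{eq.square2}, the $m^2$– and $m$–order contributions cancel against the normalisation, leaving the desired upper bound on $\Lambda_1(m)$.

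\emph{Lower bound.} For $\psi=(\phi,\chi)\in\D(H)$ the relation $\chi|_\Gamma=i(\sigma\cdot\n)\phi|_\Gamma$ couples the traces of the two half–spinors (in particular $|\chi|=|\phi|$ on $\Gamma$); after controlling $\|\chi\|_{L^2(\Omega)}$ by the energy of $\psi$, this bounds the Rayleigh quotient defining $\Lambda_1(m)$ from below by that of the \emph{scalar} Robin Laplacian on $\Omega$ with coupling constant proportional to $m$. The large–coupling expansion $\lambda_1^{\mathrm{Robin}}(\beta)=\mu^{\mathsf{Dir}}_1-\tfrac1\beta\int_\Gamma|\pa_\n u_1|^2+o(1/\beta)$ then closes the gap; this classical fact may be quoted, or re–proved by the usual two–sided argument (trial function $u_1+\beta^{-1}h$, and, for the matching bound, a splitting $\psi=\psi_0+\psi_\Gamma$ into an $H^1_0$–part and a thin boundary–layer part). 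Combining the two bounds gives the asymptotics of $\Lambda_1(m)$, and $\mu_1(m)=\sqrt{m^2+\Lambda_1(m)}$ finishes the proof.

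The genuine difficulty is the upper bound: the quasimode must satisfy the MIT condition \emph{exactly} — a pointwise spinorial constraint on $\Gamma$, not a scalar one — so the corrector $\rho_m$, and at the next order its own corrector, have to be chosen with prescribed traces while every remainder stays $o(m^{-2})$; this, together with the careful accounting of how the two–component structure of $\D(H)$ feeds into the effective boundary coupling, is where the real work lies.
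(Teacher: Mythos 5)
Your overall reduction is exactly the paper's: by Theorem \ref{theo:selfadj} and \eqref{eq.square1}--\eqref{eq.square2}, $\Lambda_1(m):=\mu_1(m)^2-m^2$ is the bottom of the form $Q_m(\psi)=\|\nabla\psi\|^2+\int_\Gamma\bigl(m+\tfrac{\kappa}{2}\bigr)|\psi|^2\dx\Gamma$ on $\{\psi\in H^1(\Omega,\C^4):\mathcal B\psi=\psi\text{ on }\Gamma\}$, followed by expanding $\sqrt{m^2+\Lambda_1(m)}$. The genuine gap is quantitative, and it is precisely the point of this theorem: your announced expansion $\Lambda_1(m)=\mu_1^{\mathsf{Dir}}-\tfrac1m\int_\Gamma|\pa_\n u_1|^2+o(1/m)$ is not what your own construction gives. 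With $\phi_m=u_1w+m^{-1}\rho_m$ and $\chi_m=\tfrac1{2m}(\sigma\cdot D)(u_1w)$, the MIT condition forces $\rho_m|_\Gamma=-\tfrac12(\pa_\n u_1)w$ (only \emph{half} of the normal derivative is available, namely its projection onto $\ker(\mathcal B-1_4)$), while the boundary penalty $m\int_\Gamma|\psi_m|^2$ receives equal contributions from both spinor components since $|\chi_m|=|\phi_m|$ on $\Gamma$. Carrying out the computation: the cross term contributes $-\tfrac1m\int_\Gamma|\pa_\n u_1|^2$, the boundary penalty contributes $+\bigl(\tfrac14+\tfrac14\bigr)\tfrac1m\int_\Gamma|\pa_\n u_1|^2$, so the net order-$1/m$ term is $-\tfrac1{2m}\int_\Gamma|\pa_\n u_1|^2$, not $-\tfrac1m\int_\Gamma|\pa_\n u_1|^2$. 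This is what the paper proves in Section \ref{sec.3}: the correction is $-\|P_+\pa_\n\psi_{1,\infty}\|^2_{L^2(\Gamma)}$, which by Lemma \ref{lem.norm-der-n} equals $\tfrac12\int_\Gamma|\pa_\n u_1|^2$ --- half of the scalar Robin value (see the Remark comparing with the Robin Laplacian). Your lower bound hides the same factor in the phrase \enquote{coupling constant proportional to $m$}: the identity $|\chi|=|\phi|$ on $\Gamma$ turns $m\int_\Gamma(|\phi|^2+|\chi|^2)$ into a Robin term of coupling $2m$ for $\phi$, so the comparison Robin expansion again yields $-\tfrac1{2m}\int_\Gamma|\pa_\n u_1|^2$; the two bounds you propose cannot meet at the constant you announce.

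Consequently, a correct execution of your scheme gives $\mu_1(m)=m+\tfrac{\mu_1^{\mathsf{Dir}}}{2m}-\tfrac1{4m^2}\int_\Gamma|\pa_\n u_1|^2+o(m^{-2})$, which is what the paper's Proposition in Section \ref{sec.3} (combined with the square-root expansion, and consistent with the Remark on $\sqrt{m^2-\Delta^{\mathsf{Rob}}_{2m}}$) actually establishes; it differs by a factor $2$ at order $m^{-2}$ from the coefficient displayed in the statement, and your bookkeeping, tuned to reproduce $\tfrac1{2m^2}$, silently substitutes the unconstrained scalar-Robin constant where the spinorial MIT constraint gives half of it. Two further points where the argument is thinner than it should be: for the lower bound, \enquote{controlling $\|\chi\|_{L^2(\Omega)}$ by the energy} is not enough --- one needs the eigenvalue equation $(\sigma\cdot D)\phi=(\mu_1(m)+m)\chi$ (or the paper's compactness argument in Step (c)) to get $\|\chi\|_{L^2(\Omega)}=\mathcal O(m^{-1})$ so that the normalization is only perturbed at order $m^{-2}$; and if one does not reduce componentwise as you do, the independence of the first-order correction on the spinor direction of the limiting Dirichlet eigenfunction is not automatic --- this is exactly where the paper needs Lemma \ref{lem.norm-der-n}, i.e.\ $\int_\Gamma|\pa_\n u_1|^2\,\n\dx\Gamma=0$.
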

\begin{remark}
	This asymptotic expansion of $\mu_1(m)$ coincides with the one of the first eigenvalue of the operator $\sqrt{m^2 -\Delta^\mathsf{Rob}_{2m}}$ where $-\Delta^\mathsf{Rob}_{2m}$  is the Robin Laplacian of mass $2m$, \emph{i.e.} the operator of $L^2(\Omega,\C)$ whose quadratic form is defined for $u\in H^1(\Omega,\C)$ by 
	\[
		u\longmapsto\int_\Omega |\nabla u|^2\dx\x + 2m\int_\Gamma |u|^2 \dx\Gamma .
	\]
\end{remark}
\subsubsection{The MIT bag model with negative mass}\label{sec:negativemass}
Let us now describe our result related to the MIT bag model with \enquote{negative mass}. This \enquote{negative mass} may be understood in two equivalent ways.
\begin{enumerate}[\rm i.]
\item When we investigate the case $\Omega=\R^3$, the Dirac operators $\alpha\cdot D+m\beta$ and $\alpha\cdot D-m\beta$ are unitarily equivalent. Thus, in the case of a general $\Omega$, one may be tempted to consider $\alpha\cdot D-m\beta$ with the MIT bag condition $\mathcal{B}$.
\item Since we have 
\[
	\gamma_5\left(\alpha\cdot D + m\beta\right)\gamma_5 = \alpha\cdot D - m\beta\,,\qquad \gamma_5\mathcal{B}\gamma_5 = -\mathcal{B}\,,
\]
we notice that $\alpha\cdot D-m\beta$ with boundary condition $\mathcal{B}$ is unitarily equivalent to $\alpha\cdot D+m\beta$ with boundary condition $-\mathcal{B}$. In this case, the flux $-i\n\cdot \mathbf{j}$ also vanishes at the boundary and the justification given by the physicists \cite{johnson} of the MIT bag boundary condition can also be applied here (see Remark \ref{rk:boundfllux}).
\end{enumerate}
Of course, these changes of signs have no effect on the self-adjointness. 

\begin{remark}
	From the physical point of view, the fact that $H^\Omega_0$ does not commutes with the chirality matrix $\gamma_5$ (see \cite{Thaller1992}) is called the chiral symmetry violation \cite{Thomas1984,hosaka2001quarks}.  The chiral symmetry is supposed to be a property approximately satisfied by light quarks and exactly satisfied for quarks of mass $0$.
\end{remark}

In this paper, we will show that the limit $m\to-\infty$ for the operator $H_{m}^\Omega$ turns out to be a \emph{semiclassical limit} and not of perturbative nature as when $m\to+\infty$. It will be shown that the boundary is attractive for the eigenfunctions with eigenvalues lying essentially in the Dirac gap $[-|m|,|m|]$ and that their distribution is governed by the operator
\begin{equation}\label{eq.eff-}
\mathcal{L}^{\Gamma}-\frac{\kappa^2}{4}+K\,,
\end{equation}
where $\kappa$ and $K$ are the trace and the determinant of the Weingarten map, respectively, and where $\mathcal{L}^\Gamma$ is defined as follows.
\begin{definition}
The operator $(\mathcal{L}^\Gamma,\mathcal{D}(\mathcal{L}^\Gamma))$ is the operator associated with the quadratic form
\[\mathcal{Q}^\Gamma(\psi)=\int_{\Gamma}\|\nabla_{s}\psi\|^2\dx\Gamma\,,\quad\forall\psi\in H^1(\Gamma,\C)^4\cap\ker(\mathcal{B}-1_{4})\,.\]
\end{definition}
As a consequence of our investigation, we will get the following lower bound of the quadratic form $\mathcal{Q}^\Gamma$.
\begin{proposition}\label{prop.lbkK}
We have
\[\forall\psi\in H^1(\Gamma,\C)^4\cap\ker(\mathcal{B}-1_{4})\,,\quad \mathcal{Q}^\Gamma(\psi)\geq\int_{\Gamma}\left(\frac{\kappa^2}{4}-K\right)|\psi|^2\dx\Gamma\,.\]
\end{proposition}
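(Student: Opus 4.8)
The plan is to turn $\mathcal Q^\Gamma$ into (the square of a boundary Dirac operator) plus the asserted curvature term. Write $\Gamma=\pa\Omega$, a smooth closed surface, and set $P_\pm=\tfrac12(1_4\pm\mathcal B)$; these are smooth, orthogonal, rank-two projections on $\Gamma$, so $E:=\ker(\mathcal B-1_4)$ is a rank-two Hermitian subbundle of $\Gamma\times\C^4$, which I endow with the connection $\nabla^E:=P_+\nabla_s$ obtained by projecting the flat ambient derivative. The algebraic facts I will use are: for a vector $e$ tangent to $\Gamma$, $\alpha\cdot e$ anticommutes with both $\beta$ and $\alpha\cdot\n$, hence commutes with $\mathcal B$ and preserves $E$; while $\alpha\cdot\n$ and $\beta$ anticommute with $\mathcal B$, hence map $E$ onto $E^\perp$; and, for an orthonormal frame $(e_1,e_2)$ of $T\Gamma$, the Hermitian matrices $\alpha\cdot e_1,\alpha\cdot e_2$ satisfy $(\alpha\cdot e_i)(\alpha\cdot e_j)+(\alpha\cdot e_j)(\alpha\cdot e_i)=2\delta_{ij}$, so that $(E,\nabla^E,\alpha\cdot)$ is a Dirac bundle over $\Gamma$. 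By density it suffices to treat $\psi\in C^\infty(\Gamma,\C^4)\cap\ker(\mathcal B-1_4)$.

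\textbf{Step 1.} Differentiating the constraint $\mathcal B\psi=\psi$ along a tangent vector $v$ gives $(\mathcal B-1_4)\pa_v\psi=-(\pa_v\mathcal B)\psi=i\beta\bigl(\alpha\cdot d\n_s(v)\bigr)\psi$, so the part of $\pa_v\psi$ orthogonal to $E$ is the explicit curvature term $P_-\pa_v\psi=-\tfrac i2\beta\bigl(\alpha\cdot d\n_s(v)\bigr)\psi$, of pointwise norm $\tfrac12|d\n_s(v)|\,|\psi|$. Choosing $(e_1,e_2)$ to diagonalize $d\n_s$, with eigenvalues $\kappa_1,\kappa_2$, a fibrewise Pythagoras identity together with $\kappa_1^2+\kappa_2^2=\kappa^2-2K$ yield
\[\mathcal Q^\Gamma(\psi)=\|\nabla^E\psi\|_{L^2(\Gamma)}^2+\int_\Gamma\Bigl(\tfrac{\kappa^2}{4}-\tfrac K2\Bigr)|\psi|^2\dx\Gamma .\]

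\textbf{Step 2.} It remains to show $\|\nabla^E\psi\|_{L^2(\Gamma)}^2\geq-\tfrac12\int_\Gamma K|\psi|^2\dx\Gamma$, i.e. $(\nabla^E)^*\nabla^E\geq-\tfrac K2$ on $E$. Consider $\mathcal D^\Gamma:=\sum_j(\alpha\cdot e_j)\nabla^E_{e_j}$, which is valued in $E$; using the Gauss formula together with the fact that $P_+(\alpha\cdot\n)=0$ on $E$ one checks that $\nabla^E$ is a Clifford connection, so $\mathcal D^\Gamma$ is formally self-adjoint on the closed surface $\Gamma$ and $(\mathcal D^\Gamma)^2\geq0$. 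The Lichnerowicz--Weitzenb\"ock formula then reads $(\mathcal D^\Gamma)^2=(\nabla^E)^*\nabla^E+\tfrac14\,\mathrm{Scal}_\Gamma+\mathfrak c(\mathcal F^{E/S})$, the last term being the Clifford contraction of the twisting curvature of $E$. Since $\R^3$ is flat, the curvature of $\nabla^E$ comes entirely from the second fundamental form: $\Omega^{\nabla^E}(e_1,e_2)=P_+(\pa_{e_1}P_+)P_-(\pa_{e_2}P_+)-P_+(\pa_{e_2}P_+)P_-(\pa_{e_1}P_+)=\tfrac14\,[\alpha\cdot d\n_s(e_1),\,\alpha\cdot d\n_s(e_2)]=\tfrac K2\,(\alpha\cdot e_1)(\alpha\cdot e_2)$ on $E$, which is exactly the spin-connection curvature of a surface of Gauss curvature $K$; hence $\mathcal F^{E/S}=0$ and, with $\mathrm{Scal}_\Gamma=2K$, $(\nabla^E)^*\nabla^E=(\mathcal D^\Gamma)^2-\tfrac K2\geq-\tfrac K2$. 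Substituting into Step 1 gives $\mathcal Q^\Gamma(\psi)\geq\int_\Gamma\bigl(\tfrac{\kappa^2}{4}-K\bigr)|\psi|^2\dx\Gamma$.

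\textbf{Main obstacle.} Apart from Step 2 everything is matrix algebra and Pythagoras; the positivity $(\nabla^E)^*\nabla^E+\tfrac K2\geq0$ is the heart of the matter. If one prefers to avoid spin geometry, one can instead prove directly, by integration by parts on $\Gamma$ (mimicking the proof of \eqref{eq.square2}, the Gauss curvature emerging when the two tangential derivatives are commuted), the surface identity $\|\alpha\cdot\nabla_s\psi\|_{L^2(\Gamma)}^2=\|\nabla_s\psi\|_{L^2(\Gamma)}^2+\int_\Gamma K|\psi|^2\dx\Gamma$ for all $\psi\in H^1(\Gamma,\C^4)$, and combine it with the pointwise splitting $\alpha\cdot\nabla_s\psi=\Phi+\tfrac i2\kappa\beta\psi$, $\Phi\in E$, $\tfrac i2\kappa\beta\psi\in E^\perp$, valid under the MIT constraint (using $\sum_j\kappa_j(\alpha\cdot e_j)\beta(\alpha\cdot e_j)=-\kappa\beta$), whence $\mathcal Q^\Gamma(\psi)=\|\alpha\cdot\nabla_s\psi\|_{L^2(\Gamma)}^2-\int_\Gamma K|\psi|^2\dx\Gamma=\|\Phi\|^2_{L^2(\Gamma)}+\int_\Gamma\bigl(\tfrac{\kappa^2}{4}-K\bigr)|\psi|^2\dx\Gamma\geq\int_\Gamma\bigl(\tfrac{\kappa^2}{4}-K\bigr)|\psi|^2\dx\Gamma$. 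Either way the geometric input --- Gauss's Theorema Egregium, in the guise of the $\int_\Gamma K|\psi|^2$ term --- is unavoidable, and getting its constant and sign right is the one place where care is needed.
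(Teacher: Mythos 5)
Your main argument (Steps 1--2) is correct and follows a genuinely different route from the paper. The paper never proves Proposition \ref{prop.lbkK} by a direct computation on $\Gamma$: it obtains it as a by-product of the semiclassical analysis, since $(H^\Omega_{-m})^2\geq 0$ forces the rescaled eigenvalues to stay above $-1$, while Theorem \ref{thm:approx2} sandwiches them by the eigenvalues of $-1+\hbar^4\bigl[(1\pm C\hbar)\mathcal{L}^\Gamma-\tfrac{\kappa^2}{4}+K\pm C\hbar\bigr]$; letting $\hbar\to0$ yields the non-negativity of $\mathcal{L}^\Gamma-\tfrac{\kappa^2}{4}+K$. Your proof is intrinsic and short: differentiating the constraint gives the exact splitting $\mathcal{Q}^\Gamma(\psi)=\|P_+\nabla_s\psi\|^2+\int_\Gamma\bigl(\tfrac{\kappa^2}{4}-\tfrac K2\bigr)|\psi|^2\dx\Gamma$, and the remaining $-\tfrac K2$ comes from the curvature $\Omega^{\nabla^E}(e_1,e_2)=\tfrac K2(\alpha\cdot e_1)(\alpha\cdot e_2)$ of the projected connection via a Weitzenb\"ock identity. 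This makes the constant $\tfrac{\kappa^2}{4}-K$ transparent and gives the lower bound independently of the long asymptotic machinery (which, conversely, shows the bound is essentially optimal, e.g.\ equality of the constants on a sphere). Note also that you do not need the spin-theoretic packaging ($\mathrm{Scal}_\Gamma/4$ plus vanishing twisting curvature): the direct computation of $\Omega^{\nabla^E}$ and its Clifford contraction already closes the argument.

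Two corrections are needed. First, with Hermitian Clifford matrices satisfying $(\alpha\cdot e_j)^2=1_4$, the operator $\sum_j(\alpha\cdot e_j)\nabla^E_{e_j}$ is formally \emph{skew}-adjoint on the closed surface $\Gamma$ (think of $\partial_x$ on $\R$), so as written ``$\mathcal D^\Gamma$ is formally self-adjoint and $(\mathcal D^\Gamma)^2\geq0$'' is false, and so is the displayed identity $(\nabla^E)^*\nabla^E=(\mathcal D^\Gamma)^2-\tfrac K2$. The fix is harmless: set $\mathcal D^\Gamma:=-i\sum_j(\alpha\cdot e_j)\nabla^E_{e_j}$ (or argue with $\|\sum_j(\alpha\cdot e_j)\nabla^E_{e_j}\psi\|^2\geq0$ directly); then the Weitzenb\"ock computation gives $(\mathcal D^\Gamma)^2=(\nabla^E)^*\nabla^E+\tfrac K2$, hence $(\nabla^E)^*\nabla^E\geq-\tfrac K2$ and your conclusion stands. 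Since this is exactly the spot you flag as ``the one place where care is needed,'' the sign bookkeeping should be written out. Second, the ``elementary alternative'' in your last paragraph is wrong as stated: the identity $\|\alpha\cdot\nabla_s\psi\|^2_{L^2(\Gamma)}=\|\nabla_s\psi\|^2_{L^2(\Gamma)}+\int_\Gamma K|\psi|^2\dx\Gamma$ cannot hold for all $\psi\in H^1(\Gamma,\C^4)$ --- take $\psi$ constant on a sphere: both gradient terms vanish while $\int_\Gamma K|\psi|^2>0$. (The ambient bundle $\Gamma\times\C^4$ with the componentwise derivative is flat, so no Gauss curvature can appear for unconstrained $\psi$.) The identity is true for $\psi\in\ker(\mathcal B-1_4)$, but there it is equivalent to your Step 2 --- the $K$ term enters through differentiating $\mathcal B$, not through commuting tangential derivatives --- so it is not a way around the geometric input.
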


Taking advantage of semiclassical technics, we will establish the following \emph{uniform} eigenvalues estimate.
\begin{theorem}\label{theo.bis}
Let $\eps_{0}\in(0,1)$ and
\[\mathsf{N}_{\eps_{0},m}:=\{n\in\N^* : \mu_{n}(-m)\leq m\sqrt{1-\eps_{0}}\}\,.\]
There exist positive constants $C_{-}$, $C_{+}$, $m_{0}$ such that, for all $m\geq m_{0}$ and $n\in\mathsf{N}_{\eps_{0},m}$,
\[\mu^-_{n}(m)\leq\mu_{n}(-m)\leq\mu^+_{n}(m)\,,\]
with $\mu^\pm_{n}(m)$being the $n$-th eigenvalue of the operators  $\mathcal{L}_{m}^{\Gamma,\pm}$ of $L^2(\Gamma,\C)^4$ defined by
\begin{align*}
\mathcal{L}_{m}^{\Gamma,-}=&\left([1-C_{-}m^{-\frac{1}{2}}]\mathcal{L}^{\Gamma} 
-\frac{\kappa^2}{4}+K-C_{-}m^{-1}\right)_{+}^{\frac{1}{2}}\,,\\
\mathcal{L}_{m}^{\Gamma,+}=&\left([1+C_{+}m^{-\frac{1}{2}}]
\mathcal{L}^{\Gamma} 
-\frac{\kappa^2}{4}+K+C_{+}m^{-1}\right)^{\frac{1}{2}}\,.
\end{align*}
\end{theorem}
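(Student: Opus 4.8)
The plan is to reduce the eigenvalue problem for $H_{-m}^\Omega$ inside the Dirac gap to an effective problem on the boundary $\Gamma$, via a semiclassical analysis with effective semiclassical parameter $h\sim m^{-1}$. First I would square the operator: since $\mu_n(-m)$ are the positive eigenvalues of $H_{-m}^\Omega$, the quantities $\mu_n(-m)^2$ are the eigenvalues of $(H_{-m}^\Omega)^2$, and by the analogue of \eqref{eq.square1}--\eqref{eq.square2} (with $m$ replaced by $-m$) one has, for $\psi\in\D(H)$,
\[
\|H_{-m}^\Omega\psi\|^2_{L^2(\Omega)} = \|\nabla\psi\|^2_{L^2(\Omega)} + \tfrac12\int_{\Gamma}\kappa|\psi|^2\dx s - m\|\psi\|^2_{L^2(\Gamma)} + m^2\|\psi\|^2_{L^2(\Omega)}.
\]
The term $-m\|\psi\|^2_{L^2(\Gamma)}$ is a strongly attractive surface term, so the eigenfunctions with $\mu_n(-m)^2\le m^2(1-\eps_0)$ concentrate in a tubular neighborhood of $\Gamma$ of width $\sim m^{-1}$. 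The strategy is then the standard one for Robin-type semiclassical problems: introduce tubular (normal) coordinates $(s,t)$ with $s\in\Gamma$ and $t\in(0,\delta)$ the distance to the boundary, rescale $t = \tau/m$, and perform a Taylor expansion of the metric in powers of $t$, using that the first-order correction involves the Weingarten map (hence $\kappa$ and $K$). After conjugating by the change of variables and the natural weight, the operator splits, up to $O(m^{-1/2})$ errors in the form sense, as a transverse harmonic-type operator in $\tau$ — with ground state energy producing the $m^2$ cancellation and forcing, at leading order, the transverse profile $e^{-m t}$ times a boundary spinor in $\ker(\mathcal{B}-1_4)$ — tensored with the tangential operator $\mathcal{L}^\Gamma - \tfrac{\kappa^2}{4} + K$ acting on that boundary spinor. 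This is exactly the effective operator \eqref{eq.eff-}.

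The core of the argument is a two-sided comparison of quadratic forms, carried out with the min-max principle. For the upper bound $\mu_n(-m)\le\mu_n^+(m)$: given the first $n$ eigenfunctions of $\mathcal{L}^\Gamma$ (hence of $\mathcal{L}_m^{\Gamma,+}$), build quasimodes in $\D(H)$ by multiplying each by the transverse profile $\chi(t)e^{-mt}$ (with $\chi$ a cutoff), evaluate the Rayleigh quotient of $(H_{-m}^\Omega)^2$ using the expansion above, and check that the cross terms and the $t$-dependent corrections to the metric produce exactly the operator $[1+C_+m^{-1/2}]\mathcal{L}^\Gamma - \tfrac{\kappa^2}{4}+K + C_+ m^{-1}$ as an upper symbol, uniformly over the relevant range of $n$. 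For the lower bound $\mu_n^-(m)\le\mu_n(-m)$: take any $\psi$ in the span of the first $n$ eigenfunctions of $(H_{-m}^\Omega)^2$, use an a priori Agmon-type estimate to show it is exponentially localized near $\Gamma$ (so the cutoff $\chi$ costs only an exponentially small error), change to tubular coordinates, project onto the transverse ground state $e^{-mt}$, and bound the error (the "excited transverse modes" part) by a spectral gap of order $m^2$ against the quadratic form. What remains after projection is bounded below by $([1-C_-m^{-1/2}]\mathcal{L}^\Gamma - \tfrac{\kappa^2}{4}+K - C_- m^{-1})_+$, and min-max concludes. Finally, taking square roots (the operators are positive on the spectral subspace in question, whence the $(\cdot)_+^{1/2}$) transfers the estimate from $(H_{-m}^\Omega)^2$ to $H_{-m}^\Omega$, giving the stated two-sided bound with $\mu_n^\pm(m)$ the $n$-th eigenvalue of $\mathcal{L}_m^{\Gamma,\pm}$.

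The main obstacle I expect is making all the estimates \emph{uniform} in $n$ over the set $\mathsf{N}_{\eps_0,m}$, which is the whole point of the theorem. The naive quasimode construction and the naive transverse projection each lose powers of the relevant eigenvalue, so one must track carefully how the error constants depend on $\mu_n(-m)$; the constraint $\mu_n(-m)\le m\sqrt{1-\eps_0}$ is what keeps these under control (it bounds the tangential energy $\mathcal{Q}^\Gamma$ of the relevant states by $O(m^2)$, hence controls the size of tangential derivatives hitting the $t$-dependent metric corrections). A secondary technical point is handling the matrix structure: the boundary projection onto $\ker(\mathcal{B}-1_4)$ and the commutation of $\alpha\cdot\nabla$ with the tubular frame produce zeroth-order matrix-valued terms whose trace over the boundary spinor must be identified with $-\tfrac{\kappa^2}{4}+K$ — this is a pointwise computation on $\Gamma$ using the Pauli matrices and the principal curvatures, and it is also where Proposition \ref{prop.lbkK} enters to guarantee $\mathcal{L}^\Gamma - \tfrac{\kappa^2}{4}+K\ge0$ so that the square roots are well-defined.
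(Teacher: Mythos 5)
Your overall architecture (squaring the operator, Agmon localization to a tubular neighborhood of $\Gamma$, rescaling $t=\tau/m$, transverse/tangential splitting, two-sided comparison of quadratic forms via the min-max principle, then taking square roots at the end) is the same as the paper's. The genuine gap lies in your transverse profile and in where you claim the potential $-\frac{\kappa^2}{4}+K$ comes from. After squaring, the operator is scalar, since $(\alpha\cdot D)^2=D^2 1_4$; the matrix structure survives only in the boundary condition, which after a pointwise diagonalization (conjugation by $P_{\n}$) becomes a Robin condition $\left(\partial_\tau+1-\frac{\kappa\hbar^2}{2}\right)u(0)=0$ on two components and Dirichlet on the other two, with $\hbar=m^{-1/2}$. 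The curvature potential is then produced by \emph{second-order perturbation theory} for the one-dimensional weighted Robin problem on the half-line with weight $a=1-\hbar^2\kappa\tau+\hbar^4K\tau^2$: its ground state energy is $-1+\hbar^4\left(K-\frac{\kappa^2}{4}\right)+\mathcal{O}(\hbar^6)$, and the $-\frac{\kappa^2}{4}$ arises from the first-order corrector $\psi_1=\frac{\kappa\tau}{\sqrt 2}e^{-\tau}$ (Proposition \ref{prop.approx1D}), not from a pointwise trace of matrix-valued commutator terms as you suggest. Concretely, your quasimode $f(s)\chi(t)e^{-mt}$ fails at exactly the critical order: the transverse Rayleigh quotient of $\psi_0=\sqrt2 e^{-\tau}$, including the weight and the $\frac{\kappa}{2}$ boundary term, is $\bigl(-1+\frac{\hbar^2\kappa}{2}+\frac{\hbar^4K}{2}\bigr)\big/\bigl(1-\frac{\hbar^2\kappa}{2}+\frac{\hbar^4K}{2}\bigr)=-1+\hbar^4K+\mathcal{O}(\hbar^6)$, so the $-\frac{\kappa^2}{4}$ is simply absent and your upper bound would yield the effective potential $K$ instead of $K-\frac{\kappa^2}{4}$, which is strictly worse whenever $\kappa\neq 0$ and does not prove the theorem.

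The same defect affects your lower bound: projecting onto $e^{-mt}$ and discarding the excited transverse modes via the gap does not work as stated, because this is not a spectral projection of the transverse operator; the cross terms between the range and its orthogonal complement are of size $\mathcal{O}(\hbar^2)$ per unit norm, and balancing them against the $\mathcal{O}(1)$ (rescaled) gap produces errors of order $\hbar^4$ — precisely the order of the term you want to capture. The paper circumvents both issues by projecting onto the $s$-dependent exact transverse ground state $v_\hbar(s,\cdot)$ (Propositions \ref{prop.approx1D} and \ref{prop:specfirstorders}), for which the transverse form splits exactly, and by controlling the resulting Born-Oppenheimer correction $\|\nabla_s v_\hbar\|^2=\mathcal{O}(\hbar^4)$ together with the commutator $[\nabla_s,\Pi_\hbar]$ (Remarks \ref{rem.corBO} and \ref{rem.comPi}, Lemma \ref{lem:boundtens}); the uniformity in $n$ that worries you then comes from this exact splitting plus the transverse spectral gap $\lambda_2\geq-\eps_0/2$, not from a bound on the tangential energy of the eigenfunctions. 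To repair your argument you would at least need to replace $e^{-mt}$ by the corrected profile $\psi_0+\hbar^2\psi_1$ (or by $v_\hbar(s,\cdot)$ itself) both in the quasimodes and in the projection.
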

\begin{remark}
By Proposition \ref{prop.lbkK}, 
\[[1+C_{+}m^{-\frac{1}{2}}]\mathcal{L}^{\Gamma} -\frac{\kappa^2}{4}+K+C_{+}m^{-1}\geq 0\,,\]
so that the square root is well-defined. In the expression of $\mathcal{L}_{m}^{\Gamma,-}$, we are obliged to take the non-negative part.
\end{remark}
Rewriting the previous theorem in term of asymptotic expansions of the eigenvalues, we get the following result (see for instance \cite[Corollary 3.2]{KKR16}).
\begin{corollary}
	For all $n\in \N^*$, we have that
	\[
		\mu_n(-m) \underset{m\to+\infty}{=} \widetilde{\mu}^{\frac{1}{2}}_n +\mathcal{O}(m^{-\frac{1}{2}}),
	\]
	where $(\widetilde{\mu}_n)_{n\in \N^*}$ is the non-decreasing sequence of the eigenvalues of the following non-negative operator on $L^2(\Gamma,\C)^4\cap \ker(1_4-\mathcal{B})$: 
	\[\mathcal{L}^{\Gamma}-\frac{\kappa^2}{4}+K\,.\]
\end{corollary}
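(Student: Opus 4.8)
The plan is to obtain the statement from the two-sided bound of Theorem~\ref{theo.bis} by a soft argument based on the min-max principle, the remaining perturbative bookkeeping being exactly of the kind recorded in \cite[Corollary 3.2]{KKR16}. Fix $n\in\N^*$ and write $\mathcal{A}:=\mathcal{L}^{\Gamma}-\frac{\kappa^2}{4}+K$ for the operator whose $n$-th eigenvalue is $\widetilde{\mu}_{n}$, with quadratic form $a(\psi):=\mathcal{Q}^{\Gamma}(\psi)+\int_{\Gamma}\big(-\frac{\kappa^2}{4}+K\big)|\psi|^2\dx\Gamma$ on $H^1(\Gamma,\C)^4\cap\ker(\mathcal{B}-1_{4})$. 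Since $\Gamma$ is compact, $\mathcal{A}$ has compact resolvent, $a\geq 0$ by Proposition~\ref{prop.lbkK}, and $C_{0}:=\big\|\frac{\kappa^2}{4}-K\big\|_{L^{\infty}(\Gamma)}<\infty$.

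First I would check that the fixed index $n$ belongs to $\mathsf{N}_{\eps_{0},m}$ once $m$ is large, so that Theorem~\ref{theo.bis} applies to it. This is a consequence of the upper-bound half of the construction behind Theorem~\ref{theo.bis}: a suitable $n$-dimensional space of quasimodes concentrated in a tubular neighbourhood of $\Gamma$ and governed by $\mathcal{A}$ yields, via the min-max principle, $\mu_{n}(-m)\leq\widetilde{\mu}_{n}^{1/2}+o(1)$; in particular $\mu_{n}(-m)$ stays bounded, while $m\sqrt{1-\eps_{0}}\to+\infty$, so $\mu_{n}(-m)\leq m\sqrt{1-\eps_{0}}$ for $m$ large. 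Theorem~\ref{theo.bis} then gives $\mu^-_{n}(m)\leq\mu_{n}(-m)\leq\mu^+_{n}(m)$ for all such $m$, and the task reduces to expanding $\mu^{\pm}_{n}(m)$.

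Set $\delta_{\pm}=C_{\pm}m^{-1/2}$. By the functional calculus, $\mu^+_{n}(m)=(\nu^+_{n}(m))^{1/2}$ and $\mu^-_{n}(m)=\big((\lambda^-_{n}(m))_{+}\big)^{1/2}$, where $\nu^+_{n}(m)$, resp. $\lambda^-_{n}(m)$, is the $n$-th eigenvalue of $[1\pm\delta_{\pm}]\mathcal{L}^{\Gamma}-\frac{\kappa^2}{4}+K\pm C_{\pm}m^{-1}$. The key observation is the elementary pinching $0\leq\mathcal{Q}^{\Gamma}(\psi)\leq a(\psi)+C_{0}\|\psi\|^{2}$ on the common form domain, which upon comparing quadratic forms gives $\mathcal{A}\leq[1+\delta_{+}]\mathcal{L}^{\Gamma}-\frac{\kappa^2}{4}+K+C_{+}m^{-1}\leq(1+\delta_{+})\mathcal{A}+\delta_{+}C_{0}+C_{+}m^{-1}$ and, for $m$ large, $(1-\delta_{-})\mathcal{A}-\delta_{-}C_{0}-C_{-}m^{-1}\leq[1-\delta_{-}]\mathcal{L}^{\Gamma}-\frac{\kappa^2}{4}+K-C_{-}m^{-1}\leq\mathcal{A}$. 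By the min-max principle this turns into $0\leq\nu^+_{n}(m)-\widetilde{\mu}_{n}=\mathcal{O}(m^{-1/2})$ and $|\lambda^-_{n}(m)-\widetilde{\mu}_{n}|=\mathcal{O}(m^{-1/2})$, and since $\widetilde{\mu}_{n}\geq 0$ the positive part satisfies $(\lambda^-_{n}(m))_{+}=\widetilde{\mu}_{n}+\mathcal{O}(m^{-1/2})$ as well. Hence $\mu_{n}(-m)^{2}$ lies in the interval $\big[(\lambda^-_{n}(m))_{+},\,\nu^+_{n}(m)\big]$, whose endpoints both equal $\widetilde{\mu}_{n}+\mathcal{O}(m^{-1/2})$; taking square roots gives $\mu_{n}(-m)=\widetilde{\mu}_{n}^{1/2}+\mathcal{O}(m^{-1/2})$.

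I do not expect a serious obstacle, the genuine work being already contained in Theorem~\ref{theo.bis}; the two points deserving care are the a priori bound ensuring $n\in\mathsf{N}_{\eps_{0},m}$ for large $m$, and the final square-root step, where the Lipschitz constant of $t\mapsto\sqrt{t}$ degenerates near $0$: the clean rate $\mathcal{O}(m^{-1/2})$ uses that $\widetilde{\mu}_{n}$ is a fixed positive number (should some $\widetilde{\mu}_{n}$ vanish, the argument still gives $\mathcal{O}(m^{-1/4})$ for that index). These are precisely the manipulations performed in \cite[Corollary 3.2]{KKR16}.
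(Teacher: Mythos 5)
Your argument is correct and is essentially the route the paper intends: the corollary is deduced from Theorem \ref{theo.bis} by comparing the quadratic forms of $\mathcal{L}_{m}^{\Gamma,\pm}$ with that of $\mathcal{L}^{\Gamma}-\frac{\kappa^2}{4}+K$ through the min-max principle, together with the a priori check that each fixed $n$ eventually lies in $\mathsf{N}_{\eps_{0},m}$ (obtained by quasimodes concentrated near $\Gamma$), which is exactly the bookkeeping the paper delegates to \cite[Corollary 3.2]{KKR16} without writing it out. Your caveat about the square-root step is well spotted: the $\mathcal{O}(m^{-\frac{1}{2}})$ rate as stated uses $\widetilde{\mu}_{n}>0$ (true, e.g., for the ball, where the spectrum is $\{n^2/R^2\}$), and if some $\widetilde{\mu}_{n}$ vanished one would only get $\mathcal{O}(m^{-\frac{1}{4}})$ for that index — a point the paper glosses over as well, so this is not a gap in your proposal relative to the paper's own argument.
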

Let us describe the spectrum of the effective operator on the boundary in the case where $\Omega$ is a ball (see \cite[Section 4.6]{Thaller1992}).The proof of the following proposition just follows from straightforward computations.
\begin{proposition}
	Assume that $\Omega = B(0,R)$ with $R>0$. Let $A = \beta(1+ 2S\cdot L)$ be the "spin-orbit" operator where $S = \frac{1}{2}\gamma_5 (\alpha_1,\alpha_2,\alpha_3)$ and $L = \x\times D$. We have 
	\[
		A\mathcal{B} = \mathcal{B}A\,,
	\]
	\[
		\mathcal{L}^{\Gamma}-\frac{\kappa^2}{4}+K = R^{-2}A^2\,,
	\]
	and its spectrum is $\{n^2/R^2,\,n\in \N^*\}$.
\end{proposition}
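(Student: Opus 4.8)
The plan is to deal with the three assertions in turn; once the geometry of the ball is taken into account, each reduces to the classical algebra of angular momentum. First I would note that on $\Gamma=\partial B(0,R)$ one has $\n(\x)=\x/R$, so that the Weingarten map equals $R^{-1}\Id$ on every tangent plane; hence $\kappa=2/R$, $K=1/R^2$, and $\tfrac{\kappa^2}{4}-K=0$, so that the operator \eqref{eq.eff-} is just $\mathcal L^\Gamma$. It therefore remains to prove $A\mathcal B=\mathcal B A$, the identity $\mathcal L^\Gamma=R^{-2}A^2$ on $L^2(\Gamma,\C)^4\cap\ker(\mathcal B-1_4)$, and that the spectrum of this last operator is $\{n^2/R^2:n\in\N^*\}$. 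It is convenient to set $\Sigma_k:=\gamma_5\alpha_k=\mathrm{diag}(\sigma_k,\sigma_k)$, so that $2S\cdot L=\sum_k\Sigma_kL_k=:\Sigma\cdot L$ and $A=\beta(1+\Sigma\cdot L)$, and to use repeatedly that $\beta$ and $\gamma_5$ each commute with every $\Sigma_k$ and with $L$, that $\beta$ anticommutes with $\gamma_5$ and with each $\alpha_k$, together with the ensuing relations $\beta\mathcal B=-\mathcal B\beta$, $\gamma_5\mathcal B\gamma_5=-\mathcal B$ and $\gamma_5A=-A\gamma_5$.

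For $A\mathcal B=\mathcal B A$ I would rewrite $\mathcal B=-i\beta(\alpha\cdot\n)=-\tfrac{i}{R}\beta\gamma_5(\Sigma\cdot\x)$ and commute $\beta$ and $\gamma_5$ through $A$; the identity then boils down to the ``orbital--spin'' anticommutation relation $\{\,1+\Sigma\cdot L,\ \Sigma\cdot\x\,\}=0$ on $\Gamma$, and since $\{1,\Sigma\cdot\x\}=2\,\Sigma\cdot\x$ this is the relation $\{\Sigma\cdot L,\Sigma\cdot\x\}=-2\,\Sigma\cdot\x$. I would obtain it from $(\sigma\cdot a)(\sigma\cdot b)=a\cdot b+i\,\sigma\cdot(a\times b)$ applied blockwise, together with $\x\cdot L=L\cdot\x=0$ and $L\times\x+\x\times L=2i\,\x$, the last identity following from $[L_j,x_k]=i\varepsilon_{jkl}x_l$.

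For $\mathcal L^\Gamma=R^{-2}A^2$ I would first expand, using $L\times L=iL$, to get $A^2=L^2+\Sigma\cdot L+1$. The operator $A$ is self-adjoint on $L^2(\Gamma,\C)^4$ ($\beta$ being Hermitian and commuting with the self-adjoint operator $\Sigma\cdot L$), so for $\psi$ in the form domain $\|A\psi\|_{L^2(\Gamma)}^2=\langle\psi,L^2\psi\rangle_{L^2(\Gamma)}+\langle\psi,(\Sigma\cdot L+1)\psi\rangle_{L^2(\Gamma)}$. On $\Gamma=S^2_R$ one has $L^2=-R^2\Delta_\Gamma$ componentwise, and $\mathcal Q^\Gamma$ is the Dirichlet form of $-\Delta_\Gamma$, so the first term equals $R^2\mathcal Q^\Gamma(\psi)$. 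The second term vanishes when $\mathcal B\psi=\psi$: indeed $\Sigma\cdot L+1=\beta A$, and a short computation using $[A,\mathcal B]=0$ and $\{\beta,\mathcal B\}=0$ shows $\langle\psi,\beta A\psi\rangle=-\langle\psi,\beta A\psi\rangle$, so $\langle\psi,(\Sigma\cdot L+1)\psi\rangle=0$. Hence $\mathcal Q^\Gamma(\psi)=R^{-2}\|A\psi\|_{L^2(\Gamma)}^2=R^{-2}\langle\psi,A^2\psi\rangle_{L^2(\Gamma)}$ for every $\psi\in H^1(\Gamma,\C)^4\cap\ker(\mathcal B-1_4)$. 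Since $A$, hence $A^2$, commutes with the orthogonal projection $\tfrac12(1+\mathcal B)$, and since $\Sigma\cdot L$ is form-bounded by $L^2$ with arbitrarily small relative bound so that the form domain of $R^{-2}A^2$ inside the constraint is exactly $H^1(\Gamma,\C)^4\cap\ker(\mathcal B-1_4)$, the two closed quadratic forms coincide and $\mathcal L^\Gamma$ is the restriction of $R^{-2}A^2$ to $L^2(\Gamma,\C)^4\cap\ker(\mathcal B-1_4)$.

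For the spectrum I would write $J:=L+\tfrac12\Sigma$, so that $A^2=L^2+\Sigma\cdot L+1=J^2+\tfrac14$; splitting $\C^4=\C^2\oplus\C^2$, on each block $J$ is the total angular momentum of a spin-$\tfrac12$ particle on the sphere, whence (as in \cite[Section 4.6]{Thaller1992}) $\mathsf{sp}(J^2)=\{\,j(j+1):j\in\{\tfrac12,\tfrac32,\tfrac52,\dots\}\,\}$, and so $\mathsf{sp}(A^2)=\{(j+\tfrac12)^2\}=\{n^2:n\in\N^*\}$ on all of $L^2(\Gamma,\C)^4$. The only genuinely delicate point, and the one I expect to be the main obstacle, is to check that passing to the MIT bag subspace $\ker(\mathcal B-1_4)$ removes no eigenvalue. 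Here $\gamma_5$ does the job: it is unitary, preserves $H^1(\Gamma,\C)^4$, commutes with $A^2$ (because $\gamma_5A=-A\gamma_5$), and, since $\gamma_5\mathcal B\gamma_5=-\mathcal B$, it exchanges $\ker(\mathcal B-1_4)$ and $\ker(\mathcal B+1_4)$; it therefore conjugates $A^2|_{\ker(\mathcal B-1_4)}$ onto $A^2|_{\ker(\mathcal B+1_4)}$, so these two restrictions are unitarily equivalent, and since their spectra together make up $\{n^2:n\in\N^*\}$, each equals $\{n^2:n\in\N^*\}$. This gives $\mathsf{sp}(\mathcal L^\Gamma)=\mathsf{sp}\bigl(R^{-2}A^2|_{\ker(\mathcal B-1_4)}\bigr)=\{n^2/R^2:n\in\N^*\}$. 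Apart from this last point, everything is the routine Dirac-algebra and angular-momentum computation the authors refer to.
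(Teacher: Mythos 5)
Your proposal is correct: the paper offers no written proof beyond ``straightforward computations'' with a pointer to \cite[Section 4.6]{Thaller1992}, and your argument is exactly that intended computation — the identities $\{\Sigma\cdot L,\Sigma\cdot\x\}=-2\,\Sigma\cdot\x$, $A^2=L^2+\Sigma\cdot L+1=J^2+\tfrac14$, the cancellation $\kappa^2/4=K=1/R^2$ on the sphere, and the vanishing of $\langle\psi,\beta A\psi\rangle$ on $\ker(\mathcal B-1_4)$ all check out. Your use of the $\gamma_5$ conjugation (which exchanges $\ker(\mathcal B\mp 1_4)$ and commutes with $A^2$) to show that restricting to the MIT subspace loses no eigenvalue is a clean way to settle the one point the reference's channel-by-channel computation would otherwise handle explicitly.
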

\subsection{Remarks}
Let us conclude this introduction with some comments related to Robin Laplacians, $\delta$-interactions and shell-interactions.
\subsubsection{Comparison to Robin Laplacians}
Theorem \ref{theo.bis} shares common features with the known results about the Robin Laplacian in the strong coupling limit (see \cite{PP16} for the asymptotic of individual eigenvalues and \cite{KKR16} in relation with the spectral uniformity and the semiclassical point of view). But two major differences have to be emphasized. Firstly, the effective operator is \emph{not semiclassical} in our case (it looks like the effective operator in the case of a Schr\"odinger operator with a strong attractive $\delta$-interaction on $\Gamma$, see \cite{DEKP16}). Secondly, the effective operator in our case is a quadratic function of the principal curvatures (and not a linear one as in the Robin case). These differences are crucially related to the vectorial nature of the Dirac operator with the MIT conditions:  they lead to a kind of semiclassical degeneracy. It is also rather surprising that the order of this degeneracy is still less than the order of the famous Born-Oppenheimer correction. Here, by the Born-Oppenheimer method, we mean a semiclassical method of reduction to the boundary explained in Sections \ref{sec.4} and \ref{sec.5}.

\subsubsection{Shell interactions}\label{rel. shell int}
There is a close relation between the MIT bag model that we study in this work and the shell interactions for Dirac operators studied in \cite{AMV15}. In \cite[Theorem 5.5]{AMV15}, the authors prove that $H+V_{es}$ generates confinement with respect to $\Gamma$ for $\lambda_e^2-\lambda_s^2=-4$, where 
\[\displaystyle{V_{es}\psi=\frac{1}{2}\,(\lambda_e+\lambda_s\beta)(\psi_++\psi_-)}\dx\Gamma\,,\]
$\lambda_e,\lambda_s\in \mathbb{R}$, $\psi_{\pm}$ are the non-tangential boundary values of $\psi$ on $\Gamma$ and $\dx\Gamma$ is the surface measure on $\Gamma$. By using \cite[Proposition 3.1]{AMV15}, it is possible to see that the existence of eigenvalues for $H+V_{es}$ is equivalent to a spectral property of some bounded operators on $\Gamma$. More precisely, 
\begin{equation}\label{remark1 eq1}
\ker(H+V_{es}-\mu)\neq 0 \Longleftrightarrow \ker(\lambda_s\beta-\lambda_e+4C_{\sigma,\mu})\neq 0\,,
\end{equation}
where $C_{\sigma,\mu}$ is a Cauchy-type operator defined on $\Gamma$ in the principal value sense.
In the regime $\lambda_e^2-\lambda_s^2=-4$, the right hand side of $(\ref{remark1 eq1})$ is also equivalent to the existence of a solution $\psi\in H^1(\Omega, \mathbb{C}^4)$ of the boundary value problem $(H-\mu)\psi=0$ in $\Omega$ and $\psi=\frac{i}{2}\,(\lambda_e-\lambda_s\beta)(\alpha\cdot \n)\psi$ on $\Gamma$. Observe that when $\lambda_e=0$ and $\lambda_s=2$ we recover the MIT bag model given in Definition $\ref{def.MIT}$. It is worth pointing out that the right hand side of $(\ref{remark1 eq1})$ does not hold for $\lambda_s>0$ if $\mu\in[-m,m]$. So the eigenvalues must belong to $\mathbb{R}\backslash [-m, m]$ for $\lambda_s>0$, as we already know from \cite[Section 5]{LTPhd} in the case $\lambda_e=0$ and $\lambda_s=2$.

\subsection{Organization of the paper}
The paper is organized as follows. In Section \ref{sec.2}, we prove Theorem \ref{theo:selfadj} by constructing extension operators adapted to the Dirac operator. Section \ref{sec.3} is devoted to the proofs of Theorems \ref{theo.positive} and \ref{theo.positive2}. The remaining sections are concerned with the case of the large negative mass. In Section \ref{sec.4}, we explain the main steps towards the proof of Theorem \ref{theo.bis}. In Section \ref{sec.5}, we prove the propositions and theorems stated in Section \ref{sec.4}.

\section{Proof of Theorem \ref{theo:selfadj}}\label{sec.2}
\subsection{Preliminaries}
This section is devoted to establish elementary algebraic properties.
\begin{lemma}\label{eq:multalp}
For all  $\x,\y\in\R^3$, we have
\begin{equation*}
	\begin{split}
	&
	(\alpha\cdot \x)(\alpha \cdot \y) = (\x\cdot \y)1_4 + i\gamma_5\alpha\cdot (\x\times \y)\,,
	\\
	&
	\beta (\alpha\cdot \x) = - (\alpha\cdot \x) \beta\,,\quad \beta \gamma_5 = -\gamma_5\beta\,,\\
	&
	\gamma_5(\alpha\cdot \x) = (\alpha\cdot \x)\gamma_5\,.
	\end{split}
\end{equation*}
\end{lemma}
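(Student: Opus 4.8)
The plan is to reduce all four identities to the $2\times 2$ Pauli block structure and then invoke a single nontrivial input: the multiplication law for Pauli matrices. Writing $\sigma\cdot\x=\sum_{j=1}^3\sigma_j x_j$, one has from the definitions
\[
\alpha\cdot\x=\begin{pmatrix}0&\sigma\cdot\x\\ \sigma\cdot\x&0\end{pmatrix},\qquad
\beta=\begin{pmatrix}1_2&0\\0&-1_2\end{pmatrix},\qquad
\gamma_5=\begin{pmatrix}0&1_2\\1_2&0\end{pmatrix}.
\]
First I would record the elementary relations $\sigma_j\sigma_k=\delta_{jk}1_2+i\sum_{\ell=1}^3\epsilon_{jk\ell}\,\sigma_\ell$, which are checked directly on the three $2\times2$ matrices $\sigma_1,\sigma_2,\sigma_3$ (six products, or equivalently $\sigma_j^2=1_2$ and $\sigma_1\sigma_2=i\sigma_3$ together with cyclic permutations). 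Summing against $x_j y_k$ gives $(\sigma\cdot\x)(\sigma\cdot\y)=(\x\cdot\y)1_2+i\,\sigma\cdot(\x\times\y)$ for all $\x,\y\in\R^3$.

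Next I would treat the three (anti)commutation identities, which require no computation beyond multiplying block matrices. Since $\beta$ is block-diagonal with blocks $1_2,-1_2$ and $\alpha\cdot\x$, $\gamma_5$ are block-antidiagonal, both $\beta(\alpha\cdot\x)+(\alpha\cdot\x)\beta$ and $\beta\gamma_5+\gamma_5\beta$ vanish; and because $\gamma_5$ is block-antidiagonal with both blocks equal to $1_2$, it commutes with any block-antidiagonal matrix whose two off-diagonal blocks coincide, in particular with $\alpha\cdot\x$. This yields $\beta(\alpha\cdot\x)=-(\alpha\cdot\x)\beta$, $\beta\gamma_5=-\gamma_5\beta$, and $\gamma_5(\alpha\cdot\x)=(\alpha\cdot\x)\gamma_5$.

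Finally, for the first identity I would compute the block product
\[
(\alpha\cdot\x)(\alpha\cdot\y)=\begin{pmatrix}(\sigma\cdot\x)(\sigma\cdot\y)&0\\0&(\sigma\cdot\x)(\sigma\cdot\y)\end{pmatrix}
=(\x\cdot\y)1_4+i\begin{pmatrix}\sigma\cdot(\x\times\y)&0\\0&\sigma\cdot(\x\times\y)\end{pmatrix},
\]
using the Pauli multiplication law in each block, and then recognize that the last block-diagonal matrix is exactly $\gamma_5\,(\alpha\cdot(\x\times\y))$ by the block form of $\gamma_5$ and $\alpha\cdot(\x\times\y)$. This gives $(\alpha\cdot\x)(\alpha\cdot\y)=(\x\cdot\y)1_4+i\gamma_5\alpha\cdot(\x\times\y)$, completing the proof. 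There is no real obstacle here: the only place demanding care is bookkeeping the Levi-Civita symbol so that the $i\sigma\cdot(\x\times\y)$ term comes out with the correct sign and orientation, and checking that multiplying by $\gamma_5$ on the left reproduces the block-diagonal (rather than block-antidiagonal) shape that appeared.
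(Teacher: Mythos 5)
Your proof is correct: the block decomposition of $\alpha\cdot\x$, $\beta$, $\gamma_5$, the Pauli product rule $\sigma_j\sigma_k=\delta_{jk}1_2+i\sum_\ell\epsilon_{jk\ell}\sigma_\ell$, and the observation that $\gamma_5(\alpha\cdot\z)$ is the block-diagonal matrix with both blocks $\sigma\cdot\z$ together give all four identities, and your sign bookkeeping for $\sigma\cdot(\x\times\y)$ is right. The only difference from the paper is that the paper does not prove the lemma at all: it simply refers to Thaller's book (Appendix 1.B), where essentially the same representation-based algebra is carried out. So what you have written is a self-contained substitute for that citation rather than a genuinely different argument; it buys the reader an elementary, two-line-per-identity verification in the standard (Dirac) representation at the cost of being representation-dependent, whereas one could alternatively derive the identities purely from the abstract anticommutation relations $\alpha_j\alpha_k+\alpha_k\alpha_j=2\delta_{jk}1_4$, $\alpha_j\beta+\beta\alpha_j=0$ and the definition $\gamma_5=-i\alpha_1\alpha_2\alpha_3$ (up to convention), which is closer in spirit to how such facts are used later in the paper. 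No gaps to report.
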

\begin{proof}
We refer to \cite[Appendix 1.B]{Thaller1992}.
\end{proof}
Points \ref{eq:theo12} and \ref{eq:theo13} of Theorem \ref{theo:selfadj} are immediate consequences of the following lemma (see \cite[Section 1.4.6]{Thaller1992} and \cite[Section 10.4.5]{reiher2014relativistic}).
\begin{lemma}[Discrete symmetries] Let us introduce three operators defined for $\psi\in \C^4$ by 
	\[
	\begin{split}
		&C\psi = i\beta\alpha_2\overline{\psi}, \quad\mbox{Charge conjugation operator,}\\
		&T\psi  = -i\gamma_5\alpha_2\overline{\psi}, \quad \mbox{Time reversal-symmetry operator,}\\
		&CT\psi  = \beta \gamma_5 \psi, \quad \mbox{CT-symmetry operator.}
	\end{split}
	\] 
	The operators $C$ and $T$, \it{resp.}~$CT$ are anti-unitary, resp.~unitary transformations that leave $\D(H)$  invariant and satisfy $C^2 = -T^2 = 1_4$, $CT=TC$,
	\[
		HC = -CH,\quad HT = TH\quad \mbox{ and }\quad H\left(CT\right) = -\left(CT\right)H\,.
	\]
	Moreover, we have for any $\psi\in \C^4$ that $\braket{\psi, T\psi} = 0$.
\end{lemma}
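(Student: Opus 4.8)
The plan is to reduce the whole statement to elementary pointwise algebra in $\C^4$ together with Lemma~\ref{eq:multalp}, after writing $C$ and $T$ as a complex conjugation followed by multiplication by a fixed unitary matrix. Let $K\psi=\overline\psi$ be the complex conjugation on $\C^4$: it is antilinear and isometric, hence antiunitary, and $K^2=1_4$. Since $\beta$, $\gamma_5$, $\alpha_1$ and $\alpha_3$ have real entries whereas $\overline{\alpha_2}=-\alpha_2$ (only $\sigma_2$ is purely imaginary), we may write $C=M_CK$ and $T=M_TK$, where $M_C:=i\beta\alpha_2$ and $M_T:=-i\gamma_5\alpha_2$ are unitary matrices (products of unitary matrices, up to a scalar of modulus one). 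The only facts I will use about them, all straight from Lemma~\ref{eq:multalp}, are: $\overline{M_C}=M_C$ and $\overline{M_T}=M_T$; $M_C$ commutes with $\alpha_1$ and $\alpha_3$ and anticommutes with $\alpha_2$ and with $\beta$; $M_T$ commutes with $\alpha_2$ and with $\beta$ and anticommutes with $\alpha_1$ and $\alpha_3$.

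First I would dispatch the purely algebraic claims. Antiunitarity of $C$ and $T$ is immediate since each is a unitary matrix multiplication composed with the antiunitary $K$; moreover $CT=M_CKM_TK=M_C\overline{M_T}=M_CM_T=(i\beta\alpha_2)(-i\gamma_5\alpha_2)=\beta\gamma_5$ (using $\alpha_2\gamma_5=\gamma_5\alpha_2$ and $\alpha_2^2=1_4$), so $CT$ is unitary. Likewise $C^2=M_C^2=-\beta\alpha_2\beta\alpha_2=\beta^2\alpha_2^2=1_4$, $T^2=M_T^2=-\gamma_5\alpha_2\gamma_5\alpha_2=-\gamma_5^2\alpha_2^2=-1_4$, and $TC=M_TM_C=\gamma_5\alpha_2\beta\alpha_2=-\gamma_5\beta=\beta\gamma_5=CT$.

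Next come the relations with $H=\alpha\cdot D+m\beta$. The single nonalgebraic input is that $K$ reverses momentum: $KD_j=-D_jK$, because $\overline{-i\partial_j\psi}=i\partial_j\overline\psi$. Introducing the conjugated vector $\widehat\alpha:=(\alpha_1,-\alpha_2,\alpha_3)=(\overline{\alpha_1},\overline{\alpha_2},\overline{\alpha_3})$, this gives $K(\alpha\cdot D)=-(\widehat\alpha\cdot D)K$ and $K\beta=\beta K$. Then one pushes $M_C$ (resp. $M_T$) through $\widehat\alpha\cdot D$ and $\beta$ using the commutation rules above: $M_C(\widehat\alpha\cdot D)=(\alpha\cdot D)M_C$ and $M_C\beta=-\beta M_C$, hence $CH\psi=M_C\big(-(\widehat\alpha\cdot D)+m\beta\big)K\psi=-(\alpha\cdot D+m\beta)M_CK\psi=-HC\psi$; similarly $M_T(\widehat\alpha\cdot D)=-(\alpha\cdot D)M_T$ and $M_T\beta=\beta M_T$ yield $TH=HT$; finally $H(CT)=-(CT)H$ follows by composing these two identities, or directly from $CT=\beta\gamma_5$ and Lemma~\ref{eq:multalp}. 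For the invariance of $\mathcal{D}(H)$, note that multiplication by a constant matrix and complex conjugation both map $H^1(\Omega,\C^4)$ into itself, so only the boundary condition needs checking; rewriting $\mathcal{B}\psi=\psi$ on $\Gamma$ as $(\alpha\cdot\n)\psi=i\beta\psi$, conjugating, and multiplying on the left by $M_C$ (resp. $M_T$) — using exactly the same push-through rules, now with the constant vector $\n$ in place of $D$ — gives $\mathcal{B}(C\psi)=C\psi$ (resp. $\mathcal{B}(T\psi)=T\psi$) on $\Gamma$, so $C$, $T$ and hence $CT=C\circ T$ leave $\mathcal{D}(H)$ invariant.

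The last assertion is the classical fact that an antiunitary $T$ with $T^2=-1_4$ satisfies $\langle\psi,T\psi\rangle=0$: by antiunitarity $\langle T\phi,T\psi\rangle=\overline{\langle\phi,\psi\rangle}$, so taking $\phi=T\psi$ gives $-\langle\psi,T\psi\rangle=\langle T^2\psi,T\psi\rangle=\overline{\langle T\psi,\psi\rangle}=\langle\psi,T\psi\rangle$, whence the claim. There is no real difficulty in any of this; the only point demanding care is the systematic sign bookkeeping produced by $\overline{\alpha_2}=-\alpha_2$ and by the anticommutators of Lemma~\ref{eq:multalp}, most notably in the verification that $\mathcal{D}(H)$ is preserved.
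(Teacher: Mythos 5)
Your verification is correct: the factorization $C=M_CK$, $T=M_TK$ with $K$ the componentwise conjugation and $M_C=i\beta\alpha_2$, $M_T=-i\gamma_5\alpha_2$ real unitary matrices, the sign bookkeeping through $\overline{\alpha_2}=-\alpha_2$ and $KD_j=-D_jK$, the pointwise check that the condition $(\alpha\cdot\n)\psi=i\beta\psi$ on $\Gamma$ is preserved, and the standard quaternionic argument ($T$ antiunitary with $T^2=-1_4$ forces $\braket{\psi,T\psi}=0$) are all accurate, and together they cover every assertion of the lemma. Note, however, that the paper itself offers no proof of this lemma: it simply refers to Thaller (Section 1.4.6) and Reiher--Wolf (Section 10.4.5) for the free-space algebra of $C$, $T$ and $CT$, and leaves implicit the only point that is specific to the MIT bag setting, namely that these transformations leave $\D(H)$ invariant (i.e.\ preserve the boundary condition $\mathcal{B}\psi=\psi$, where $\mathcal{B}$ depends on $\x$ through $\n(\x)$). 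Your write-up therefore supplies more than the paper does; in particular the push-through computation with $\n$ in place of $D$, showing $\mathcal{B}(C\psi)=C\psi$ and $\mathcal{B}(T\psi)=T\psi$ on $\Gamma$, is exactly the step worth making explicit, since it is what makes the symmetry argument legitimately applicable to the self-adjoint realization on $\Omega$ rather than to the free Dirac operator, and it is what justifies Points \ref{eq:theo12} and \ref{eq:theo13} of Theorem \ref{theo:selfadj}.
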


We can relate the mean curvature to the commutator between the boundary condition and a Dirac derivative parallel to the boundary.
\begin{lemma}[Mean curvature as commutator]\label{eq.anticurv}
We have
\[[\alpha\cdot(\n\times D),\mathcal{B}]=-\kappa\gamma_{5}\mathcal{B}\,.\]
\end{lemma}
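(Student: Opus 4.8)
The plan is to prove the identity $[\alpha\cdot(\n\times D),\mathcal{B}]=-\kappa\gamma_{5}\mathcal{B}$ by a direct local computation, expanding the commutator and using the algebraic relations of Lemma~\ref{eq:multalp} together with the definition $\mathcal{B}=-i\beta(\alpha\cdot\n)$. First I would write out the commutator: since $\mathcal{B}=-i\beta(\alpha\cdot\n)$ depends on $\x\in\Gamma$ through $\n=\n(\x)$, the operator $\alpha\cdot(\n\times D)$ acting on $\mathcal{B}\psi$ produces, by the Leibniz rule, a term where the derivatives hit $\psi$ and a term where they hit the matrix $\mathcal{B}$. The first term is exactly $\mathcal{B}\,\alpha\cdot(\n\times D)\psi$ rearranged (the matrices $\alpha\cdot(\n\times D)$ and $\mathcal{B}$ have constant entries relative to each other once $\n$ is fixed, so they would commute were it not for the derivative), so in the commutator it must be combined carefully: one must check that $\alpha\cdot(\n\times D)$ and the \emph{constant matrix} $-i\beta(\alpha\cdot\n)$ actually anticommute or commute after using Lemma~\ref{eq:multalp}, leaving only the genuinely new piece coming from $D$ differentiating the entries of $\n(\x)$.

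The key computational step is therefore to isolate the derivative-of-$\n$ contribution. Writing $\n\times D = \sum_{j}(\n\times D)_j$ and differentiating $\alpha\cdot\n$, the extra term is proportional to $\sum_{j,k}(\n\times)_{jk}\,\alpha_j\,\beta\,(\alpha\cdot\partial_k\n)$, which I would simplify by first moving $\beta$ through $\alpha_j$ (picking up a sign by $\beta(\alpha\cdot\x)=-(\alpha\cdot\x)\beta$) and then using the product formula $(\alpha\cdot\x)(\alpha\cdot\y)=(\x\cdot\y)1_4+i\gamma_5\alpha\cdot(\x\times\y)$. The crucial geometric input is that, on $\Gamma$, the vectors $\partial_k\n$ are tangent to $\Gamma$ (since $|\n|^2\equiv 1$ implies $\n\cdot\partial_k\n=0$), and the Weingarten map $d\n_s$ is self-adjoint, so that the antisymmetric part in the cross product collapses and the only surviving scalar is the trace $\kappa=\operatorname{tr}(d\n_s)$. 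After carefully contracting the $\n\times$ structure against $\partial_k\n$ and invoking $\mathcal{B}^2=1_4$ and $\beta\gamma_5=-\gamma_5\beta$ to re-express the result, the leftover should assemble into $-\kappa\gamma_5\mathcal{B}$.

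The main obstacle I anticipate is bookkeeping: correctly handling the interplay between (i) the differential operator $D$ acting on the matrix-valued function $\n(\x)$ versus on $\psi$, (ii) the noncommutativity of $\beta$, $\gamma_5$ and the $\alpha_j$, and (iii) the contraction of the Levi-Civita structure in $\n\times D$ with the tangential derivatives $\partial_k\n$. In particular one must be scrupulous that the terms in which the derivatives act on $\psi$ cancel identically in the commutator — this requires that $\alpha\cdot(\n\times D)$ and the matrix $\alpha\cdot\n$ genuinely commute at fixed $\x$ (which follows because the cross product removes the $\n$-component, and $(\alpha\cdot\x)(\alpha\cdot\y)+(\alpha\cdot\y)(\alpha\cdot\x)=2(\x\cdot\y)1_4$ shows $\alpha\cdot\n$ anticommutes with $\alpha\cdot v$ for $v\perp\n$, while $\beta$ supplies a compensating sign). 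Once these sign cancellations are verified, the rest is a clean application of Lemma~\ref{eq:multalp}. A convenient way to organize the computation is to work in local coordinates at a fixed point $\x_0\in\Gamma$ where $\n(\x_0)=e_3$ and the Weingarten map is diagonal with eigenvalues $\kappa_1,\kappa_2$ (so $\kappa=\kappa_1+\kappa_2$), reducing the identity to a finite $4\times4$ matrix check.
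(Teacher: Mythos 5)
Your plan is correct and is essentially the paper's own proof: the paper likewise applies the Leibniz rule, uses that the coefficients $\alpha\cdot w$ with $w\perp\n$ anticommute with both $\beta$ and $\alpha\cdot\n$ so the terms where $D$ hits $\psi$ reassemble into $\mathcal{B}\,\alpha\cdot(\n\times D)\psi$, and evaluates the leftover $\partial\n$-terms with the product identity of Lemma~\ref{eq:multalp}, working directly in the principal frame $(\n,\n',\n'')$ of the Weingarten map — exactly the local-frame simplification you propose at the end, where the symmetry of $d\n_s$ makes the off-diagonal (scalar) contributions cancel and the trace $\kappa$ survives as the coefficient of $i\gamma_5\,\alpha\cdot\n$.
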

\begin{proof}
Let $s\in \partial \Omega$.
First we have, by anticommutation between $\alpha$ and $\beta$,
\[\alpha\cdot(\n\times D)\mathcal{B}\psi=\beta\,\alpha\cdot(\n\times\nabla)(\alpha\cdot\n\,\psi)\,.\]
Let $\n'$ and $\n''$ be two eigenvectors of the Weingarten map $d\n_s$ whose respective eigenvalues are denoted by $(\lambda',\lambda'')$ and such that $(\n,\n',\n'')$ is an orthonormal basis of $\R^3$.
We have
\[\alpha\cdot(\n\times\nabla)=\alpha\cdot \n''\partial_{\n'}-\alpha\cdot\n'\partial_{\n''}\,.\]
Then, by the Leibniz formula and Lemma \ref{eq:multalp}, it follows that
\[
\begin{split}
	(\alpha\cdot\n\times\nabla)(\alpha\cdot\n\,\psi)
	&=-\alpha\cdot\n\left(\alpha\cdot \n''\partial_{\n'}-\alpha\cdot\n'\partial_{\n''}\right)\psi
	\\
	&\qquad \qquad+\left((\alpha\cdot\n'')(\alpha\cdot\partial_{\n'}\n)-(\alpha\cdot\n')(\alpha\cdot\partial_{\n''}\n)\right)\psi\,,
\end{split}
\]
and thus, again by Lemma \ref{eq:multalp},
\[(\alpha\cdot\n\times\nabla)(\alpha\cdot\n\,\psi)=-\alpha\cdot\n\left(\alpha\cdot \n''\partial_{\n'}-\alpha\cdot\n'\partial_{\n''}\right)\psi-i(\lambda'+\lambda'')\gamma_{5}\alpha\cdot\n\,.\]
We deduce that
\[\alpha\cdot(\n\times D)\mathcal{B}\psi=\mathcal{B}(\alpha\cdot\n\times D)\psi-i(\lambda'+\lambda'')\beta\gamma_{5}\alpha\cdot\n\,,\]
and the conclusion follows.
\end{proof}

	\subsection{Symmetry of $H$}
	Let us start by proving the symmetry of $H$.

	\begin{lemma}\label{lem:sym}
	$(H,\D(H))$ is a symmetric operator.
\end{lemma}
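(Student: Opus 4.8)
The goal is to show that for all $\psi, \varphi \in \D(H)$, we have $\braket{H\psi, \varphi}_\Omega = \braket{\psi, H\varphi}_\Omega$. The plan is to integrate by parts and show that the boundary term vanishes precisely because of the MIT bag condition $\mathcal{B}\psi = \psi$ on $\Gamma$.

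First I would compute $\braket{H\psi, \varphi}_\Omega - \braket{\psi, H\varphi}_\Omega$. Since $m\beta$ is a Hermitian matrix, the mass term contributes nothing to the difference, and we are left with $\braket{(\alpha\cdot D)\psi, \varphi}_\Omega - \braket{\psi, (\alpha\cdot D)\varphi}_\Omega$. Writing $\alpha\cdot D = -i\sum_j \alpha_j \partial_j$ and using that each $\alpha_j$ is Hermitian, an integration by parts (the divergence theorem, valid since $\psi, \varphi \in H^1(\Omega,\C^4)$ and $\Omega$ has regular boundary) produces a boundary integral. Specifically, one gets
\[
\braket{H\psi, \varphi}_\Omega - \braket{\psi, H\varphi}_\Omega = -i\int_{\Gamma}\braket{(\alpha\cdot\n)\psi, \varphi}\dx s\,.
\]

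Next I would show the boundary integrand vanishes pointwise on $\Gamma$. Using the MIT bag conditions $\psi = \mathcal{B}\psi = -i\beta(\alpha\cdot\n)\psi$ and $\varphi = \mathcal{B}\varphi = -i\beta(\alpha\cdot\n)\varphi$, I rewrite $\braket{(\alpha\cdot\n)\psi,\varphi}$. The key algebraic facts from Lemma \ref{eq:multalp} are that $\beta$ anticommutes with $\alpha\cdot\n$ and that $(\alpha\cdot\n)^2 = 1_4$ (taking $\x = \y = \n$, a unit vector), together with $\beta$ and $\alpha\cdot\n$ being Hermitian. Substituting $\varphi = -i\beta(\alpha\cdot\n)\varphi$ on the right and $\psi = -i\beta(\alpha\cdot\n)\psi$ on the left, and moving the matrices across the scalar product as their adjoints, one finds $\braket{(\alpha\cdot\n)\psi,\varphi} = -\braket{(\alpha\cdot\n)\psi,\varphi}$ on $\Gamma$, hence it is zero. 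Therefore the boundary term vanishes and $H$ is symmetric.

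I do not expect any serious obstacle here: the only points requiring care are checking that the trace theorem legitimately gives the boundary integral for $H^1$ functions (already noted in Definition \ref{def.MIT}) and keeping track of the $i$'s and the anticommutation signs when manipulating $\braket{(\alpha\cdot\n)\psi,\varphi}$ with the boundary conditions. The genuinely substantive part of proving self-adjointness — showing that the domain is large enough, i.e. that $H \pm i$ are surjective, via the extension-operator construction — is deferred to later in the section and is not needed for this lemma.
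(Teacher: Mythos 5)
Your proof is correct and follows essentially the same route as the paper: integrate by parts via the Green--Riemann formula and use the boundary condition $\mathcal{B}\psi=\psi$, $\mathcal{B}\varphi=\varphi$ to kill the boundary term. The only (immaterial) difference is the last step: you show the integrand $\braket{(\alpha\cdot\n)\psi,\varphi}$ vanishes pointwise on $\Gamma$ by the anticommutation of $\beta$ with $\alpha\cdot\n$ and $(\alpha\cdot\n)^2=1_4$, whereas the paper rewrites the boundary term as $\braket{\beta\varphi,\psi}_{\partial\Omega}$ and concludes it is zero because the two sides of the identity have opposite symmetry in $(\varphi,\psi)$.
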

\begin{proof}
	Since the $\alpha$-matrices are Hermitian, we have, thanks to the Green-Riemann formula:
	\begin{equation}\label{eq.IBP0}
	\forall\varphi,\psi\in H^1(\Omega,\C^4)\,,\qquad\braket{\alpha\cdot D\varphi,\psi}_\Omega=\braket{\varphi,\alpha\cdot D\psi}_\Omega+\braket{(-i\alpha\cdot\n)\varphi,\psi}_{\partial\Omega}\,.
	\end{equation}
	Now we consider $\psi,\varphi\in\D(H)$. By using $\beta^2=1_{4}$ and the boundary condition, we get
	\[\braket{(-i\alpha\cdot\n)\varphi,\psi}_{\partial\Omega}=\braket{\beta\varphi,\psi}_{\partial\Omega}\,,\]
	so that, we deduce
	\begin{equation}\label{eq.IBP1}
	\forall \varphi,\psi\in\mathcal{D}(H)\,,\qquad\braket{\alpha\cdot D\varphi,\psi}_\Omega-\braket{\varphi,\alpha\cdot D\psi}_\Omega=\braket{\beta\varphi,\psi}_{\partial\Omega}\,.
	\end{equation}
	The right hand side of \eqref{eq.IBP1} is a skew-symmetric expression of $(\varphi, \psi)$ and the left hand side is symmetric in $(\varphi, \psi)$ since $\beta$ is Hermitian.
	Thus both sides must be zero.
\end{proof}

	\subsection{Self-adjointness of $H$}
	This subsection is devoted to the proof of Point \ref{eq:theo11} of Theorem \ref{theo:selfadj}. 
	We denote by $\mathscr{L}(E,F)$ the set of continuous linear applications from $E$ to $F$ where $E$ and $F$ are Banach spaces. We recall that the domain of $H$ is independent of $m$: 
	\[
		\D(H) = \{\psi\in H^1(\Omega)^4,\;\mathcal{B}\psi = \psi \mbox{ on }\pa\Omega\}\,,
	\]
	and that the domain of the adjoint $H^\star$ is defined by
	\[
		\D(H^\star) = \{\psi\in L^2(\Omega)^4,\; L_\psi\in \mathscr{L}(L^2(\Omega)^4,\C)\}\,,
	\]
	where 
	\[
		L_\psi : \varphi\in\D(H)\mapsto \braket{\psi,H\varphi}_\Omega\in \C\,.
	\]
	By Lemma \ref{lem:sym}, we get that 
	\[
		\D(H)\subset\D(H^\star)\,.
	\]
	Let us remark that, without loss of generality, we can assume in the proof that $m = 0$ since the operator $\beta m $ is bounded (and self-adjoint) from $L^2(\Omega)^4$ into itself.
         The aim of this section is to establish that
	\begin{equation}\label{eq.HsH}
		\D(H^\star)\subset\D(H)\,.
	\end{equation}

\subsubsection{Extension operator on the half-space case}\label{sec:halfspace}
In this section, we consider the case when $\Omega=\R^3_{+}$ and we establish the existence of an extension operator.
\begin{lemma}\label{lem.extHS}
There exists an operator
                           \[
				P : \D(H^\star)\to \{ \psi\in L^2(\R^3)^4,\; \alpha\cdot D \psi\in L^2(\R^3)^4\} = H^1(\R^3)^4
			\]
			such that $P\psi_{|\R^3_+} = \psi$ and 
			\[
				\|P\psi\|^2_{H^1(\R^3)}= \|P\psi\|^2_{L^2(\R^3)} + \|\nabla P\psi\|^2_{L^2(\R^3)} 
				= 2\left(\|\psi\|^2_{L^2(\R^3_+)} + \|\alpha\cdot D \psi\|^2_{L^2(\R^3_+)}\right).
			\]
\end{lemma}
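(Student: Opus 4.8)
The plan is to define $P$ by a matrix-weighted reflection across the hyperplane $\Gamma_{0}:=\pa\R^{3}_{+}=\{x_{3}=0\}$, the weight being the MIT boundary matrix itself. On $\R^{3}_{+}=\{x_{3}>0\}$ the outward normal is the constant vector $\n=(0,0,-1)$, so $\mathcal{B}=-i\beta(\alpha\cdot\n)=i\beta\alpha_{3}$ is a \emph{constant} matrix; by Remark~\ref{rk:boundfllux} it is Hermitian with $\mathcal{B}^{2}=1_{4}$, and by Lemma~\ref{eq:multalp} one checks at once that $\mathcal{B}$ commutes with $\alpha_{1},\alpha_{2}$ and anticommutes with $\alpha_{3}$. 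Writing $R(x',x_{3})=(x',-x_{3})$, I set, for $\psi\in\D(H^{\star})$,
\[
(P\psi)(x)=\psi(x)\text{ for }x_{3}>0,\qquad (P\psi)(x)=\mathcal{B}\,\psi(R(x))\text{ for }x_{3}<0.
\]
Then $P$ is linear, $P\psi_{|\R^{3}_{+}}=\psi$, and $\|P\psi\|^{2}_{L^{2}(\R^{3})}=2\|\psi\|^{2}_{L^{2}(\R^{3}_{+})}$ since $\mathcal{B}$ is unitary. Moreover $m=0$, so $H=\alpha\cdot D$, and testing against $C^{\infty}_{c}(\R^{3}_{+})^{4}\subset\D(H)$ shows $\alpha\cdot D\psi=H^{\star}\psi\in L^{2}(\R^{3}_{+})^{4}$. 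Finally, the identification $\{u\in L^{2}(\R^{3})^{4}:\alpha\cdot Du\in L^{2}(\R^{3})^{4}\}=H^{1}(\R^{3})^{4}$ together with $\|\nabla u\|_{L^{2}(\R^{3})}=\|\alpha\cdot Du\|_{L^{2}(\R^{3})}$ on that space is just Plancherel applied to $(\alpha\cdot\xi)^{2}=|\xi|^{2}1_{4}$; it reduces the lemma to showing $\alpha\cdot D(P\psi)\in L^{2}(\R^{3})^{4}$ with $\|\alpha\cdot D(P\psi)\|^{2}_{L^{2}(\R^{3})}=2\|\alpha\cdot D\psi\|^{2}_{L^{2}(\R^{3}_{+})}$.

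The heart of the proof — and the only delicate point — is to control the distributional jump of $\alpha\cdot D(P\psi)$ along $\Gamma_{0}$; a priori $\psi$ lies only in the graph space of $\alpha\cdot D$ on $\R^{3}_{+}$, not in $H^{1}(\R^{3}_{+})^{4}$, so a naive ``differentiate the reflection'' argument is not available. First I would record the one-line identity (from the chain rule and the (anti)commutations above): for every $\phi\in C^{\infty}_{c}(\R^{3})^{4}$,
\[
(\alpha\cdot D)\bigl(\mathcal{B}\,(\phi\circ R)\bigr)=\mathcal{B}\,\bigl((\alpha\cdot D\phi)\circ R\bigr).
\]
Then, given $\phi\in C^{\infty}_{c}(\R^{3})^{4}$, I split $\braket{P\psi,\alpha\cdot D\phi}_{\R^{3}}$ over the two half-spaces, change variables $x\mapsto R(x)$ in the lower integral, and use the identity together with $\mathcal{B}^{\star}\mathcal{B}=1_{4}$ to obtain
\[
\braket{P\psi,\alpha\cdot D\phi}_{\R^{3}}=\braket{\psi,\,(\alpha\cdot D)\Phi}_{\R^{3}_{+}},\qquad \Phi:=\bigl(\phi+\mathcal{B}(\phi\circ R)\bigr)_{|\R^{3}_{+}}.
\]
The key observation is that $\Phi\in\D(H)=\{u\in H^{1}(\R^{3}_{+})^{4}:\mathcal{B}u=u\text{ on }\Gamma_{0}\}$: indeed $\Phi\in H^{1}(\R^{3}_{+})^{4}$ and its trace on $\Gamma_{0}$ equals $(1_{4}+\mathcal{B})\phi_{|\Gamma_{0}}$, which is fixed by $\mathcal{B}$ because $\mathcal{B}^{2}=1_{4}$. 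Hence, since $\psi\in\D(H^{\star})$ and $\|\Phi\|^{2}_{L^{2}(\R^{3}_{+})}\le 2\|\phi\|^{2}_{L^{2}(\R^{3}_{+})}+2\|\phi\circ R\|^{2}_{L^{2}(\R^{3}_{+})}=2\|\phi\|^{2}_{L^{2}(\R^{3})}$,
\[
\bigl|\braket{P\psi,\alpha\cdot D\phi}_{\R^{3}}\bigr|=\bigl|\braket{H^{\star}\psi,\Phi}_{\R^{3}_{+}}\bigr|\le\sqrt{2}\,\|\alpha\cdot D\psi\|_{L^{2}(\R^{3}_{+})}\,\|\phi\|_{L^{2}(\R^{3})}.
\]
By density of $C^{\infty}_{c}(\R^{3})^{4}$ and the Riesz representation theorem, $\alpha\cdot D(P\psi)\in L^{2}(\R^{3})^{4}$ with $\|\alpha\cdot D(P\psi)\|_{L^{2}(\R^{3})}\le\sqrt{2}\,\|\alpha\cdot D\psi\|_{L^{2}(\R^{3}_{+})}$, and therefore $P\psi\in H^{1}(\R^{3})^{4}$.

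It then remains to upgrade this inequality to equality. Once $P\psi\in H^{1}(\R^{3})^{4}$, its restriction to $\R^{3}_{-}$ is in $H^{1}$; since that restriction is $\mathcal{B}\,\psi\circ R$, this forces $\psi\in H^{1}(\R^{3}_{+})^{4}$, so $P\psi$ has no jump across $\Gamma_{0}$ and $\alpha\cdot D(P\psi)$ equals $\alpha\cdot D\psi$ on $\R^{3}_{+}$ and $\mathcal{B}\bigl((\alpha\cdot D\psi)\circ R\bigr)$ on $\R^{3}_{-}$, now as an a.e.\ identity. Since $\mathcal{B}$ is unitary this gives $\|\alpha\cdot D(P\psi)\|^{2}_{L^{2}(\R^{3})}=2\|\alpha\cdot D\psi\|^{2}_{L^{2}(\R^{3}_{+})}$; adding $\|P\psi\|^{2}_{L^{2}(\R^{3})}=2\|\psi\|^{2}_{L^{2}(\R^{3}_{+})}$ and using $\|\nabla P\psi\|_{L^{2}(\R^{3})}=\|\alpha\cdot D(P\psi)\|_{L^{2}(\R^{3})}$ yields the stated equality. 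The hard part, as indicated, is the passage to the test-function formulation and the recognition that $\phi+\mathcal{B}(\phi\circ R)$ automatically satisfies the MIT boundary condition — this is precisely what kills the boundary term and lets the hypothesis $\psi\in\D(H^{\star})$ enter; the remainder is matrix bookkeeping with $\alpha,\beta,\gamma_{5}$ and a change of variables.
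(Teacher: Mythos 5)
Your proof is correct and follows essentially the same route as the paper: the same $\mathcal{B}$-weighted reflection across $\pa\R^3_+$, and the same key step that the symmetrized test function $\phi+\mathcal{B}(\phi\circ R)$ satisfies the MIT boundary condition, so that the hypothesis $\psi\in\D(H^\star)$ can be applied to control the distributional derivative of $P\psi$. The only cosmetic differences are that the paper first diagonalizes $i\beta\alpha_3$ (supersymmetric representation) to motivate the extension and then identifies $\alpha\cdot D(P\psi)$ explicitly by a second change of variables, obtaining the norm equality directly, whereas you first derive an $L^2$ bound giving $P\psi\in H^1(\R^3)^4$ and then identify the derivative almost everywhere after observing $\psi\in H^1(\R^3_+)^4$ --- both variants are valid.
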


\begin{proof}
	The outward-pointing normal $\n$ is equal to $-e_3 = (0,0,-1)^T$ so that the boundary condition is
	\[
		i\beta\alpha_3\psi = \psi\,,
	\]
	on $\pa \R ^3_+$.
	Let us diagonalize the matrix $i\beta\alpha_3$ appearing in the boundary condition. We introduce the matrix
	\[
		T = \frac{1}{\sqrt{2}}\left(
			\begin{array}{cc}
				1_2 & i1_2\\
				i1_2& 1_2
			\end{array}
		\right)\,.
	\]
	We have
	\[
		T\beta T^\star = \left(
			\begin{array}{cc}
				0 & -i1_2\\
				i1_2& 0
			\end{array}
		\right)
		\,,\qquad
		T\alpha_kT^\star = \alpha_k\,,\quad T(i\beta\alpha_{3})T^\star=\left(
			\begin{array}{cc}
				\sigma_3 & 0\\
				0& -\sigma_3
			\end{array}
		\right) =: \mathcal{B}^0\,.
	\]
	Thus we consider $\widetilde H=THT^\star$. The operator $\widetilde H$ is defined by $\widetilde H \psi = \alpha\cdot D \psi $ for any $\psi\in \D(\widetilde H)$ where
	\begin{equation}\label{eq:domsuper}\begin{split}
		\D(\widetilde H) &= \left\{\psi\in H^1(\R^3_+),\;
		\mathcal{B}^0
		\psi = \psi,\mbox{ on }\pa\R^3_+
\right\}
		\\
		&= \left\{\psi\in H^1(\R^3_+),\;\psi^2 = \psi^3 = 0\mbox{ on }\pa\R^3_+
\right\}
	\end{split}\end{equation}
	and $\psi = (\psi^1,\psi^2,\psi^3,\psi^4)^T$. This unitarily equivalent representation of the Dirac operator is called the \emph{supersymmetric representation} (see \cite[Appendix 1.A]{Thaller1992}). This expression of the domain makes more apparent the fact that the MIT bag boundary condition is intermediary between the Dirichlet and Neumann boundary conditions.

	Let us denote by $S:\R^3\to \R^3$ and $\Pi : \R^3 \to \R^3$ the orthogonal  symmetry with respect to $\pa\R ^3_+$ and the orthogonal projection on $\pa \R ^3_+$.	Based on \eqref{eq:domsuper}, we define the extension operator $\widetilde P$ for $\psi\in\D(\widetilde H^\star)$ as follows:
	\[
		\widetilde P \psi (x,y,z) = \left\{
			\begin{array}{ll}
				\psi(x,y,z),\; &\mbox{if }z> 0
				\\
				(\psi^1,-\psi^2,-\psi^3,\psi^4)^T(x,y,-z) = \mathcal{B}^0 \left( {\psi}\circ S\right) (x,y,z),\;&\mbox{if }z< 0
			\end{array}
		\right.
	\]
	for $(x,y,z)\in \R^3$. In other words, we extend $\psi^1, \psi^4$ by symmetry and $\psi^2, \psi^3$ by antisymmetry.

	Let us get back to the standard representation and define the extention operator $P$ for $\psi\in \mathcal{D}(H^\star)$ and $(x,y,z)\in \R^3$ as follows : 
	\[
		P\psi(x,y,z) = T^\star\widetilde {P}T\psi(x,y,z) = 
		\begin{cases}
			\psi(x,y,z),&\mbox{ if }z>0,\\
			\left( \mathcal{B}\circ \Pi\right)  \left( {\psi}\circ S\right) (x,y,z), &\mbox{ if }z<0.
		\end{cases}
	\]
	Since $\mathcal{B}(s)$ is a unitary transformation of $\C^4$ for any $s\in \pa\R_+^3$, we get that
	\[
		\|P\psi\|_{L^2(\R^3)}^2 = 2\|\psi\|^2_{L^2(\R^3_+)}.
	\]
	Let us study $\alpha\cdot D P\psi$ in the distributional sense. We have for $\varphi\in \mathcal{D} = C^\infty_0(\R^3)$ that
	\[\begin{split}
		\braket{\alpha\cdot D P\psi,\varphi}_{\mathcal{D}'\times \mathcal{D}}
		 = \braket{ P\psi,  \alpha\cdot D\varphi}_{\R^3}
		 = \braket{\psi,\alpha\cdot D \varphi}_{\R^3_+} + \braket{	\left( \mathcal{B}\circ \Pi\right) {\psi}\circ S,\alpha\cdot D \varphi}_{\R^3_-}
	\end{split}\]
	where $\braket{\cdot,\cdot}_{\mathcal{D}'\times \mathcal{D}}$ is the distributional bracket on $\R^3$.
	Since $\mathcal{B}$ is Hermitian, we obtain by a change of variables, that
	\[\begin{split}
		&\braket{	\left( \mathcal{B}\circ \Pi\right) {\psi}\circ S,\alpha\cdot D \varphi}_{\R^3_-}
		 = \braket{{\psi}\circ S,	\left( \mathcal{B}\circ \Pi\right) \alpha\cdot D \varphi}_{\R^3_-}
		 \\
		 &
		 \qquad= \braket{{\psi},	-i\left( \mathcal{B}\circ \Pi\right) \left(\alpha_1\pa_x + \alpha_2\pa_y-\alpha_3\pa_z\right){\varphi}\circ S}_{\R^3_+}
		 = \braket{{\psi}, \alpha\cdot D \left( \left( \mathcal{B}\circ \Pi\right) {\varphi}\circ S\right) }_{\R^3_+}.
	\end{split}\]
	Hence, we get
	\[\begin{split}
		\braket{\alpha\cdot D P\psi,\varphi}_{\mathcal{D}'\times \mathcal{D}} = \braket{{\psi}, \alpha\cdot D\left(\varphi + \left( \mathcal{B}\circ \Pi\right){\varphi}\circ S\right)}_{\R^3_+}.
	\end{split}\]
	Let us remark that the function $\varphi + \left( \mathcal{B}\circ \Pi\right){\varphi}\circ S$ belongs to $\D( H)$. Indeed, we have that
	\[
		\left( \mathcal{B}\circ \Pi\right)\left(\varphi + \left( \mathcal{B}\circ \Pi\right){\varphi}\circ S\right)(x,y,0) = \left(\varphi + \left( \mathcal{B}\circ \Pi\right){\varphi}\circ S\right)(x,y,0)
	\]
	for all $(x,y)\in \R^2$. Since $\psi\in \D( H^\star)$, by the Riesz theorem and a change of variable, we have that
	\[\begin{split}
		\braket{\alpha\cdot D P\psi,\varphi}_{\mathcal{D}'\times \mathcal{D}} 
		&= \braket{\alpha\cdot D{\psi},\left(\varphi + 	\left( \mathcal{B}\circ \Pi\right){\varphi\circ S}\right)}_{\R^3_+} 
		\\
		&=  \braket{\alpha\cdot D{\psi},\varphi}_{\R^3_+} +  \braket{	\left( \mathcal{B}\circ \Pi\right){\left(\alpha\cdot D{\psi}\right)\circ S},\varphi}_{\R^3_-}.
	\end{split}\]
	Thus, we obtain that in the distributional sense
	\[
		\alpha\cdot D P\psi = \chi_{\R^3_+}\left(\alpha\cdot D{\psi}\right) + 	\chi_{\R^3_-}\left( \mathcal{B}\circ \Pi\right){\left(\alpha\cdot D{\psi}\right)\circ S}\in L^2(\R^3)
	\]
	so that
	\[
		\|\nabla P\psi\|^2_{L^2(\R^3)} = \|\alpha\cdot D P\psi\|^2_{L^2(\R^3)} = 2\|\alpha\cdot D \psi\|^2_{L^2(\R^3_+)}.
	\]
\end{proof}
\subsubsection{Proof of Point \ref{eq:theo11} of Theorem \ref{theo:selfadj}}
Let us now consider the case of our general $\Omega$. Let us remark that the understanding of the case of the half-space is not sufficient to conclude since curvature effects have to be taken into account. The proof of Lemma \ref{lem.extHS} is just used here as a guideline for the proof of the next proposition.
\begin{proposition}\label{prop.extension}
There exist a constant $C>0$ and an operator
                           \[
				P : \D(H^\star)\to H^1(\R^3)^4
			\]
			such that $P\psi_{|\Omega} = \psi$ and 
			\[
				\|P\psi\|^2_{H^1(\R^3)}\leq C\left(\|\psi\|^2_{L^2(\Omega)} + \|\alpha\cdot D \psi\|^2_{L^2(\Omega)}\right)\,,
			\]
			for all $\psi\in\D(H^\star)$.
\end{proposition}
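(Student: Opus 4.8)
The plan is to reduce the general domain $\Omega$ to the half-space situation treated in Lemma~\ref{lem.extHS} by a partition of unity together with boundary-flattening charts. First I would introduce a finite atlas of the compact boundary $\Gamma$: for each boundary point choose a chart $\Phi_j\colon U_j\to V_j$ sending $\Omega\cap U_j$ diffeomorphically onto an open subset of $\R^3_+$ and $\Gamma\cap U_j$ onto a piece of $\pa\R^3_+$, and add one interior open set $U_0\Subset\Omega$; let $(\chi_j)_{j\ge 0}$ be a smooth partition of unity subordinate to $(U_j)$. The piece $\chi_0\psi$ is compactly supported inside $\Omega$ and extends trivially by zero; the bulk of the work is the boundary pieces $\chi_j\psi$.

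For a boundary piece, the idea is to pull back by $\Phi_j^{-1}$ so that $\chi_j\psi$ becomes a function on (a subset of) $\R^3_+$, apply the reflection construction of Lemma~\ref{lem.extHS}, and push forward again. Two points must be handled carefully. Diagonalizing the boundary matrix $\mathcal B=-i\beta(\alpha\cdot\n)$ must be done pointwise along $\Gamma\cap U_j$: since $\n$ varies smoothly, one gets a smooth unitary-matrix-valued function $T_j(x)$ with $T_j\mathcal B T_j^\star=\mathcal B^0$ locally, after which the boundary condition reads $\psi^2=\psi^3=0$ in the rotated frame (as in~\eqref{eq:domsuper}), and the even/odd reflection across $\{z=0\}$ produces an $H^1$ extension of each chart piece. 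The reflection does not exactly commute with $\alpha\cdot D$ here because the change of variables $\Phi_j$ has a nonconstant Jacobian and $T_j$ has nonzero derivatives; however, these discrepancies only contribute \emph{zeroth-order} (bounded, multiplicative) terms and terms supported away from the boundary, so they do not destroy $H^1$-regularity of the extension — one checks, exactly as in the proof of Lemma~\ref{lem.extHS}, that $\alpha\cdot D$ applied to the extended chart piece is an $L^2$ function, using that $\psi\in\D(H^\star)$ means $\alpha\cdot D\psi\in L^2(\Omega)$ and testing against $C_0^\infty(\R^3)$.

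Then I would set $P\psi=\sum_{j\ge 0}(\text{extension of }\chi_j\psi)$, transported back to the original coordinates. By construction $P\psi_{|\Omega}=\sum_j\chi_j\psi=\psi$, and $P\psi\in H^1(\R^3)^4$ since it is a finite sum of $H^1$ functions. The norm estimate follows by summing the chart-wise bounds: each boundary chart contributes $\lesssim \|\chi_j\psi\|_{L^2}^2+\|\alpha\cdot D(\chi_j\psi)\|_{L^2}^2$, and since $\alpha\cdot D(\chi_j\psi)=\chi_j\,\alpha\cdot D\psi+(\alpha\cdot D\chi_j)\psi$ with $\nabla\chi_j$ bounded, this is $\lesssim \|\psi\|_{L^2(\Omega)}^2+\|\alpha\cdot D\psi\|_{L^2(\Omega)}^2$; the constant $C$ absorbs the finitely many chart constants, the Jacobian bounds, and the $\|\nabla\chi_j\|_\infty$, all of which are finite because $\Gamma$ is compact and regular. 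The main obstacle is the bookkeeping of the curvature/Jacobian terms in the reflection step: one must verify that, although $\alpha\cdot D$ does not exactly anticommute with the reflection after flattening (unlike the flat case where the miracle in Lemma~\ref{lem.extHS} was exact), the extra commutator terms are genuinely of order zero in $\psi$, so that $\alpha\cdot D P\psi$ still lies in $L^2$ without any $H^1$-information on $\psi$ beyond what $\D(H^\star)$ provides. Everything else is routine localization. Once Proposition~\ref{prop.extension} is in hand, $\D(H^\star)\subset H^1(\R^3)^4|_\Omega$ forces the desired regularity; combined with the boundary condition (recovered from the integration-by-parts characterization of $\D(H^\star)$) one gets $\D(H^\star)\subset\D(H)$, hence \eqref{eq.HsH} and Point~\ref{eq:theo11}.
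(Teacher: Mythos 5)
Your overall strategy (localization near $\Gamma$, a $\mathcal{B}$-twisted reflection across the boundary, and the identity $\{u\in L^2(\R^3):\alpha\cdot Du\in L^2(\R^3)\}=H^1(\R^3)$) is the same as the paper's, but there is a genuine gap at the decisive step. After flattening by a chart (or, as the paper does, reflecting along the curved normal via $\phi_s$), the operator you actually control is a \emph{variable-coefficient} Dirac-type operator: the nonconstant Jacobian of $\Phi_j$ modifies the \emph{principal} (first-order) part throughout the chart, not just by bounded zeroth-order terms and not only away from the boundary, as you claim. Consequently, what your construction can deliver is $\widetilde\alpha\cdot D(P\psi)\in L^2$ for some Lipschitz matrix-valued $\widetilde\alpha$, and \emph{not} $\alpha\cdot D(P\psi)\in L^2(\R^3)$. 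The constant-coefficient Fourier identity that closes Lemma \ref{lem.extHS} does not apply to such an operator, so "$H^1$-regularity is not destroyed" is not an argument: at that stage there is no $H^1$ regularity to preserve, producing it is the whole point. This is precisely why the paper's proof of Proposition \ref{prop.extension} contains two further nontrivial steps with no counterpart in your proposal: a G\aa rding-type equivalence of the norm $\|v\|^2+\|\widetilde\alpha\cdot Dv\|^2$ with the $H^1$-norm on $C^\infty_0(U)$ (Step (c)), and a Friedrichs mollification/commutator argument using the Lipschitz continuity of $\widetilde\alpha$ to extend this bound to the whole maximal domain (Step (d)).

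A second, related problem is your phrase that "$\psi\in\D(H^\star)$ means $\alpha\cdot D\psi\in L^2(\Omega)$". That is only the maximal-domain property and is strictly weaker than membership in $\D(H^\star)$. If the verification that the reflected piece has square-integrable Dirac derivative used only this, it would apply to every maximal-domain function and would wrongly prove that the maximal domain is contained in $H^1$; in fact, without the adjoint boundary information the twisted reflection produces a surface distribution across $\Gamma$. Since no trace of $\psi$ is available at this stage, the cancellation must be done weakly: one moves $\alpha\cdot D$ onto the test function and observes that $\varphi+\left(\mathcal{B}\circ\phi_p\right)\varphi\circ\phi_s^{-1}$ (or its chart analogue) satisfies the MIT condition, hence lies in $\D(H)$, so that the boundedness of $\varphi\mapsto\braket{\psi,H\varphi}$ -- the actual content of $\D(H^\star)$ -- can be invoked. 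You gesture at doing this "exactly as in Lemma \ref{lem.extHS}", but in a curved chart one must check that the twisted-reflected test function satisfies the exact boundary condition of $\D(H)$ (this is what forces the paper to reflect along the normal direction and to conjugate by $\mathcal{B}\circ\phi_p$ itself rather than by a chart-dependent unitary $T_j$), and doing so reintroduces the variable-coefficient principal part discussed above. Without these two ingredients the proposal does not yield the proposition.
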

\begin{proof}
Using a partition of unity and the fact that
\[
	\{u\in L^2(\R^3)^4:\; \alpha\cdot D u\in L^2(\R^3)^4\} =  H^1(\R^3)^4,
\]
we are reduced to study the case of a deformed half-space. 
Let us recall the standard tubular coordinates near the boundary of $\Omega$ : 
\[
	\begin{split}
		\eta : &\left( U\cap\partial \Omega \right)\times (-T,T) \longrightarrow U,\\
		&(\x_0,t) \mapsto \x_0 - t\n (\x_0)
	\end{split}
\]
where $T>0$ and $U$ is a bounded open set of $\R^3$. Without loss of generality, we can assume that $\eta$ is a diffeomorphism such that
\[
	\eta(\left( U\cap\partial \Omega \right)\times (0,T)) = \Omega\cap U, 
	\quad 
	\eta (\left( U\cap\partial \Omega \right) \times \{0\}) = \partial \Omega \cap U.
\]
The rest of the proof is divided into four steps: 
\begin{enumerate}[(a)]
	\item \label{stepa}\emph{we introduce a bounded extension operator $P : L^2(U\cap\Omega)\to L^2(U)$,
	}
	\item \label{stepb}\emph{we introduce a map $\tilde \alpha$ which extends the $\alpha$-matrices on $U$ so that, we have 
	\[
		\|\tilde \alpha\cdot DP\psi\|_{L^2(U)}\leq C\left(\|\psi\|^2_{L^2(\Omega\cap U)} + \|\alpha\cdot D \psi\|^2_{L^2(\Omega\cap U)}\right),
	\]
	 for any function $\psi\in \D(H^\star)$ whose support is a compact subset of $U\cap \overline{\Omega}$,
	}
	\item \label{stepc}\emph{we show that the norm $\|\cdot\|_{\mathcal{V}}$ defined on
	\[
	 	\mathcal{V} = \{v\in L^2(U), \; \tilde \alpha\cdot D v\in L^2(U), \, \mathsf{supp} \,v\subset\subset U\}
	\]
	by
	\[
		\|v\|_{\mathcal{V}}^2 = \|v\|_{L^2}^2 + \|\tilde \alpha\cdot D v\|_{L^2}^2
	\]
	is equivalent to the $H^1$ norm on $C^\infty_0(U)$.
	}
	\item \label{stepd}\emph{we deduce by a density argument that $\mathcal{V} \subset H^1_0(U)$},
\end{enumerate}
and the conclusion follows.
\\
\emph{Step \eqref{stepa}.} The following tubular projection and symmetry defined by
\[
\begin{split}
	\phi_s :&\;U \to \Omega\cap U\\
	&\x_0-t\n (\x_0)\mapsto \x_0+t\n (\x_0)\\
	\phi_p : &\;U \to \pa\Omega\cap U\\
	&\x_0-t\n (\x_0)\mapsto \x_0
\end{split}
\]
 are well-defined and regular functions.

Let us denote by $P(\x_0)$ the matrix of the identity map of $\R^3$ from the canonical basis $(e_1,e_2,e_3)$ to the orthonormal basis 
$(\epsilon_1(\x_0),\epsilon_2(\x_0),\n(\x_0))$ defined by
\[
	P(\x_0) = \Mat(\mathsf{Id}, (e_1,e_2,e_3), (\epsilon_1(\x_0),\epsilon_2(\x_0),\n(x_0)))\,,
\]
for any $\x_0\in \pa\Omega \cap U$ where $(\epsilon_1(\x_0),\epsilon_2(\x_0))$ is a basis of the tangent space $T_{\x_0}\pa\Omega$.
Up to taking a smaller $T$, we have that, for any $\x_0\in \pa \Omega \cap U$,
\[
	\jac\;\phi_s(\x_0) = P(\x_0)^{-1}
	\left(
		\begin{array}{ccc}
			1&0&0\\
			0&1&0\\
			0&0&-1
		\end{array}
	\right)
	P(\x_0)\,,
\]
and, for any $\x\in U$,
\begin{equation}\label{eq:detjac}
	\frac{3}{2}\geq |\jac\;\phi_s(\x)| := |\det \jac\;\phi_s(\x)|\geq \frac{1}{2}\,.
\end{equation}
Following the idea of the proof of Lemma \ref{lem.extHS}, we define the extension operator 
\[
	P : L^2(U\cap \Omega)\to L^2(U)
\]
 for $\psi\in L^2(U\cap \Omega)$ and $\x\in U$ as follows: 
\[
	P\psi(\x) = 
	\begin{cases}
		\psi(\x), &\mbox{ if }\x\in U\cap \Omega\,,\\
		\left( \mathcal{B}\circ \phi_p(\x)\right) \psi\circ \phi_s (\x), & \mbox{ if }\x \in U\cap \Omega^c .
	\end{cases}
\]
By \eqref{eq:detjac}, we get that
\[
	\|P\psi\|_{L^2(U)}\leq C \|\psi\|_{L^2(U\cap \Omega)}\,.
\]
\emph{Step \eqref{stepb}.} Let us \emph{extend} the $\alpha$-matrices for $\x\in U$ as follows: 
\[
\widetilde \alpha (\x) = 
\begin{cases}
	(\alpha_1,\alpha_2,\alpha_3)^T\,, &\mbox{ if }\x\in U\cap \Omega,\\
	|\jac \,\phi_s(\x)|\mathcal{B}\circ \phi_p(\x)\left( \left( \jac \,\phi_s(\x)\right) ^{-1}(\alpha_1,\alpha_2,\alpha_3)^T\right) \mathcal{B}\circ \phi_p(\x)\,, &\mbox{ if }\x \in U\cap \Omega^c\,.
\end{cases}
\]
Let us remark that $\widetilde \alpha(\x)$ is a column-vector of three matrices and the above matrix product makes sense as a product in the modulus on the ring of the $4\times 4$ Hermitian matrices.
In particular, we get for $\x_0\in \pa\Omega\cap U$ that
\[
	\begin{split}
		&
		|\jac \,\phi_s(\x_0)|\mathcal{B}\circ \phi_p(\x_0)\left( \left( \jac \,\phi_s(\x_0)\right) ^{-1}(\alpha_1,\alpha_2,\alpha_3)^T\right) \mathcal{B}\circ \phi_p(\x_0)
		\\
		&\quad
		=
		\mathcal{B}(\x_0)\left( 
		P(\x_0)^{-1}
		\left(
		\begin{array}{ccc}
			1&0&0\\
			0&1&0\\
			0&0&-1
		\end{array}
		\right)
		P(\x_0)
		\left(
			\begin{array}{c}
				\alpha_1\\
				\alpha_2\\
				\alpha_3
			\end{array}
		\right)
		\right) \mathcal{B}(\x_0)
		\\
		&\quad
		=
		\mathcal{B}(\x_0)\left( 
		P(\x_0)^{-1}
		\left(
		\begin{array}{ccc}
			1&0&0\\
			0&1&0\\
			0&0&-1
		\end{array}
		\right)
		\left(
			\begin{array}{c}
				\alpha\cdot \epsilon_1(\x_0)\\
				\alpha\cdot \epsilon_2(\x_0)\\
				\alpha\cdot \n(\x_0)
			\end{array}
		\right)
		\right) \mathcal{B}(\x_0)
		\\
		&\quad
		=
		P(\x_0)^{-1}
		\mathcal{B}(\x_0)
		\left(
			\begin{array}{c}
				\alpha\cdot \epsilon_1(\x_0)\\
				\alpha\cdot \epsilon_2(\x_0)\\
				-\alpha\cdot \n(\x_0)
			\end{array}
		\right)
		\mathcal{B}(\x_0)
		\\
		&\quad
		=
		P(\x_0)^{-1}
		\left(
			\begin{array}{c}
				\alpha\cdot \epsilon_1(\x_0)\\
				\alpha\cdot \epsilon_2(\x_0)\\
				\alpha\cdot \n(\x_0)
			\end{array}
		\right)
		=
		\left(
			\begin{array}{c}
				\alpha_1\\
				\alpha_2\\
				\alpha_3
			\end{array}
		\right).
	\end{split}
\]
Hence, the application $\tilde \alpha$ is continuous on $U$. Since it is also a $C^1$-map on both $\overline{\Omega\cap U}$ and $\overline{\Omega^c\cap U}$, we get that $\tilde \alpha$ is a Lipschitz map.
This choice for the extension of $\alpha$ is made in order to get 
\[\widetilde{\alpha}\cdot DP\psi \in L^2(U)\,,\]
 in the sense of distributions. 
Indeed, since $\tilde \alpha$ is Lipschitz, we get that, for $\varphi\in H^1_0(U)$,
 \[
 \begin{split}
 &
 \braket{\widetilde{\alpha}\cdot D P\psi,\varphi}_{H^{-1}(U)\times H^{1}_0(U)}
 :=
	\braket{P\psi,\widetilde{\alpha}\cdot D \varphi}_{U}
 +\braket{P\psi,-i\dive(\widetilde{\alpha}) \varphi}_{U\cap\Omega^c}
 \end{split}\,.
 \] 
For $\x\in U\cap \Omega$, we also have that
\[
(\widetilde \alpha \cdot \nabla \varphi)(\phi_s^{-1}(\x))=|\jac \,\phi_s(\phi_s^{-1}(\x))|\,\left( \mathcal{B}\circ \phi_p\alpha\mathcal{B}\circ \phi_p\right) \cdot \nabla \left( \varphi \circ \phi_s^{-1}\right) (\x)
\]
and thus 
\begin{multline*}
(\widetilde \alpha \cdot \nabla \varphi)(\phi_s^{-1}(\x))=|\jac \,\phi_s(\phi_s^{-1}(\x))|\,\mathcal{B}\circ \phi_p\left( \alpha\cdot \nabla \left((\mathcal{B}\circ \phi_p)  \varphi \circ \phi_s^{-1}\right) \right)  (\x)\\ 
-  |\jac \,\phi_s(\phi_s^{-1}(\x))|\,\mathcal{B}\circ \phi_p\left( \alpha\cdot \nabla (\mathcal{B}\circ \phi_p) \right) \varphi \circ \phi_s^{-1} (\x)\,.
\end{multline*}
We deduce that
\begin{multline*}
	\braket{P\psi, \widetilde \alpha \cdot D \varphi }_{U\cap \Omega^c}= \braket{\psi, \alpha\cdot D \left( \left( \mathcal{B}\circ \phi_p\right)\varphi\circ \phi_s^{-1}\right)}_{U\cap \Omega}\\
 -\braket{\psi, \left(\alpha\cdot D \left( \mathcal{B}\circ \phi_p\right) \right)\varphi\circ \phi_s^{-1}}_{U\cap \Omega}\,.
\end{multline*}
Since $\psi\in \D(H^\star)$ and the function
$
	\varphi +  \left( \mathcal{B}\circ \phi_p\right)\varphi\circ \phi_s^{-1} : \Omega\cap U\to \C^4
$
 belongs to $\D(H)$, we get that
\[
\begin{split}
	\braket{\widetilde{\alpha}\cdot D P\psi, \varphi}_{H^{-1}(U)\times H^{1}_0(U)}
	=
	\braket{\alpha\cdot D \psi, \varphi +  \left( \mathcal{B}\circ \phi_p\right)\varphi\circ \phi_s^{-1}} _{U\cap\Omega}
	+\braket{\psi, R \varphi}_{U\cap\Omega}\,,
\end{split}
\]
where $R$  is a bounded operator from $L^2(U)$ in $L^2(U\cap \Omega)$ defined for all $\varphi\in L^2(U)$ by
\[
	R\varphi = -i\,\dive (\widetilde \alpha) \varphi+  i\left( 
	\alpha \cdot \nabla\left(  \mathcal{B}\circ \phi_p\right) \right) \varphi\circ \phi^{-1}_s.
\]
Then, we obtain by Riesz's theorem that $\widetilde \alpha\cdot D P\psi\in L^2(U)$ and that
\[
		\|\tilde \alpha\cdot DP\psi\|_{L^2(U)}\leq C\left(\|\psi\|^2_{L^2(\Omega)} + \|\alpha\cdot D \psi\|^2_{L^2(\Omega)}\right)\,,
\]
where $C>0$ does not depend on $\psi$.
\\
\emph{Step \eqref{stepc}.}
Let $\varphi\in C^\infty_0(U)$, we have
\[
\begin{split}
	&\|-i\widetilde \alpha\cdot \nabla \varphi\|_{L ^2(U)}^2 
	=\braket{\varphi,(-i\widetilde{\alpha}\cdot \nabla)^2 \varphi}_{U}
 -\braket{\varphi,\,\dive(\widetilde{\alpha}) \left(\widetilde{\alpha}\cdot \nabla\varphi\right)}_{U\cap\Omega^c}
\end{split}
\]
and
\[
	\left(-i\widetilde{\alpha}\cdot \nabla\right)^2 
	= -\sum_{j,k=1}^3\widetilde{\alpha}_j\widetilde{\alpha}_k\pa^2_{jk} 
	+ \left(\widetilde{\alpha}_j\pa_j\widetilde{\alpha}_k\right)\pa_k\,.
\]
Let us define the matrix-valued function $A$ for all $\x\in U$ by
\[
	A(\x) = |\jac\, \phi_s(\x)| (\jac\, \phi_s(\x))^{-1}\chi_{U\cap\Omega^c}(\x) + 1_3\chi_{U\cap \Omega}(\x) = (a_{jk}(\x))_{jk}
\]
and denote by $A_j(\x)$ the $j$-th line of $A(\x).$
We get that, for all $\x\in U$,
\[
\begin{split}
	&
	\widetilde\alpha_j(\x)\widetilde \alpha_k(\x)
	 = \mathcal{B}\circ \phi_p
	 \left(a_{j1}\alpha_1 + a_{j2}\alpha_2 + a_{j3}\alpha_3\right)\left(a_{k1}\alpha_1 + a_{k2}\alpha_2 + a_{k3}\alpha_3\right)
	 \mathcal{B}\circ \phi_p
	 \\
	 &
	 \quad
	 =
	 \left(\sum_{l=1}^3a_{jl}a_{kl}\right)1_4
	 + \mathcal{B}\circ \phi_p
	 \left(
	 \sum_{1\leq l<s\leq 3}\alpha_l\alpha_s\left(a_{jl}a_{ks}-a_{js}a_{kl}\right) 
	\right)
	 \mathcal{B}\circ \phi_p
\end{split}
\]
and
\[
	\sum_{j,k=1}^3\widetilde{\alpha}_j\widetilde{\alpha}_k\pa^2_{jk}
	=
	1_4\sum_{j,k=1}^3 A_jA_k^T\pa^2_{jk}\,.
\]
Since, $AA^T(\x) = 1_3$ for all $\x\in U\cap \partial \Omega $, we get that $\x\mapsto AA^T(\x)$ is a Lipschitzian application on $U$ and
\[
	\sum_{j,k=1}^3\widetilde{\alpha}_j\widetilde{\alpha}_k\pa^2_{jk}
	 = 
	 1_4\, \dive\left(A A^T\nabla\right)
	 -1_4\, \sum_{j,k=1}^3 \left(\pa_jA A^T\right)\pa_k.
\]
Integrating by parts yields
\begin{align*}
	\|-i\widetilde \alpha\cdot \nabla \varphi\|_{L ^2(U)}^2&\geq \|A^T\nabla \varphi\|^2_{L^2(U)}-C\|\varphi\|_{L^2(U)}\|\nabla\varphi\|_{L^2(U)}\\
										&\geq c\|\nabla \varphi\|^2_{L^2(U)}-C\|\varphi\|_{L^2(U)}\|\nabla\varphi\|_{L^2(U)}\,,
\end{align*}
where
\[
	c = \min\{ \inf \, \mathsf{sp} (AA^T(\x)), \,\x\in U\}\,.
\]
Note that $c>0$ by \eqref{eq:detjac}. This ensures that the $H^1$-norm and the $\|\cdot\|_{\mathcal{V}}$-norm are equivalent on $C^\infty_0(U)$.
\\
\emph{Step \eqref{stepd}.}
Let $v\in \mathcal{V}$ and $(\rho_\eps)_\eps$ a mollifier defined for $\x\in \R^3$ by
\[
	\rho_\eps(\x) = \frac{1}{\eps^3}\rho_1\left(\frac{\x}{\eps}\right)\,,
\]
where $\rho_1\in C^\infty_0(\R^3)$, $\supp\,\rho_1\subset B(0,1)$, $\rho_1\geq 0$ and $\|\rho_1\|_{L^1} = 1$.
Let us define $v_\eps = v* \rho_\eps$ for any $\eps>0$. There exists $\eps_0>0$ such that for all $\eps\in(0,\eps_0]$, the function $v_\eps$ belongs to $C^\infty_0(U)$. 
Let us temporarily admit that there exists $C$ independent of $v$  and $\eps$ such that
\begin{equation}\label{eq:density}
	\begin{split}
		&\|v_\eps\|_{\mathcal{V}}\leq C\|v\|_{\mathcal{V}}.
	\end{split}
\end{equation}
Then, Step \eqref{stepc} and the fact that $v_\eps$ converges to $v$ in $L^2(U)$ ensure that $\mathcal{V}\subset H^1_0(U)$ and the result follows.

It remains to prove \eqref{eq:density}.
There exists a constant $C>0$ such that
\[
	\|v_\eps\|_{L^2}\leq C\|v\|_{L^2}
\]
and
\[
	\begin{split}
		\|\widetilde{\alpha}\cdot D v_\eps\|_{L^2}
		&
		\leq 
		\|\widetilde{\alpha}\cdot \nabla v_\eps -\left(\widetilde{\alpha}\cdot \nabla v\right)*\rho_\eps\|_{L^2} 
		+ \|\left(\widetilde{\alpha}\cdot \nabla v\right)*\rho_\eps\|_{L^2}
		\\
		&
		\leq
		\|\widetilde{\alpha}\cdot \nabla v_\eps -\left(\widetilde{\alpha}\cdot \nabla v\right)*\rho_\eps\|_{L^2} 
		+ C\|\widetilde{\alpha}\cdot \nabla v\|_{L^2}\,.
	\end{split}
\]
By integration by parts, we get, for $\x\in U$,
\[\begin{split}
	&
	\widetilde{\alpha}\cdot \nabla v_\eps(\x) -\left(\widetilde{\alpha}\cdot \nabla v\right)*\rho_\eps(\x)
	\\
	&
	=
	\int_{\R^3}\widetilde\alpha(\x)\cdot \left(v(\y)\nabla\rho_\eps(\x-\y)\right) \dx\y
	-\int_{\R^3}\widetilde\alpha(\y)\cdot \nabla v(\y)\rho_\eps(\x-\y)
	\dx\y
	\\
	&
	=
	\int_{\R^3}\left(\widetilde\alpha(\x)-\widetilde\alpha(\y)\right)\cdot \left(v(\y)\nabla\rho_\eps(\x-\y)\right) \dx\y
	+\int_{\R^3}\left(\dive\,\widetilde\alpha(\y)\right)v(\y)\rho_\eps(\x-\y)
	\dx\y\,,
\end{split}\]
and by a change of variable
\[\begin{split}
	&
	\int_{\R^3}\left(\widetilde\alpha(\x)-\widetilde\alpha(\y)\right)\cdot \left(v(\y)\nabla\rho_\eps(\x-\y)\right) \dx\y
	\\
	&
	 = 
	 \int_{\R^3}\frac{\widetilde\alpha(\x)-\widetilde\alpha(\x-\eps \z)}{\eps}\cdot \left(v(\x-\eps \z)\nabla\rho_1(\z)\right) \dx\z\,.
\end{split}\]
Since $\widetilde\alpha$ is Lipschitzian, we get that
\[
	\left\|\int_{\R^3}\frac{\widetilde\alpha(\cdot)-\widetilde\alpha(\cdot-\eps \z)}{\eps}\cdot \left(v(\cdot-\eps \z)\nabla\rho_1(\z)\right) \dx\z\right\|_{L^2}\leq C\|v\|_{L^2}\||\cdot||\nabla \rho_1(\cdot)|\|_{L^1}\,,
\]
and
\[
	\left\|\int_{\R^3}\left(\dive\,\widetilde\alpha(\y)\right)v(\y)\rho_\eps(\cdot-\y)
	\dx\y\right\|_{L^2}\leq C\|v\|_{L^2}\,,
\]
so that \eqref{eq:density} follows.
 \end{proof}

Now we can end the proof of \eqref{eq.HsH}. Thanks to Proposition \ref{prop.extension}, the set $\D(H^\star)$ is included in $H^1(\Omega)$. Hence, for any $\psi\in \D(H^\star)$, the trace of $\psi$ on the set $\pa\Omega$ is well-defined and belongs to $H^{1/2}(\pa\Omega)$. 
	By the definition of $\D(H^\star)$ and an integration by parts, we obtain that, for any $\varphi\in \D(H)$,
	\[\begin{split}
		0 = \braket{\psi,H\varphi}_{\Omega} - \braket{H\psi,\varphi}_{\Omega} = \braket{\psi,-i\alpha\cdot n\varphi}_{\pa\Omega}= \braket{\beta\psi,\varphi}_{\pa\Omega}\,.
	\end{split}\]
	Hence, we have, for almost any $s\in \pa\Omega$,
	\[
		\beta \psi(s)\in \ker(\mathcal{B} - 1_4)^\perp = \ker(\mathcal{B} + 1_4)\,,
	\]
	so that
	\[
		\psi(s)\in \ker(\mathcal{B} - 1_4)\,,
	\]
	and we get \eqref{eq.HsH}.

\subsection{Proof of Point \ref{eq:theo14} in Theorem \ref{theo:selfadj}}

In the following lines, we assume that $\psi\in \D(H)$. First we expand the square to get
\[\|H\psi\|^2_{L^2(\Omega)} = \braket{\alpha\cdot D \psi,\alpha\cdot D \psi}_{\Omega} + m^2\braket{\beta\psi,\beta\psi}_{\Omega} + 2m\RE\braket{\beta\psi,\alpha\cdot D \psi}_{\Omega}\,.\]
Then we use \eqref{eq.IBP0} with $\varphi=\beta\psi$ and we find, by using that $\alpha$ anticommutes with $\beta$,
\[2\RE\braket{\beta\psi,\alpha\cdot D \psi}_{\Omega}=\braket{i\alpha\cdot\n\beta\psi,\psi,}_{\partial\Omega}=\braket{-i\beta\alpha\cdot\n\psi,\psi}_{\partial\Omega}=\|\psi\|^2_{L^2(\partial\Omega)}\,.\]
It remains to use that $\beta$ is unitary to deduce
\begin{equation}\label{eq.square}
\|H\psi\|^2_{L^2(\Omega)}= \|\alpha\cdot D \psi\|_{L^2(\Omega)}^2 + m^2\|\psi\|_{L^2(\Omega)}^2 + m\|\psi\|_{L^2(\pa\Omega)}^2\,.
\end{equation}
Assume moreover that $\psi\in H^2(\Omega)$. 
Then, we again use the Green-Riemann formula \eqref{eq.IBP0} and we have
\[ \braket{\alpha\cdot D \psi,\alpha\cdot D \psi}_{\Omega}=\braket{ \psi,(\alpha\cdot D)^2 \psi}_{\Omega}+\braket{(-i\alpha\cdot\n)\psi,\alpha\cdot D\psi}_{\partial\Omega}\,,\]
and thus, by noticing that $(\alpha\cdot D)^2=1_{4}D^2$, we find, by another integration by parts:
\[ \braket{\alpha\cdot D \psi,\alpha\cdot D \psi}_{\Omega}=\braket{ D\psi,D \psi}_{\Omega}+i\braket{\psi,\left((\alpha\cdot\n)(\alpha\cdot D)-(\n\cdot D)\right)\psi}_{\partial\Omega}\,.\]
Since $H^2(\Omega)$ is dense in $H^1(\Omega)$, we get that this formula holds for any $u\in \D(H)$.
We shall now investigate the boundary term by using the first algebraic relation in \eqref{eq:multalp}:
\[
	\begin{split}
	&
	i\braket{\psi,\left((\alpha\cdot\n)(\alpha\cdot D)-(\n\cdot D)\right)\psi}_{\partial\Omega}=-\braket{\psi,\gamma_{5}\alpha\cdot(\n\times D)\psi}_{\partial\Omega}
	\\
	&
	\qquad
	=-\braket{\gamma_{5}\psi,\alpha\cdot(\n\times D)\psi}_{\partial\Omega}\,.
	\end{split}	
\]
It remains to investigate the term $\braket{\gamma_{5}\psi,\alpha\cdot(\n\times D)\psi}_{\partial\Omega}$.
Since $\psi$ belongs to $\D(H)$, we have	
\[\braket{\gamma_{5}\psi,\alpha\cdot(\n\times D)\psi}_{\partial\Omega}=\braket{\gamma_{5}\psi,[\alpha\cdot(\n\times D),\mathcal{B}]\psi}_{\partial\Omega}+\braket{\gamma_{5}\psi,\mathcal{B}\alpha\cdot(\n\times D)\psi}_{\partial\Omega}\,,\]
and, since $\mathcal{B}$ is a symmetric operator, we get
\[\braket{\gamma_{5}\psi,\mathcal{B}\alpha\cdot(\n\times D)\psi}_{\partial\Omega}=\braket{\mathcal{B}\gamma_{5}\psi,\alpha\cdot(\n\times D)\psi}_{\partial\Omega}=-\braket{\gamma_{5}\mathcal{B}\psi,\alpha\cdot(\n\times D)\psi}_{\partial\Omega}\,.\]
We deduce that
\[\braket{\gamma_{5}\psi,\alpha\cdot(\n\times D)\psi}_{\partial\Omega}=\frac{1}{2}\braket{\gamma_{5}\psi,[\alpha\cdot(\n\times D),\mathcal{B}]\psi}_{\partial\Omega}\,,\]
and, with Lemma \ref{eq.anticurv}, we get
\[i\braket{\psi,\left((\alpha\cdot\n)(\alpha\cdot D)-(\n\cdot D)\right)\psi}_{\partial\Omega}=-\frac{1}{2}\braket{\gamma_{5}\psi,-\kappa\gamma_{5}\psi}_{\partial\Omega}\,,\]
and the conclusion follows.

\section{Large positive mass}\label{sec.3}
This section is devoted to the proofs of Theorems \ref{theo.positive} and \ref{theo.positive2}. For that purpose, one will work with the square of the Dirac operator $H^2$ appearing in Theorem \ref{theo:selfadj} and determine the asymptotic expansions of its lowest eigenvalues.

For $m>0$ and $\psi\in\mathsf{D}  = \{\psi \in H^1(\Omega,\C^4),\;\psi\in\ker\left(\mathcal{B}-1_{4}\right) \mbox{ on }\Gamma\}$, we let
\[Q_{m}(\psi)=\|\nabla\psi\|^2+\int_{\Gamma}\left(m+\frac{\kappa}{2}\right)|\psi|^2  \dx\Gamma\,.\]
In addition, we also define, for $\psi\in H^1_{0}(\Omega,\C^4)$,
\[Q_{\infty}(\psi)=\|\nabla\psi\|^2\,.\]
Let us denote by $(\lambda_j(\mathcal{Q}_m))_{j\geq 1}$ and $(\lambda_j(\mathcal{Q}_\infty))_{j\geq 1}$, the ordered sequence of  eigenvalues related to the operators associated with the quadratic forms $\mathcal{Q}_m$ and $\mathcal{Q}_\infty$. There respective $L^2$-normalized eigenfunctions are denoted by that $\psi_{j,m}$ and $\psi_{j,\infty}$.
\subsection{First non-trivial term in the asymptotic expansion}
Theorem \ref{theo.positive} is a consequence of the following proposition and of Theorem \ref{theo:selfadj}.
\begin{proposition}
For all $j\geq 1$, we have
\[\lim_{m\to+\infty}\lambda_{j}(Q_{m})=\lambda_{j}(Q_{\infty})\,.\]
\end{proposition}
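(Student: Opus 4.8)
The plan is to establish the convergence $\lambda_j(Q_m)\to\lambda_j(Q_\infty)$ by a two-sided comparison, using the min-max characterization of eigenvalues. The upper bound $\limsup_{m\to+\infty}\lambda_j(Q_m)\leq\lambda_j(Q_\infty)$ is the easy direction: since $H^1_0(\Omega,\C^4)\subset\mathsf{D}$ (functions vanishing on $\Gamma$ trivially satisfy $\mathcal{B}\psi=\psi$ on $\Gamma$) and for $\psi\in H^1_0$ the boundary term in $Q_m$ vanishes, we have $Q_m(\psi)=Q_\infty(\psi)$ on that subspace. Feeding a $j$-dimensional subspace of $H^1_0$ that nearly realizes $\lambda_j(Q_\infty)$ into the min-max formula for $Q_m$ gives the bound directly, uniformly in $m$.

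\textbf{The lower bound.} The harder direction is $\liminf_{m\to+\infty}\lambda_j(Q_m)\geq\lambda_j(Q_\infty)$. Here the idea is that large $m$ forces eigenfunctions to become small on $\Gamma$. First I would note that the sequence $(\lambda_j(Q_m))_m$ is nondecreasing in $m$ (by the monotonicity of $Q_m$ in $m$, since $m+\kappa/2>0$ on $\Gamma$ for $m$ large) and bounded above by $\lambda_j(Q_\infty)$, hence converges to some limit $\ell_j\leq\lambda_j(Q_\infty)$. Take the $L^2$-normalized eigenfunctions $\psi_{j,m}$; from $Q_m(\psi_{j,m})=\lambda_j(Q_m)\leq\lambda_j(Q_\infty)$ and the positivity of the boundary term we get that $\|\nabla\psi_{j,m}\|_{L^2(\Omega)}^2$ is uniformly bounded, so $(\psi_{j,m})_m$ is bounded in $H^1(\Omega,\C^4)$, and moreover $\int_\Gamma|\psi_{j,m}|^2\dx\Gamma=O(1/m)\to0$. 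For a fixed $j$, passing to a subsequence, $\psi_{j,m}\rightharpoonup\psi_j$ weakly in $H^1$ and strongly in $L^2(\Omega)$ (Rellich), and the trace operator $H^1(\Omega)\to L^2(\Gamma)$ being compact, $\psi_{j,m}\to\psi_j$ strongly in $L^2(\Gamma)$; combined with the vanishing of the boundary integrals this yields $\psi_j=0$ on $\Gamma$, i.e. $\psi_j\in H^1_0(\Omega,\C^4)$. By strong $L^2(\Omega)$ convergence, $\|\psi_j\|_{L^2(\Omega)}=1$, and by weak lower semicontinuity $Q_\infty(\psi_j)=\|\nabla\psi_j\|^2\leq\liminf_m\|\nabla\psi_{j,m}\|^2\leq\liminf_m Q_m(\psi_{j,m})=\ell_j$.

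\textbf{Concluding via orthogonality.} To upgrade this to a statement about $\lambda_j(Q_\infty)$, I would run the argument simultaneously for $j=1,\dots,N$: the eigenfunctions $\psi_{1,m},\dots,\psi_{N,m}$ are orthonormal in $L^2(\Omega)$, and the strong $L^2(\Omega)$ convergence along a common subsequence produces limits $\psi_1,\dots,\psi_N\in H^1_0(\Omega,\C^4)$ that are still orthonormal, hence span an $N$-dimensional subspace $V_N$ of the form domain of $Q_\infty$. On $V_N$ the Rayleigh quotient of $Q_\infty$ is controlled: for any unit combination $\psi=\sum c_k\psi_k$, using $\|\nabla\psi_k\|^2\leq\ell_k\leq\ell_N$ together with convergence of the cross terms $\langle\nabla\psi_{k,m},\nabla\psi_{l,m}\rangle$ (here one uses that $\psi_{k,m}$ are eigenfunctions so these cross terms vanish for $k\neq l$ up to the boundary correction, which tends to zero), one obtains $Q_\infty(\psi)\leq\ell_N\leq\lambda_N(Q_\infty)$ on all of $V_N$. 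The min-max principle then gives $\lambda_N(Q_\infty)\leq\max_{\psi\in V_N}Q_\infty(\psi)/\|\psi\|^2\leq\ell_N$, so $\ell_N=\lambda_N(Q_\infty)$, completing the proof. The main obstacle is the bookkeeping of the cross terms $\langle\nabla\psi_{k,m},\nabla\psi_{l,m}\rangle_{L^2(\Omega)}$: one must verify these converge to $\langle\nabla\psi_k,\nabla\psi_l\rangle$ (equivalently, that no derivative energy escapes in the limit), which follows from the eigenvalue equation $\langle\nabla\psi_{k,m},\nabla\psi_{l,m}\rangle=\lambda_k(Q_m)\delta_{kl}-\int_\Gamma(m+\kappa/2)\psi_{k,m}\overline{\psi_{l,m}}\dx\Gamma$ once one knows the weighted boundary pairings $m\int_\Gamma\psi_{k,m}\overline{\psi_{l,m}}\dx\Gamma$ stay bounded, which in turn is automatic from boundedness of $\lambda_k(Q_m)$ and positivity.
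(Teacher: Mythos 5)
Your preliminary steps are sound and coincide with the paper's: the upper bound $\lambda_j(Q_m)\le\lambda_j(Q_\infty)$ from $H^1_0\subset\mathsf{D}$, the uniform $H^1$ bound on the normalized eigenfunctions, the estimate $\int_\Gamma|\psi_{j,m}|^2\dx\Gamma=\mathcal{O}(1/m)$ forcing the weak limits $\psi_j$ into $H^1_0(\Omega,\C^4)$, and their $L^2$-orthonormality. The genuine gap is in your concluding step. To run min-max on $V_N=\mathrm{span}(\psi_1,\dots,\psi_N)$ you need to control the cross terms $\langle\nabla\psi_k,\nabla\psi_l\rangle_{L^2(\Omega)}$ of the \emph{limit} functions, and your argument delivers neither of the two facts it relies on. First, the identity you invoke gives, for $k\neq l$, $\langle\nabla\psi_{k,m},\nabla\psi_{l,m}\rangle_{L^2(\Omega)}=-\int_\Gamma(m+\tfrac{\kappa}{2})\psi_{k,m}\overline{\psi_{l,m}}\dx\Gamma$, and \emph{boundedness} of the right-hand side (which is all you prove, and all that positivity gives) does not make it vanish; by Cauchy--Schwarz, making it vanish essentially requires $\int_\Gamma(m+\tfrac{\kappa}{2})|\psi_{k,m}|^2\dx\Gamma=\lambda_k(Q_m)-\|\nabla\psi_{k,m}\|^2\to0$, i.e. $\|\nabla\psi_{k,m}\|^2\to\lambda_k(Q_\infty)$, which is essentially the statement you are trying to prove — the argument is circular at that point. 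Second, even granting that the pre-limit cross terms vanish, you cannot conclude $\langle\nabla\psi_{k,m},\nabla\psi_{l,m}\rangle\to\langle\nabla\psi_k,\nabla\psi_l\rangle$ from weak $H^1$ convergence of \emph{both} factors; you would need strong $H^1$ convergence of at least one of them, and weak lower semicontinuity only gives you $\|\nabla\psi_k\|^2\le\liminf_m\|\nabla\psi_{k,m}\|^2$.

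There are two standard ways to close this. The paper argues by induction on $N$: at each step the squeeze $\lambda_{N+1}(Q_\infty)\ge\lim_m\lambda_{N+1}(Q_m)\ge\liminf_m\|\nabla\psi_{N+1,m}\|^2\ge\|\nabla\psi_{N+1,\infty}\|^2\ge\lambda_{N+1}(Q_\infty)$ is closed because the last inequality uses that $\psi_{N+1,\infty}$ is $L^2$-orthogonal to the previously obtained limits, which the induction hypothesis has identified as genuine Dirichlet eigenfunctions for $\lambda_1(Q_\infty),\dots,\lambda_N(Q_\infty)$; as a by-product each limit is an eigenfunction and the convergence upgrades to strong $H^1$ convergence, which is exactly the ingredient your cross-term bookkeeping was missing. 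Alternatively, you can bypass cross terms entirely: for fixed coefficients with $\sum_k|c_k|^2=1$, form-orthogonality of the $\psi_{k,m}$ and nonnegativity of the boundary term (for $m$ large) give $\bigl\|\nabla\sum_k c_k\psi_{k,m}\bigr\|^2\le Q_m\bigl(\sum_k c_k\psi_{k,m}\bigr)=\sum_k|c_k|^2\lambda_k(Q_m)\le\lambda_N(Q_m)$, and then weak lower semicontinuity applied to the single sequence $\sum_k c_k\psi_{k,m}\rightharpoonup\sum_k c_k\psi_k$ yields $Q_\infty(\psi)\le\ell_N$ for every unit $\psi\in V_N$, after which your min-max conclusion is correct. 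As written, however, the final step of your proposal does not close.
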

\begin{proof}
Since $H^1_{0}(\Omega,\C^4)\subset \mathsf{D}$, we have, for all $n\geq 1$,
\[\lambda_{n}\left(Q_{m}\right)\leq\lambda_{n}\left(Q_{\infty}\right)\,.\]
Let us fix $N\geq 1$ and consider an orthonormal family $(\psi_{j,m})_{1\leq j\leq N}$ such that $\psi_{j,m}$ is an eigenfunction of the operator related to $Q_{m}$ and associated with its $j$-th eigenvalue. We set
\[E_{N}(m)=\mathsf{span}\,(\psi_{j,m})_{1\leq j\leq N}\,. \]
We easily get that, for all $\psi\in E_{N}(m)$,
\[Q_{m}(\psi)\leq\lambda_{N}(Q_{m})\|\psi\|^2\leq\lambda_{N}(Q_{\infty})\|\psi\|^2\,.\]
Let us first prove that $\lambda_{1}(Q_{m})$ converges towards $\lambda_{1}(Q_{\infty})$. 
For that purpose, let us establish that the only accumulation point of $(\lambda_{1}(Q_{m}))_{m\geq 0}$ is $\lambda_{1}(Q_{\infty})$. Since $(\psi_{1,m})$ is bounded in $H^1(\Omega)$, we may assume, up to a subsequence extraction, that $\psi_{1,m}$ converges weakly to $\psi_{1,\infty}\in H^1(\Omega)$. But, we have
\[\int_{\Gamma}|\psi_{1,m}|^2\dx\Gamma=\mathcal{O}(m^{-1})\,,\]
and by the Fatou lemma, $\psi_{1,\infty}=0$ on $\Gamma$ so that $\psi_{1,\infty}\in H^1_{0}(\Omega)$. Then, we get
\[\lambda_{1}(Q_{\infty})\geq\lim_{m\to+\infty}\lambda_{1}(Q_{m})\geq\liminf_{m\to+\infty}\|\nabla\psi_{1,m}\|^2\geq\|\nabla\psi_{1,\infty}\|^2\geq \lambda_{1}(Q_{\infty})\,.\]
We deduce that $\psi_{1,\infty}$ is an eigenfunction of the Dirichlet Laplacian associated with $\lambda_{1}(Q_{\infty})$.
Therefore, we have the convergence result for the first eigenvalue. We also get that $(\psi_{1,m})$ converges to $\psi_{1,\infty}$ strongly in $H^1(\Omega)$.

Let us now proceed by induction. Let $N\geq 1$. Assume that, for all $j\in\{1,\dots,N\}$, $(\lambda_{j}(Q_{m}))$ converges to $\lambda_{j}(Q_{\infty})$ and that, up to a subsequence extraction, $(\psi_{j,m})$ converges to $\psi_{j,\infty}$, an eigenfunction associated with $\lambda_{j}(Q_{\infty})$. As above, we may assume that $(\psi_{N+1,m})$ weakly converges to some $\psi_{N+1,\infty}\in H^1(\Omega)$ and that its trace on $\Gamma$ is zero. We also get, by convergence in $L^2(\Omega)$, that
\[\psi_{N+1,\infty}\in\left(\underset{1\leq j\leq N}{\mathsf{span}}\, \psi_{j,\infty}\right)^{\perp}\,.\]
By the min-max principle, it follows that
\[\lambda_{N+1}(Q_{\infty})\geq\lim_{m\to+\infty}\lambda_{N+1}(Q_{m})\geq\liminf_{m\to+\infty}\|\nabla\psi_{N+1,m}\|^2\geq\|\nabla\psi_{N+1,\infty}\|^2\geq \lambda_{N+1}(Q_{\infty})\,.\]
From these last inequalities, we infer that $\psi_{N+1,\infty}$ is an eigenfunction of the Dirichlet Laplacian associated with $\lambda_{N+1}(Q_{\infty})$, that $(\lambda_{N+1}(\mathcal{Q}_m))$ converges to $(\lambda_{N+1}(\mathcal{Q}_\infty))$ and $(\psi_{N+1,m})$ converges strongly in $H^1(\Omega)$ to $\psi_{N+1,\infty}$.
\end{proof}
\subsection{Asymptotic expansion of the  first eigenvalue }
The following lemma will be used in the proof of Theorem \ref{theo:selfadj}.
\begin{lemma}\label{lem.norm-der-n}
Let $u\in H^1_0(\Omega,\C)$ be an $L^2$-normalized eigenfunction of the Dirichlet Laplacian on $\Omega$. Then
 \[\int_\Gamma |\pa_\n u|^2\n \dx\Gamma = 0\,. \]
\end{lemma}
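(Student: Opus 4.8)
The plan is to prove the identity componentwise: fix $k\in\{1,2,3\}$ and show $\int_\Gamma |\pa_\n u|^2 n_k\,\dx\Gamma = 0$ by a Rellich-type (Pohozaev) argument with the constant vector field $e_k$. Throughout, I use that $u$ lies in the domain of $-\Delta^{\mathsf{Dir}}$, hence $u\in H^2(\Omega)$, and in fact elliptic regularity together with the smoothness of $\Gamma$ makes $u$ smooth up to the boundary, so every trace and integration below is licit.

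First I would record two elementary boundary facts coming from $u=0$ on $\Gamma$: the tangential gradient of $u$ vanishes, so that $\nabla u = (\pa_\n u)\n$ on $\Gamma$; consequently $|\nabla u|^2 = |\pa_\n u|^2$ and $\pa_k u = n_k\,\pa_\n u$ on $\Gamma$. Applying the divergence theorem to the vector field $|\nabla u|^2\,e_k$ then gives
\begin{equation*}
\int_\Gamma |\pa_\n u|^2 n_k\,\dx\Gamma \;=\; \int_\Gamma |\nabla u|^2 n_k\,\dx\Gamma \;=\; \int_\Omega \pa_k\bigl(|\nabla u|^2\bigr)\,\dx\x .
\end{equation*}

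Next I would transform the last integral. Writing $\pa_k(|\nabla u|^2) = 2\,\Re\sum_j \pa_j(\pa_k u)\,\pa_j\bar u$ and integrating by parts in $x_j$, one obtains a bulk term $-\int_\Omega \pa_k u\,\Delta\bar u\,\dx\x = \lambda\int_\Omega \pa_k u\,\bar u\,\dx\x$ (using $-\Delta u=\lambda u$ with $\lambda\in\R$, hence $\Delta\bar u=-\lambda\bar u$) and a boundary term $\int_\Gamma \pa_k u\,\pa_\n\bar u\,\dx\Gamma = \int_\Gamma |\pa_\n u|^2 n_k\,\dx\Gamma$, by the two facts above. One further integration by parts, using $u=0$ on $\Gamma$, shows that $\int_\Omega \pa_k u\,\bar u\,\dx\x$ equals minus its own complex conjugate, hence is purely imaginary, so multiplying by the real number $\lambda$ and taking the real part kills the bulk contribution. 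Combining with the displayed identity, we get $\int_\Gamma |\pa_\n u|^2 n_k\,\dx\Gamma = 2\int_\Gamma |\pa_\n u|^2 n_k\,\dx\Gamma$, whence the integral vanishes.

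There is no serious obstacle here: the computation is essentially a two-step integration by parts. The only point deserving attention is the regularity bookkeeping — knowing $u\in H^2(\Omega)$ (so that $|\nabla u|^2\in W^{1,1}(\Omega)$ and $\nabla u$ has an $L^2$ trace on $\Gamma$, using the three-dimensional Sobolev embedding), which is built into the definition of $-\Delta^{\mathsf{Dir}}$, or simply the $C^\infty$ regularity of $u$ up to $\Gamma$ when the boundary is smooth; everything else is the manipulation of the two boundary identities $\nabla u = (\pa_\n u)\n$ and $\pa_k u = n_k\,\pa_\n u$ on $\Gamma$.
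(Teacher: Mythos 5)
Your argument is correct and is essentially the paper's own proof: the divergence theorem applied componentwise to $|\nabla u|^2$, one integration by parts against $\nabla\pa_k u$, the eigenvalue equation to reduce the bulk term to a quantity that vanishes (you via the purely imaginary character of $\int_\Omega \pa_k u\,\bar u\,\dx\x$, the paper via $\int_\Omega\pa_k|u|^2\dx\x=0$), and the boundary identity $\pa_k u=n_k\pa_\n u$ yielding the relation ``integral $=2\times$ integral''. Your treatment of complex-valued $u$ with explicit real parts is slightly more careful than the paper's, but the route is the same.
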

\begin{proof}
We have $\nabla u = \left(\pa_\n u\right)\n$ so that by integration by parts, we get
	 \[
	 	\begin{split}
			&\int_\Gamma |\pa_\n u|^2\n \dx\Gamma = \int_\Gamma |\nabla u|^2\n \dx\Gamma = \int_\Omega\nabla |\nabla u|^2 \dx\x  = \left(\int_\Omega 2\nabla u\cdot \nabla \pa_k u \dx\x\right)_{k=1,2,3}
			\\
			& \quad =  2\left(\int_\Omega(-\Delta u) \pa_k u\dx\x\ + \int_\Gamma \pa_\n u \pa_k u\dx\Gamma\right)_{k=1,2,3}
			\\
			& \quad =  2\left(\lambda_j(\mathcal{Q}_\infty)/2\int_\Omega\pa_k |u|^2\dx \x + \int_\Gamma \pa_\n u \pa_k u\dx\Gamma\right)_{k=1,2,3} 
			\\
			&\quad = 2\int_\Gamma \pa_\n u \nabla u\dx\Gamma 
			 = 2\int_\Gamma |\pa_\n u|^2\n \dx\Gamma\,,
		\end{split}
	 \]
	 and the conclusion follows.
\end{proof}

Theorem \ref{theo.positive2} is a consequence of the following proposition and of Theorem \ref{theo:selfadj}.
\begin{proposition}
	Let $u_1\in H^1_0(\Omega)$ be an $L^2$-normalized eigenfunction of the Dirichlet Laplacian associated with its lowest eigenvalue $\lambda_1(\mathcal{Q}_\infty)$. 
	We have that
	\[
		\lambda_{1}(\mathcal{Q}_m) = \lambda_1(\mathcal{Q}_\infty) -\frac{1}{2m}\int_{\Gamma}|\pa_\n u_1|^2\dx\Gamma+ \mathcal{O}(m^{-2})\,.	
	\]
\end{proposition}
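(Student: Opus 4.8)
The plan is to establish two-sided bounds on $\lambda_1(\mathcal{Q}_m)$ that match up to $\mathcal{O}(m^{-2})$. The upper bound comes from the min-max principle with a well-chosen test function, and the lower bound from a spectral analysis of the quadratic form using the known convergence of eigenfunctions $\psi_{1,m}\to u_1$ in $H^1(\Omega)$ (from the previous proposition) as an a priori input. The essential point is that the leading correction is of size $1/m$, coming from the boundary term $\int_\Gamma(m+\kappa/2)|\psi|^2\dx\Gamma$ evaluated on a state that is $\mathcal{O}(1/m)$ near the boundary, so one needs genuine first-order information about how $\psi_{1,m}$ fails to vanish on $\Gamma$.

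For the upper bound, I would take a trial state of the form $\psi_m = u_1 + \frac{1}{m}\,v$, where $v$ is a suitable $H^1$ (indeed $H^2$) function that encodes a boundary layer: it should satisfy $\mathcal{B}v = v$ is impossible directly since $u_1=0$ on $\Gamma$, so instead one wants $\psi_m$ itself to lie in $\mathsf{D}$, i.e. $\mathcal{B}\psi_m = \psi_m$ on $\Gamma$. Since $u_1|_\Gamma = 0$, this forces $v|_\Gamma\in\ker(\mathcal{B}-1_4)$; the natural choice is to let $v$ be a fixed extension into $\Omega$ of a boundary spinor proportional to $\pa_\n u_1$ times an eigenvector of $\mathcal{B}$, chosen so that the cross term $2\,\mathrm{Re}\,m\braket{\nabla u_1,\frac{1}{m}\nabla v}_\Omega$ together with the boundary term $m\int_\Gamma \frac{1}{m^2}|v|^2\dx\Gamma = \frac{1}{m}\int_\Gamma|v|^2\dx\Gamma$ produces exactly $-\frac{1}{2m}\int_\Gamma|\pa_\n u_1|^2\dx\Gamma$ after an integration by parts using $-\Delta u_1 = \lambda_1(\mathcal{Q}_\infty)u_1$. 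Then $\mathcal{Q}_m(\psi_m)/\|\psi_m\|^2 \leq \lambda_1(\mathcal{Q}_\infty) - \frac{1}{2m}\int_\Gamma|\pa_\n u_1|^2\dx\Gamma + \mathcal{O}(m^{-2})$, where Lemma~\ref{lem.norm-der-n} and the vanishing of $u_1$ on $\Gamma$ kill the potentially larger cross terms and keep the normalization $\|\psi_m\|^2 = 1 + \mathcal{O}(m^{-2})$.

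For the lower bound I would write $\psi_{1,m} = u_1 + r_m$ with $r_m\to 0$ in $H^1(\Omega)$ (after adjusting the phase), and exploit that $\int_\Gamma|\psi_{1,m}|^2\dx\Gamma = \mathcal{O}(m^{-1})$ so the contribution $\frac{\kappa}{2}\int_\Gamma|\psi_{1,m}|^2$ is $\mathcal{O}(m^{-1})$ and in fact needs to be controlled at order $m^{-2}$, meaning one must show $\int_\Gamma|\psi_{1,m}|^2\dx\Gamma = \mathcal{O}(m^{-2})$ — this follows from the eigenvalue equation: testing $H^2\psi_{1,m}=\lambda_1(\mathcal{Q}_m)\psi_{1,m}$ appropriately, or rather using that $\lambda_1(\mathcal{Q}_m) = \|\nabla\psi_{1,m}\|^2 + m\int_\Gamma|\psi_{1,m}|^2 + \mathcal{O}(m^{-1})$ is bounded, combined with a refined estimate. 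Then $\lambda_1(\mathcal{Q}_m) = \|\nabla\psi_{1,m}\|^2 + (m + \mathcal{O}(1))\int_\Gamma|\psi_{1,m}|^2\dx\Gamma$; expanding $\|\nabla\psi_{1,m}\|^2 = \|\nabla u_1\|^2 + 2\,\mathrm{Re}\braket{\nabla u_1,\nabla r_m}_\Omega + \|\nabla r_m\|^2$ and integrating the cross term by parts to bring in $\pa_\n u_1$ and the boundary trace of $r_m = \psi_{1,m}$, one matches the same quantity $-\frac{1}{2m}\int_\Gamma|\pa_\n u_1|^2\dx\Gamma$ up to $o(m^{-1})$; upgrading $o(m^{-1})$ to $\mathcal{O}(m^{-2})$ requires the quantitative rate on $r_m$ in $H^1$ and on the boundary trace.

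The main obstacle is precisely obtaining the sharp $\mathcal{O}(m^{-2})$ remainder rather than the soft $o(m^{-1})$: this demands (i) a quantitative convergence rate $\|\psi_{1,m}-u_1\|_{H^1(\Omega)} = \mathcal{O}(m^{-1})$ — obtainable by a resolvent/quasi-mode argument, feeding the upper-bound test function back through the spectral gap of the Dirichlet Laplacian — and (ii) the refined boundary estimate $\int_\Gamma|\psi_{1,m}|^2\dx\Gamma = \mathcal{O}(m^{-2})$ together with the identification of its leading coefficient in terms of $\pa_\n u_1$, which is where Lemma~\ref{lem.norm-der-n} and the elliptic regularity of $u_1$ up to the boundary (so that $\pa_\n u_1\in L^2(\Gamma)$ makes sense and the integrations by parts are justified) are used. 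Once both one-sided bounds agree to order $m^{-2}$, the proposition follows, and Theorem~\ref{theo.positive2} is then immediate by combining with $\mu_1(m) = \sqrt{m^2 + \lambda_1(\mathcal{Q}_m)}$ from Theorem~\ref{theo:selfadj} and a Taylor expansion of the square root.
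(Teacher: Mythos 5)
Your upper bound is sound and is essentially the paper's quasi-mode argument (Steps (a)--(b)) in streamlined form: taking $\psi_m=a u_1+m^{-1}v$ with $v|_\Gamma=-P_+(a\,\pa_\n u_1)$, where $P_+$ denotes the pointwise orthogonal projection onto $\ker(\mathcal{B}-1_4)$, the cross term $\frac2m\RE\braket{\nabla (au_1),\nabla v}_\Omega$ and the boundary term $\frac1m\int_\Gamma|v|^2\dx\Gamma$ combine, after one integration by parts, into $-m^{-1}\int_\Gamma|P_+(a\,\pa_\n u_1)|^2\dx\Gamma$, and Lemma \ref{lem.norm-der-n} converts this into $-\frac1{2m}\int_\Gamma|\pa_\n u_1|^2\dx\Gamma$. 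Any fixed $H^1$ extension of this boundary datum suffices at order $\mathcal{O}(m^{-2})$, so you do not even need the Fredholm-alternative corrector that the paper constructs. One imprecision: ``$\pa_\n u_1$ times an eigenvector of $\mathcal{B}$'' should really be the $s$-dependent projection $P_+(s)\bigl(a\,\pa_\n u_1(s)\bigr)$ (the eigenspace $\ker(\mathcal{B}(s)-1_4)$ is two-dimensional and varies with $s$), and it is the optimality of this choice, i.e.\ minimizing $2\RE\braket{a\pa_\n u_1,g}+|g|^2$ over $g\in\ker(\mathcal{B}-1_4)$, that produces the stated coefficient rather than a weaker bound.

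The genuine gap is the lower bound. Your route rests on (i) a rate $\|\psi_{1,m}-au_1\|_{H^1(\Omega)}=\mathcal{O}(m^{-1})$ and (ii) the refined trace estimate $\int_\Gamma|\psi_{1,m}|^2\dx\Gamma=\mathcal{O}(m^{-2})$ together with the identification of its leading coefficient, and none of this is established: it is announced as obtainable ``through the spectral gap of the Dirichlet Laplacian'', but the eigenvalue $\lambda_1(\mathcal{Q}_\infty)$ of the $\C^4$-valued Dirichlet problem has multiplicity four (the limiting direction $a\in\C^4$ may depend on $m$), so any gap argument must be run relative to the whole eigenspace, and the perturbation is a large boundary form, not a bounded operator, so no off-the-shelf resolvent comparison applies; this is exactly where the work lies and the proposal does not supply it. The paper sidesteps rates entirely: applying the Green formula to the two eigenvalue equations and using the boundary condition in the form $\psi_{1,m}|_\Gamma=-(m+\kappa/2)^{-1}P_+\pa_\n\psi_{1,m}$, it obtains the identity \eqref{eq:extract},
\[
(\lambda_{1,m}-\lambda_{1,\infty})\braket{\psi_{1,m},\psi_{1,\infty}}_\Omega=-m^{-1}\braket{(1+\kappa/2m)^{-1}\pa_\n\psi_{1,m},\,P_+\pa_\n\psi_{1,\infty}}_\Gamma\,,
\]
and then only the soft convergences from the preceding proposition (strong $H^1$ convergence of $\psi_{1,m}$ along a subsequence and $H^{-1/2}(\Gamma)$ convergence of the normal traces) are needed to pass to the limit. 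Note that this compactness argument controls the lower bound only at precision $o(m^{-1})$, the $\mathcal{O}(m^{-2})$ remainder being carried by the upper bound; so your instinct that a sharp two-sided $\mathcal{O}(m^{-2})$ bound demands quantitative input is legitimate, but as written your lower-bound half is a plan rather than a proof, and it misses the cheap cross-identity that makes the paper's argument close.
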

\begin{remark}
	In the case of the Robin Laplacian, we obtain
	\[
		\lambda_{1}^{\mathsf{Rob}}(\mathcal{Q}_m) = \lambda_1(\mathcal{Q}_\infty) -\frac{1}{m}\int_{\Gamma}|\pa_\n u_1|^2\dx\Gamma + \mathcal{O}(m^{-2})
	\]
	and we recover asymptotically the fact that $\lambda_1^{\mathsf{Rob}}(\mathcal{Q}_m)\leq \lambda_1(\mathcal{Q}_m)$. 
\end{remark}
\begin{proof}
	The proof of this result is divided into three steps: 
	\begin{enumerate}[(a)]
		\item \label{eq:stepNrl1} we perform a formal study of the asymptotic expansion of $\lambda_1(\mathcal{Q}_m)$,
		\item \label{eq:stepNrl2} we build rigorously a test function based on Step \eqref{eq:stepNrl1} to get the upper bound,
		\item \label{eq:stepNrl3} we study the lower bound.
	\end{enumerate}
	 \emph{Step} \eqref{eq:stepNrl1}. We look for quasi-eigenvalues and quasi-eigenfunctions in the form
	 \[
	 	\begin{split}
			&\lambda^\mathsf{app}_1(\mathcal{Q}_m) = \lambda_1(\mathcal{Q}_{\infty}) + \frac{\lambda}{m} + \mathcal{O}(m^{-2})\,,\\
			& \psi^{\mathsf{app}}_{1,m} = \psi_{1,\infty} + m^{-1}\varphi + \mathcal{O}(m^{-2})\,,
		\end{split}
	 \]
	 where $\lambda$ and $\varphi$ are unknown.

	 We recall that $\psi_{1,m}$ and $\psi_{1,\infty}$ satisfy
	 \[
	 	\begin{split}
			&-\Delta\psi_{1,m} = \lambda_1(\mathcal{Q}_{m})\psi_{1,m}, \mbox{ on } \Omega\,,\\
			&\psi_{1,m} \in \ker (\mathcal{B}-1_4), \mbox{ on } \Gamma\,,\\
			&(\pa_\n + \kappa/2 + m)\psi_{1,m}\in \ker(\mathcal{B}+1_4)\,, \mbox{ on } \Gamma\,.
		\end{split}
	 \]
	and
	 \[
	 	\begin{split}
			&-\Delta \psi_{1,\infty} = \lambda_1(\mathcal{Q}_{\infty}) \psi_{1,\infty}\,, \mbox{ on }\Omega,\\
			&\psi_{1,\infty} = 0\,, \mbox{ on } \Gamma\,.
		\end{split}
	 \]
	 Then, we want that
	 \begin{equation}\label{eq:varphiAsymptDir1}
	 	\begin{split}
			&\left(-\Delta -\lambda_1(\mathcal{Q}_{\infty}) \right) \varphi = \lambda \psi_{1,\infty}, \mbox{ on }\Omega\,,\\
			&\varphi\in \ker (\mathcal{B}-1_4), \mbox{ on }  \Gamma\,,\\
			&\pa_{\n}\psi_{1,\infty} + \varphi\in \ker (\mathcal{B}+1_4), \mbox{ on }  \Gamma\,.
		\end{split}
	 \end{equation}
	 Denoting for all $s\in  \Gamma$, $P_+(s) = \frac{1-\mathcal{B}(s)}{2}$, the orthogonal projection on $\ker (\mathcal{B}-1_4)$, we get that 
	 \[
	 	\begin{split}
			&0 = P_+\left(\pa_{\n}\psi_{1,\infty} + \varphi\right) = P_+\pa_{\n}\psi_{1,\infty} + \varphi\,.
		\end{split}
	 \]
	 Taking the scalar product of equation \eqref{eq:varphiAsymptDir1} with $\psi_{1,\infty}$ and integrating by parts twice, we obtain that 
	 \[
	 	\lambda = -\|P_+\pa_\n\psi_{1,\infty}\|^2_{L^2(\Gamma)}
	 \]
	 and
	  \begin{equation}\label{eq:varphiAsymptDir2}
	 	\begin{split}
			&\left(-\Delta -\lambda_1(\mathcal{Q}_{\infty}) \right) \varphi= \lambda \psi_{1,\infty}, \mbox{ on }\Omega\,,\\
			& \varphi = -P_+\pa_{\n}\psi_{1,\infty}, \mbox{ on }\Gamma\,.
		\end{split}
	 \end{equation}
	 Let us now consider $\lambda$. Note, that for all eigenfunction $\psi_{1,\infty}$ of the Dirichlet Laplacian in $L^2(\Omega,\C^4)$ associated with its lowest eigenvalue $\lambda_1(\mathcal{Q}_{\infty})$, there exists $a\in \C^4$ such that $|a|=1$ and $\psi_{1,\infty} = au_{1}$. Then, we have
	 \[
	 	\begin{split}
			\lambda &= - \frac{1}{2}\int_{\Gamma}|\pa_\n u_1|^2\left(1 + \braket{a,\mathcal{B}a}\right)\dx\Gamma\\
			& = - \frac{1}{2}\int_{\Gamma}|\pa_\n u_1|^2\dx\Gamma - \frac{1}{2}\braket{a,-i\beta \alpha\cdot\left(\int_\Gamma|\pa_\n u_1|^2\n \dx\Gamma\right)a}\,.
		\end{split}
	 \]
	 With Lemma \ref{lem.norm-der-n}, we obtain that 
	\[
		\lambda= - \frac{1}{2}\int_{\Gamma}|\pa_\n u_1|^2\dx\Gamma.
	\]
	 %
	 \\
	 \emph{Step} \eqref{eq:stepNrl2}.
	 Let $\psi_{1,\infty} = a u_1$ be an eigenfunction of the Dirichlet Laplacian associated with $\lambda_1(\mathcal{Q}_\infty)$ and $w\in H^2(\Omega)$ be such that $w = -P_+\pa_\n \psi_{1,\infty}$. Let us study the existence of a solution $\varphi_1$ of equation \eqref{eq:varphiAsymptDir2}. We denote by $(-\Delta)^{-1}$ the inverse of the Dirichlet Laplacian and $v = \varphi_1-w$ so that
	 \[
	 	\begin{split}
			\left(\mathsf{Id} - \lambda_{1}(\mathcal{Q}_\infty)(-\Delta)^{-1}\right)v = (-\Delta)^{-1}\lambda\psi_{1,\infty} - (-\Delta)^{-1}\left(-\Delta -  \lambda_{1}(\mathcal{Q}_\infty)\right)w\,.
		\end{split}
	 \]
	 By the Fredholm alternative, there exists such a function $v$ if and only if 
	 \[
	 	 (-\Delta)^{-1}\left(\lambda\psi_{1,\infty} - \left(-\Delta -  \lambda_{1}(\mathcal{Q}_\infty)\right)w\right) \in \ker\left(\mathsf{Id} -  \lambda_{1}(\mathcal{Q}_\infty)(-\Delta)^{-1}\right)^\perp\,.
	 \]
	 Let $\psi\in \ker\left(\mathsf{Id} -  \lambda_{1}(\mathcal{Q}_\infty)(-\Delta)^{-1}\right)^\perp$. We have by integrations by parts that
	 \[
	 	\begin{split}
			&\braket{\psi,(-\Delta)^{-1}\left(\lambda\psi_{1,\infty} - \left(-\Delta -  \lambda_{1}(\mathcal{Q}_\infty)\right)w\right) }_\Omega
			\\
			&
			\quad
			={ \lambda_{1}(\mathcal{Q}_\infty)}^{-1}\left(\braket{\psi,\lambda_1\psi_{1,\infty}}_\Omega - \braket{\left(-\Delta -  \lambda_{1}(\mathcal{Q}_\infty)\right)\psi,w }_\Omega  -  \braket{\pa_\n \psi, w}_\Gamma\right)
			\\
			&
			\quad
			= \lambda_{1}(\mathcal{Q}_\infty)^{-1}\left(\braket{\psi,\lambda_1\psi_{1,\infty}}_\Omega  +  \braket{\pa_\n \psi, P_+\pa_{\n}\psi_{1,\infty}}_\Gamma\right)\,.
		\end{split}
	 \]
	 Hence, we get
	 \[
	 	\begin{split}
			0 = \braket{\psi_{1,\infty},(-\Delta)^{-1}\left(\lambda\psi_{1,\infty} - \left(-\Delta - \lambda_{1,\infty}\right)w\right) }_\Omega
		\end{split}
	 \]
	 provided that
	 \begin{equation}\label{eq:lambda1}
	 	\lambda = - \int_\Gamma|P_+\left(\pa_{\n}\psi_{1,\infty}\right)|^2\dx\Gamma\,.
	 \end{equation}
	 Let $a,b\in\C^4$ be such that $\braket{a,b} = 0$, $|a| = |b|=1$, $\psi_{1,\infty} = au_1$ and $\psi = bu_1$. We have
	 \[
	 	\begin{split}
			&0 = \braket{\psi,(-\Delta)^{-1}\left(\lambda_1\psi_{1,\infty} - \left(-\Delta -  \lambda_{1}(\mathcal{Q}_\infty)\right)w\right) }_\Omega
		\end{split}
	 \]
	 since
	 \[
	 	0 = \braket{\pa_\n \psi, P_+\pa_{\n}\psi_{1,\infty}}_\Gamma 
		= \frac{1}{2}\braket{b,-i\beta \alpha\cdot \left(
			\int_\Gamma |\pa_\n u_0|^2 \n \dx\Gamma
		\right)a}\,.
	 \]
	 Hence, assuming that \eqref{eq:lambda1} is true, we get that system \eqref{eq:varphiAsymptDir2} has a solution $\varphi_1$. $\psi_{1,\infty} + m^{-1}\varphi_1$ can be used as a test function and we have 
	 \[
	 	\begin{split}
		\mathcal{Q}_m(\psi_{1,\infty} + m^{-1}\varphi_1) 
		&
		= \lambda_1(\mathcal{Q}_{\infty}) 
		+ m^{-1}\left(2\Re\braket{\nabla \psi_{1,\infty},\nabla\varphi_1}_{\Omega}
		+\int_{\Gamma}|\varphi_1|^2\dx\Gamma\right)
		+ \mathcal{O}(m^{-2})
		\\
		&
		 = \lambda_1(\mathcal{Q}_{\infty}) \|\psi_{1,m}\|^2_{L^2}
		- m^{-1}\int_\Gamma|P_+\left(\pa_{\n}\psi_{1,\infty}\right)|^2\dx\Gamma
		+ \mathcal{O}(m^{-2})
		\end{split}
	 \]
	 so that
	 \begin{equation}\label{eq:bornesupNL}
		\lambda_1(\mathcal{Q}_m)\leq \lambda_1(\mathcal{Q}_{\infty})- m^{-1}\int_\Gamma|P_+\left(\pa_{\n}\psi_{1,\infty}\right)|^2\dx\Gamma
		+ \mathcal{O}(m^{-2})\,.
	 \end{equation}
	\\
	\emph{Step} \eqref{eq:stepNrl3}.
	 Let us now study the lower bound. The sequence $(\psi_{1,m})$ is uniformly bounded in $H^1(\Omega)$.
	 We extract a subsequence $(m_k)_{k\in\N}$ such that
	\[
		\underset{m\to +\infty}{\lim \inf}	\;m\left(\lambda_{1,m}-\lambda_{1,\infty}\right) = \lim_{k\to +\infty} m_k\left(\lambda_{1,m_k}-\lambda_{1,\infty}\right)
	\]
	and $(\psi_{1,m_k})_{k\in\N}$ converges strongly in $H^1(\Omega)$ to $\psi_{1,\infty}\in H^1_0(\Omega)$ and $(\pa_\n\psi_{1,m_k})$ converges to $(\pa_\n\psi_{1,\infty})$ in $H^{-1/2}(\Gamma)$.
	Integrating by parts yields
	\begin{equation}\label{eq:extract}
		\begin{split}
			(\lambda_{1,m_k}-\lambda_{1,\infty})\braket{\psi_{1,m_k},\psi_{1,\infty}}_\Omega = -{m_k}^{-1}\braket{(\kappa/2m_k + 1)^{-1}\pa_\n\psi_{m_k,\infty},P_+\pa_\n \psi_{1,\infty}}_\Gamma\,,
		\end{split}
	\end{equation}
	so that by Step \eqref{eq:stepNrl1},
	\[
		\underset{m\to +\infty}{\lim \inf}	\;m\left(\lambda_{1,m}-\lambda_{1,\infty}\right) = - \|P_+\pa_\n \psi_{1,\infty}\|^2_{L^2(\Gamma)} \geq - \frac{1}{2}\int_{\Gamma}|\pa_\n u_1|^2\dx\Gamma 
	\]
	and the result follows.
\end{proof}

\section{Large negative mass: main steps in the proof of Theorem \ref{theo.bis}}\label{sec.4}
In this section, we study the non-relativistic limit $m\to +\infty$ of the nonnegative eigenvalues of the MIT bag Dirac operator $H_{-m}^\Omega$. 
For the sake of readability, we present the main ingredients used in the proof of Theorem \ref{theo.bis}. Part of the ideas are related to recent results about the semiclassical Robin Laplacians (see \cite[Section 7]{HKR15}, \cite{HK-tams} and \cite{KKR16}). The detailed proofs will be given in Section \ref{sec.5}.

\subsection{Semiclassical reformulation and boundary localization}\label{sec.semi}
The main objective of this section is to get boundary localization results of Agmon type.
For that purpose, we will rather consider $\left(H_{-m}^\Omega\right)^2$ and introduce the semiclassical parameter
\[h=m^{-2}\to 0\,.\]

\subsubsection{The semiclassical operator}
In order to lighten the presentation, it will also be more convenient to work with the following operator
\begin{equation}\label{eq.Lh}
	\mathscr{L}_h = h^2((H^\Omega_{-m})^2-m^2 1_4)\,,
\end{equation}
whose domain is given by
\begin{multline*}
\D(\mathscr{L}_h) = \D((H^\Omega_{-m})^2)\\
	= \left\{
	\psi\in H^2(\Omega)\;:\;\psi\in \ker(\mathcal{B}-1_4),\;\left(\partial_{\n} + \frac{\kappa}{2} -h^{-\frac{1}{2}}\right)\psi\in \ker(\mathcal{B}+1_4),\mbox{ on } \Gamma
	\right\}.
\end{multline*}
The associated quadratic $\mathscr{Q}_h$ form is defined by 
\begin{equation}\label{eq:sesformsc}
\forall\psi\in\D(\mathscr{Q}_h)\,,\qquad	\mathscr{Q}_h(\psi) = h^2\|\nabla\psi\|^2_{L^2(\Omega)}+\int_{\Gamma}\left(\frac{\kappa}{2}h^2-h^{\frac{3}{2}}\right)|\psi|^2\dx \Gamma\,,
\end{equation}
where
\[
	\D(\mathscr{Q}_h) = \D(H^\Omega_{-m}) = \left\{
	\psi\in H^1(\Omega)\;:\;\psi\in \ker(\mathcal{B}-1_4)\mbox{ on }\Gamma
	\right\}\,.
\]
In other words, the operator $\mathscr{L}_{h}$ is the semiclassical Laplacian with combined MIT bag condition \emph{and} Robin condition on the boundary.
\subsubsection{Relations between the eigenvalues of $\mathscr{L}_h$ and $H^\Omega_{-m}$}\label{sec:scaleH1}
 Let us describe the relations between the spectra of our operators. Let us recall that the spectrum of $H^\Omega_{-m}$ is discrete, symmetric with respect to $0$ and with pair multiplicity. The spectrum of $H^\Omega_{-m}$ lying in $[-m, m]$ is given by 
\[\left\{\pm\sqrt{h^{-2}\lambda_{n}(h)+h^{-1}} \,:\,n\in\N\setminus\{0\}\,, -h\leq \lambda_{n}(h)\leq 0\right\}\,,\]
where $\lambda_n(h)$ denotes the $n$-th eigenvalue of $\mathscr{L}_{h}$.
Therefore, we shall focus on the study of the negative eigenvalues of $\mathscr{L}_{h}$.

\subsubsection{Localization estimates \textit{\`a la} Agmon} 
The estimates given in Proposition \ref{red.bound} are a consequence of the fact that the Laplacian is a non-negative operator.
\begin{proposition}\label{red.bound}
Let $\epsilon_0\in (0,1)$ and $\gamma\in(0,\sqrt{\eps_{0}})$. There exists $C>0$ such that for any $h\in(0,1]$, any eigenvalue $\lambda\leq -\eps_0h$ of $\mathscr{L}_h$ and any eigenfunction $\psi_h$ of $\mathscr{L}_h$ associated with $\lambda$, we have
\[
	\left\|\psi_h \exp\left({\frac{\gamma d(\cdot,\Gamma)}{h^{1/2}}}\right)\right\|^2_{L^2(\Omega)} + h^{-1}\left|\mathscr{Q}_h\left(\psi_h \exp\left({\frac{\gamma d(\cdot,\Gamma)}{h^{1/2}}}\right)\right)\right|\leq C\|\psi_h\|^2_{L^2(\Omega)}. 
\]
\end{proposition}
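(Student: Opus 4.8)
The plan is to establish the Agmon-type estimate by the standard Agmon commutation trick adapted to the quadratic form $\mathscr{Q}_h$, exploiting the fact that the negative part of $\mathscr{Q}_h$ is entirely concentrated on the boundary $\Gamma$, which is precisely where the weight $\Phi = \gamma d(\cdot,\Gamma)/h^{1/2}$ is stationary (it vanishes on $\Gamma$). First I would fix the Lipschitz function $\Phi_h(\x) = \gamma d(\x,\Gamma)/h^{1/2}$, which satisfies $|\nabla \Phi_h|^2 = \gamma^2/h$ almost everywhere (with $d(\cdot,\Gamma)$ smooth near $\Gamma$, and one may cut it off away from the boundary since there $\Phi_h$ is bounded and only harmless error terms are produced; alternatively work with $\min(d(\cdot,\Gamma),\delta_0)$ for a fixed small $\delta_0$). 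I would then test the eigenvalue equation $\mathscr{L}_h\psi_h = \lambda\psi_h$ against $e^{2\Phi_h}\psi_h$, or equivalently use the IMS-type localization formula for the quadratic form: writing $\psi = \psi_h$ and $\widetilde\psi = e^{\Phi_h}\psi$, one has the identity
\[
	\mathscr{Q}_h(e^{\Phi_h}\psi) - h^2\|\,|\nabla\Phi_h|\,e^{\Phi_h}\psi\|^2_{L^2(\Omega)} = \Re\,\mathscr{Q}_h(\psi, e^{2\Phi_h}\psi) = \lambda\,\|e^{\Phi_h}\psi\|^2_{L^2(\Omega)}\,,
\]
where the last equality uses that $e^{2\Phi_h}\psi\in\D(\mathscr{Q}_h)$ (the weight equals $1$ on $\Gamma$, so the MIT boundary condition is preserved) together with the eigenvalue equation.

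Rearranging this identity gives
\[
	\mathscr{Q}_h(e^{\Phi_h}\psi) + \bigl(|\lambda| - \gamma^2 h\bigr)\|e^{\Phi_h}\psi\|^2_{L^2(\Omega)} \leq |\lambda|\,\|e^{\Phi_h}\psi\|^2_{L^2(\Omega)} - h^2\|\,|\nabla\Phi_h|\,e^{\Phi_h}\psi\|^2 + \mathscr{Q}_h(e^{\Phi_h}\psi) = 0
\]
up to the cut-off error terms; more carefully, since $h^2|\nabla\Phi_h|^2 = \gamma^2 h$ and $\lambda = -|\lambda| \leq -\eps_0 h$, one obtains
\[
	\mathscr{Q}_h(e^{\Phi_h}\psi) + (\eps_0 - \gamma^2)h\,\|e^{\Phi_h}\psi\|^2_{L^2(\Omega)} \leq C h\,\|\psi\|^2_{L^2(\Omega)}\,,
\]
the right-hand side absorbing the errors coming from the region where $d(\cdot,\Gamma)$ has been cut off (there $\Phi_h \leq \gamma\delta_0/h^{1/2}$, but the eigenfunction decay is not yet known there — this is handled by noting the cut-off is supported away from $\Gamma$ where $e^{\Phi_h}\leq e^{\gamma\delta_0/h^{1/2}}$, so one instead organizes the cut-off terms as lower-order using a bootstrap, or simply uses $d(\cdot,\Gamma)$ globally Lipschitz with the gradient bound holding a.e. and no cut-off at all). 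Since $\gamma < \sqrt{\eps_0}$, the constant $\eps_0 - \gamma^2$ is positive, and since $\mathscr{Q}_h(e^{\Phi_h}\psi)$ is bounded below (by the Robin trace term, which is controlled by $h^{3/2}\|e^{\Phi_h}\psi\|^2_{L^2(\Gamma)}$ and absorbed via a trace inequality with small constant at the cost of enlarging $C$), one concludes
\[
	\|e^{\Phi_h}\psi\|^2_{L^2(\Omega)} \leq C\|\psi\|^2_{L^2(\Omega)}\,,
\]
and then, feeding this back, $h^{-1}|\mathscr{Q}_h(e^{\Phi_h}\psi)| \leq C\|\psi\|^2_{L^2(\Omega)}$ as well. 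This is exactly the claimed bound.

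The main obstacle is the rigorous justification of the quadratic-form IMS identity with a merely Lipschitz weight that blows up like $h^{-1/2}$, and in particular the control of the boundary trace term $\int_\Gamma(\tfrac{\kappa}{2}h^2 - h^{3/2})|e^{\Phi_h}\psi|^2\dx\Gamma$ appearing inside $\mathscr{Q}_h(e^{\Phi_h}\psi)$: since $\Phi_h = 0$ on $\Gamma$, this term is simply $\int_\Gamma(\tfrac{\kappa}{2}h^2 - h^{3/2})|\psi|^2\dx\Gamma$, which by the Agmon-free a priori bound $\mathscr{Q}_h(\psi) = \lambda\|\psi\|^2 \leq 0$ combined with a trace estimate $\|\psi\|^2_{L^2(\Gamma)} \leq C(\delta\|\nabla\psi\|^2_{L^2(\Omega)} + \delta^{-1}\|\psi\|^2_{L^2(\Omega)})$ is $\mathcal{O}(h^{1/2})\|\psi\|^2_{L^2(\Omega)}$ — harmless. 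The genuinely delicate point is therefore the density argument allowing one to apply the form identity to the weighted function (approximating $\Phi_h$ by bounded smooth functions $\Phi_h^{(n)} = \min(\Phi_h, n)$, deriving the estimate uniformly in $n$, and passing to the limit by monotone convergence), which is routine but must be done with care because $e^{\Phi_h}\psi$ a priori only lies in $L^2$, not in $H^1$, until the estimate itself is proved.
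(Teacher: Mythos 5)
Your strategy is viable and genuinely different from the paper's. The paper starts from the same weighted identity $\mathscr{Q}_h(e^{\Phi_h}\psi)-h^2\|\psi\nabla e^{\Phi_h}\|^2_{L^2(\Omega)}=\lambda\|e^{\Phi_h}\psi\|^2_{L^2(\Omega)}$, but it never uses a trace inequality: it inserts a quadratic partition of unity $\chi_{1,h,R}^2+\chi_{2,h,R}^2=1$ at distance scale $Rh^{1/2}$ from $\Gamma$, uses that $\mathscr{Q}_h(\chi_{1,h,R}e^{\Phi_h}\psi)\geq 0$ (no boundary term on its support), controls the boundary-layer piece by applying the localization identity once more together with the rough bound $\mathscr{L}_h\geq -h$, and exploits $e^{\Phi_h}\leq e^{\gamma R}$ on the support of $\chi_{2,h,R}$; taking $R$ large closes the argument. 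Your alternative --- observing that $\Phi_h$ vanishes on $\Gamma$, so the only negative contribution to $\mathscr{Q}_h(e^{\Phi_h}\psi)$ is the \emph{unweighted} trace term, which can be pre-controlled from the unweighted eigenvalue identity --- is legitimate and arguably more direct. Also, your closing worries are unnecessary: since $\Omega$ is bounded and $d(\cdot,\Gamma)$ is globally $1$-Lipschitz, $e^{\Phi_h}$ is bounded and Lipschitz for each fixed $h$, so $e^{2\Phi_h}\psi\in\D(\mathscr{Q}_h)$ directly and no truncation or density argument is needed (this is exactly how the paper proceeds); only $|\nabla d|\leq 1$ a.e.\ is used.

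There is, however, one quantitative step that, as written, does not deliver the proposition. You assert that $\int_\Gamma(\tfrac{\kappa}{2}h^2-h^{3/2})|\psi|^2\dx\Gamma$ is $\mathcal{O}(h^{1/2})\|\psi\|^2_{L^2(\Omega)}$ and call this harmless; but feeding a bound of size $h^{1/2}\|\psi\|^2$ into $\mathscr{Q}_h(e^{\Phi_h}\psi)+(\eps_0-\gamma^2)h\|e^{\Phi_h}\psi\|^2\leq 0$ only gives $\|e^{\Phi_h}\psi\|^2\leq Ch^{-1/2}\|\psi\|^2$, which is weaker than the statement. Likewise, the variant ``absorb $h^{3/2}\|\psi\|^2_{L^2(\Gamma)}$ via a trace inequality with small constant'' into the \emph{weighted} form cannot work: with $\|u\|^2_{L^2(\Gamma)}\leq C\bigl(\delta\|\nabla u\|^2+\delta^{-1}\|u\|^2\bigr)$ you are forced to take $\delta=\epsilon h^{1/2}$ to absorb the gradient part into $h^2\|\nabla(e^{\Phi_h}\psi)\|^2$, and the leftover $C\epsilon^{-1}h\|e^{\Phi_h}\psi\|^2$ is of exactly the same order as the good term $(\eps_0-\gamma^2)h\|e^{\Phi_h}\psi\|^2$ with a constant you cannot make small. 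The repair uses your other ingredient correctly: from $\mathscr{Q}_h(\psi)=\lambda\|\psi\|^2\leq 0$ one gets $h^2\|\nabla\psi\|^2\leq Ch^{3/2}\|\psi\|^2_{L^2(\Gamma)}$, and the trace inequality applied to the \emph{unweighted} $\psi$ at scale $\delta=\epsilon h^{1/2}$, with $\epsilon$ small but fixed, then yields $\|\psi\|^2_{L^2(\Gamma)}\leq Ch^{-1/2}\|\psi\|^2_{L^2(\Omega)}$, hence the boundary term is $\mathcal{O}(h)\|\psi\|^2_{L^2(\Omega)}$ and $\mathscr{Q}_h(e^{\Phi_h}\psi)\geq -Ch\|\psi\|^2_{L^2(\Omega)}$. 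With this, both $\|e^{\Phi_h}\psi\|^2\leq C\|\psi\|^2$ and $h^{-1}|\mathscr{Q}_h(e^{\Phi_h}\psi)|\leq C\|\psi\|^2$ follow, and your proof is complete.
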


\subsection{The operator near the boundary}
Relying on Proposition \ref{red.bound}, we introduce the operator near the boundary. 
Given $\delta \in (0,\delta_0)$ (with $\delta_0>0$ small enough), we introduce the $\delta$-neighborhood of the boundary
\begin{equation}\label{eq.delta.neighbor}
\mathcal V_{\delta}=\{\x\in\Omega~:~{\rm dist}(\x,\Gamma)<\delta\}\,,
\end{equation}
and the quadratic form, defined on the variational space
\begin{multline*}
V_{\delta}=\Big\{u\in H^1(\mathcal V_{\delta})~:~u(\x)=0\,\quad\mbox{ for all } \x\in\Omega \mbox{ such that } {\rm dist}(\x,\Gamma)=\delta\\
 \mathrm{ and }\,\quad  \mathcal{B}u=u \mbox{ on }\Gamma\Big\}\,,
\end{multline*}
by the formula
\[\forall u\in V_{\delta}\,,\qquad\mathscr{Q}_{h}^{\{\delta\}}(u)=\int_{\mathcal{V}_{\delta}}|h\nabla u|^2 \dx\x+\int_{\Gamma}\left(\frac{\kappa}{2}h^2-h^{\frac{3}{2}}\right)|u|^2\dx \Gamma\,.\]
We denote  by $\mathscr{L}_{h}^{\{\delta\}}$ the corresponding operator.
\\
\subsubsection{The operator near the boundary in tubular coordinates.} 
Let $\iota$ be the canonical embedding of $\Gamma$ in $\R^3$ and $g$ the induced metrics on $\Gamma$. $(\Gamma,g)$ is a $\mathcal{C}^3$ Riemannian manifold, which we orientate according to the ambient space. Let us introduce the map $\Phi:\Gamma\times(0,\delta)\to\mathcal{V}_{\delta}$ defined by the formula
\begin{equation*}
\Phi(s,t)=\iota(s)-t\n(s)\,.
\end{equation*}
The transformation $\Phi$ is a $\mathcal{C}^3$ diffeomorphism for any $\delta\in(0,\delta_0)$ provided that $\delta_0$ is sufficiently small. The induced metrics on $\Gamma\times(0,\delta)$ is given by
\[G=g\circ (\mathsf{Id}-tL(s))^{2}+\dx t^2\,,\]
where $L(s)=d\n_{s}$ is the second fundamental form of the boundary at $s$.
Let us now describe how our MIT bag - Robin Laplacian is transformed under the change of coordinates. For all $u\in L^{2}(\mathcal V_{\delta})$, we define the pull-back function
\begin{equation}\label{eq:trans-st}
\widetilde u(s,t):= u(\Phi(s,t)).
\end{equation}
For all $u\in H^{1}(\mathcal{V}_{\delta})$, we have
\begin{equation}\label{eq:bc;n}
\int_{\mathcal{V}_{\delta}}|u|^{2}\dx\x=\int_{\Gamma\times(0,\delta)}|\widetilde u(s,t)|^{2}\,\tilde a\dx \Gamma \dx t\,,
\end{equation}
\begin{equation}\label{eq:bc;qf}
\int_{\mathcal{V}_{\delta}}|\nabla u|^{2}\dx\x= \int_{\Gamma\times(0,\delta)} \Big[\langle\nabla_{s} \widetilde u,\tilde g^{-1}\nabla_{s} \widetilde u\rangle +|\partial_{t}\widetilde u|^{2}\Big]\,\tilde a\dx \Gamma\dx t\,.
\end{equation}
where
\[\tilde g=\big(\mathsf{Id}-tL(s)\big)^2\,,\]
and $\tilde a(s,t)= |\tilde g(s,t)|^{\frac{1}{2}}$. Here $\langle\cdot,\cdot\rangle$ is the Euclidean scalar product and $\nabla_{s}$ is the differential on $\Gamma$ seen through the metrics $g$.
Since $L(s)\in \C^{2\times 2}$, we have the exact formula
\begin{equation}\label{eq.Taylor3}
	\tilde{a}(s,t) = 1-t\kappa(s) + t^2K(s)
\end{equation}
where
\[\kappa(s)=\mathsf{Tr}\, L(s)\, \mbox{ and }K(s) =  \det \, L(s).\]
The operator $\mathscr L^{\{\delta\}}_h$ is expressed in $(s,t)$ coordinates as
\[\widetilde{\mathscr L}^{\{\delta\}}_h=-h^2\tilde a^{-1}\nabla_s(\tilde a \tilde g^{-1}\nabla_s)-h^2\tilde a^{-1}\partial_t(\tilde a\partial_t)\,.\]
 In these coordinates, the Robin condition becomes
\[h^2\partial_tu=\left(\frac{\kappa}{2}h^2-h^{\frac 32}\right)u\quad{\rm on}\quad t=0\,.\]
We introduce, for $\delta \in (0,\delta_0)$,
\begin{equation}\label{eq:red-bnd}
\begin{aligned}
&\widetilde{\mathcal V}_\delta=\{(s,t)~:~s\in \Gamma\quad{\rm and}\quad 0<t< \delta\}\,, \\ 
& \widetilde V_\delta =\{u\in H^1(\widetilde{\mathcal V_\delta}),\,u(\cdot,0)\in\ker\left(\mathcal{B}-1_{4}\right),~u(\cdot,\delta)=0\}\,,\\
&\widetilde{\mathcal D}_\delta=\{u\in H^2(\widetilde{\mathcal V_{\delta}})\cap \widetilde V_\delta ~:~\partial_tu(\cdot,0)-\left(\frac{\kappa}{2}-h^{-\frac{1}{2}}\right)u(\cdot,0)\in \ker (\mathcal{B}+1_4)\}\,,\\
&\widetilde{\mathscr{Q}}_{h}^{\{\delta\}}(u)=\int_{\widetilde{\mathcal V_\delta}}\Big(h^2\langle\nabla_{s} u,\tilde g^{-1}\nabla_{s} u\rangle+|h\partial_tu|^2\Big)\tilde a \dx \Gamma\dx t+\int_{\Gamma}\left(\frac{\kappa}{2}h^2-h^{\frac 32}\right) |u(s,0)|^2 \dx \Gamma\,.
\end{aligned}
\end{equation}
The operator $\widetilde{\mathscr L}_h^{\{\delta\}}$ acts on $L^2(\widetilde{\mathcal V}_\delta,\tilde a\dx t\dx \Gamma)$.

Let us denote by $\lambda^{\{\delta\}}_n(h)$ the $n$-th eigenvalue of the corresponding operator $\widetilde{\mathscr L}_h^{\{\delta\}}$. Using smooth cut-off functions, the min-max principle and the Agmon estimates of Proposition \ref{red.bound}, it is then standard to deduce the following proposition (see \cite{H88}).
\begin{proposition}\label{prop:red-bnd}
Let $\epsilon_{0}\in(0,1)$ and $\gamma\in(0,\sqrt{\epsilon_{0}})$. There exist constants $C>0$, $h_0\in(0,1)$ such that, for all $h\in(0,h_0)$, $\delta\in(0,\delta_{0})$, $n\geq 1$ such that $\lambda_{n}(h)\leq-\epsilon_{0}h$,
\begin{equation}\label{complem}
\lambda_n(h)\leq \lambda^{\{\delta\}}_n(h)\leq \lambda_n(h)+C\exp\left(-\gamma  \delta h^{-\frac 12} \right)\,.
\end{equation}
\end{proposition}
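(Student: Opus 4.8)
The plan is to obtain the two inequalities in \eqref{complem} from the min-max characterization of the eigenvalues, after recording that the change of variables $\Phi$ of \eqref{eq:trans-st} induces a unitary map $L^2(\mathcal V_\delta)\to L^2(\widetilde{\mathcal V}_\delta,\tilde a\dx t\dx\Gamma)$ that intertwines $\mathscr L^{\{\delta\}}_h$ with $\widetilde{\mathscr L}^{\{\delta\}}_h$ and sends $V_\delta$ onto $\widetilde V_\delta$; hence $\lambda^{\{\delta\}}_n(h)$ is equally the $n$-th eigenvalue of the operator associated with $\mathscr Q^{\{\delta\}}_h$ on $V_\delta$, and it suffices to compare that operator with $\mathscr L_h$. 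For the lower bound, any $u\in V_\delta$ vanishes in the $H^1$-trace sense on $\{{\rm dist}(\cdot,\Gamma)=\delta\}$, so its extension by $0$ to $\Omega$ lies in $H^1(\Omega)$, still satisfies $\mathcal Bu=u$ on $\Gamma$ (the extension does not touch a neighbourhood of $\Gamma$), hence belongs to $\D(\mathscr Q_h)$, with $\mathscr Q_h(u)=\mathscr Q^{\{\delta\}}_h(u)$ and $\|u\|_{L^2(\Omega)}=\|u\|_{L^2(\mathcal V_\delta)}$. Plugging the span of the first $n$ eigenfunctions of $\mathscr L^{\{\delta\}}_h$ into the min-max principle for $\mathscr L_h$ yields $\lambda_n(h)\le\lambda^{\{\delta\}}_n(h)$ for every $n\ge1$ (no smallness is needed here).

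For the upper bound, fix $\eps_0$ and $\gamma$ as in the statement, choose an auxiliary $\gamma'\in(\gamma,\sqrt{\eps_0})$, and let $n$ be such that $\lambda_n(h)\le-\eps_0 h$. Let $\psi_1,\dots,\psi_n$ be $L^2(\Omega)$-orthonormal eigenfunctions of $\mathscr L_h$ for $\lambda_1(h)\le\dots\le\lambda_n(h)$; each $\psi_j$ obeys the Agmon estimate of Proposition \ref{red.bound} with the weight $\exp\!\big(\gamma'\,{\rm dist}(\cdot,\Gamma)h^{-1/2}\big)$, and a trace inequality applied to $\mathscr Q_h$ gives $|\lambda_n(h)|\le Ch$. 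Pick a \emph{scalar} smooth quadratic partition of unity $(\chi_1,\chi_2)$, $\chi_1^2+\chi_2^2\equiv1$, with $\chi_1\equiv1$ on $\mathcal V_{\delta/2}$, $\mathsf{supp}\,\chi_1\subset\overline{\mathcal V_\delta}$, $\chi_1=0$ on $\{{\rm dist}(\cdot,\Gamma)=\delta\}$, and $\nabla\chi_1,\nabla\chi_2$ supported in $\overline{\mathcal V_\delta}\setminus\mathcal V_{\delta/2}$ with $|\nabla\chi_1|+|\nabla\chi_2|\le C\delta^{-1}$. Since $\chi_1$ is scalar it commutes with $\mathcal B$, so every $\chi_1\psi_j$, extended by $0$, lies in $V_\delta$; and since $\|\chi_1\psi_j-\psi_j\|^2_{L^2(\Omega)}\le\|\psi_j\|^2_{L^2(\Omega\setminus\mathcal V_{\delta/2})}\le C\exp(-\gamma'\delta h^{-1/2})$ by Proposition \ref{red.bound}, the subspace $E_n:=\mathrm{span}(\chi_1\psi_1,\dots,\chi_1\psi_n)\subset V_\delta$ has dimension $n$ once $h$ is small.

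The key computation is the IMS localization identity: for any $w\in\mathrm{span}(\psi_1,\dots,\psi_n)$, because $\chi_1\equiv1$ near $\Gamma$ (so the boundary integrals in $\mathscr Q_h(w)$ and $\mathscr Q^{\{\delta\}}_h(\chi_1 w)$ coincide) and $\chi_2\equiv0$ near $\Gamma$,
\[
\mathscr Q^{\{\delta\}}_h(\chi_1 w)=\mathscr Q_h(w)-h^2\|\nabla(\chi_2 w)\|^2_{L^2(\Omega)}+h^2\Big(\big\||\nabla\chi_1|\,w\big\|^2_{L^2(\Omega)}+\big\||\nabla\chi_2|\,w\big\|^2_{L^2(\Omega)}\Big).
\]
Here $\mathscr Q_h(w)\le\lambda_n(h)\|w\|^2$, the term $-h^2\|\nabla(\chi_2 w)\|^2$ is nonpositive, and the remaining two terms are at most $Ch^2\delta^{-2}\|w\|^2_{L^2(\Omega\setminus\mathcal V_{\delta/2})}\le Ch^2\delta^{-2}e^{-\gamma'\delta h^{-1/2}}\|w\|^2\le Ce^{-\gamma\delta h^{-1/2}}\|w\|^2$, the algebraic factor $h^2\delta^{-2}$ being swallowed by the loss $\gamma'-\gamma>0$ in the exponent. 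Combining with $\|\chi_1 w\|^2_{L^2(\Omega)}\ge\big(1-Ce^{-\gamma'\delta h^{-1/2}}\big)\|w\|^2$ and $|\lambda_n(h)|\le Ch$, the Rayleigh quotient of every $v=\chi_1 w\in E_n$ for $\mathscr Q^{\{\delta\}}_h$ is $\le\lambda_n(h)+Ce^{-\gamma\delta h^{-1/2}}$; the min-max principle for $\mathscr L^{\{\delta\}}_h$ over $E_n$ then gives the right-hand inequality of \eqref{complem}.

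The only genuinely delicate point is the uniform control of the cut-off error: the gradient terms carry a factor $h^2\delta^{-2}$, which is \emph{not} bounded as $\delta\to0$, so one must know that the eigenfunctions with $\lambda_n(h)\le-\eps_0 h$ decay away from $\Gamma$ at a rate \emph{strictly better} than the target rate $\gamma$, in order to absorb this factor (and any other polynomial loss) into the exponential. This is exactly what Proposition \ref{red.bound} supplies, being valid for every $\gamma'<\sqrt{\eps_0}$; everything else — the unitary equivalence $\widetilde{\mathscr L}^{\{\delta\}}_h\simeq\mathscr L^{\{\delta\}}_h$, the stability of the MIT bag condition $\mathcal Bu=u$ under multiplication by the scalar $\chi_1$, and the stability of the Dirichlet condition at $\{{\rm dist}(\cdot,\Gamma)=\delta\}$ — is routine, which is why the statement can be invoked as "standard".
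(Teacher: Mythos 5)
Your overall architecture is exactly the standard one the paper has in mind (it gives no detailed proof, only the reference to \cite{H88}): the lower bound by extending elements of $V_\delta$ by zero and using form monotonicity/min-max, the identification of $\lambda_n^{\{\delta\}}(h)$ with the eigenvalues of $\mathscr Q_h^{\{\delta\}}$ via the unitary change to tubular coordinates, and the upper bound by plugging truncated eigenfunctions $\chi_1\psi_j$ into the min-max principle, with the IMS identity and the Agmon estimates of Proposition \ref{red.bound} controlling the localization error. These steps are all correct as far as they go.

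There is, however, one genuine gap, and it sits precisely at the point the proposition is about, namely uniformity in $n$. Proposition \ref{red.bound} is stated for \emph{individual} eigenfunctions, whereas your key estimate $\|w\|^2_{L^2(\Omega\setminus\mathcal V_{\delta/2})}\le C e^{-\gamma'\delta h^{-1/2}}\|w\|^2$ is asserted for an arbitrary $w$ in the span of $\psi_1,\dots,\psi_n$ with a constant independent of $n$. What you actually get from Proposition \ref{red.bound} and Cauchy--Schwarz (equivalently, a Hilbert--Schmidt bound on $\mathbf 1_{\Omega\setminus\mathcal V_{\delta/2}}P_n$, $P_n$ the spectral projection) is $\|w\|^2_{L^2(\Omega\setminus\mathcal V_{\delta/2})}\le C\,n\,e^{-\gamma'\delta h^{-1/2}}\|w\|^2$, and the cardinality of $\{n:\lambda_n(h)\le-\eps_0h\}$ tends to infinity as $h\to0$. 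The same issue affects your claim that the Gram matrix of $(\chi_1\psi_j)_j$ is close to the identity and your absorption sentence: the statement that the algebraic factor $h^2\delta^{-2}$ (and, implicitly, the factor $n$) is ``swallowed by the loss $\gamma'-\gamma$'' requires $\delta h^{-1/2}$ to dominate $\log$ of these factors, so it is not valid uniformly down to arbitrarily small $\delta$ as written. To close the argument one needs to (i) record a rough polynomial bound $\#\{n:\lambda_n(h)\le-\eps_0h\}=\mathcal O(h^{-N})$ (for instance by comparing $\mathscr Q_h$ with a semiclassical Robin form whose counting function is known to be polynomially bounded), and (ii) perform the absorption in the regime where $\delta h^{-\frac12}$ is large — which is the regime actually used afterwards, $\delta=h^{1/4}$ — or equivalently run the min-max on the matrices $\big(\mathscr Q_h^{\{\delta\}}(\chi_1\psi_j,\chi_1\psi_k)\big)$ and $\big(\langle\chi_1\psi_j,\chi_1\psi_k\rangle\big)$ with entrywise exponential errors and control their operator norms using (i). With these two supplements your proof is complete and coincides with the intended standard argument.
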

In the following, it is sufficient to choose 
\begin{equation}\label{eq:deltacond}\delta=h^{\frac{1}{4}}\,.\end{equation}

\subsection{The rescaled MIT bag operator in boundary coordinates}\label{sec.rescaled}
Looking at the rate of convergence obtained in Proposition \ref{red.bound}, we perform a change of scale in the normal direction that allows us to see something at the limit. 
We introduce the rescaling
\[(s,\tau)=(s,h^{-\frac 12}t)\,,\]
the new semiclassical parameter $\hbar=h^{\frac 1 4}$ and the new weights
\begin{equation}\label{eq:Jac-a'}
\widehat a_\hbar(s,\tau)=\tilde a(s,h^{\frac{1}{2}}\tau)\,,\qquad \widehat g_\hbar(s,\tau)=\tilde g(s,h^{\frac{1}{2}}\tau)\,.
\end{equation}
We  also introduce the parameter 
\begin{equation}\label{eq:Tcond}
	T_\hbar = \delta h^{-\frac 12}  = h^{-\frac 14} = \hbar^{-1}
\end{equation} (see \eqref{eq:deltacond}).
We consider rather the operator
\begin{equation}\label{eq.Lh-hat}
\widehat{\mathscr{L}}_{\hbar}=h^{-1}\widetilde{\mathscr{L}}_{h}\,,
\end{equation}
acting on $L^2({\widehat{\mathcal V}}_{\hbar},\widehat a_\hbar \dx \Gamma \dx\tau)$ and expressed in the rescaled coordinates $(s,\tau)$. 

As in \eqref{eq:red-bnd}, we let
\begin{equation}\label{eq:dom-Lh-hat}
\begin{aligned}
&\widehat{\mathcal V}_{\hbar}=\{(s,\tau)~:~s\in \Gamma~{\rm and}~0<\tau<
 \hbar^{-1}  \}\,, \\
&\widehat V_{\hbar}=\{u\in H^1(\widehat{\mathcal V}_{\hbar};\widehat a_\hbar \dx \Gamma \dx\tau),\,u(\cdot,0)\in\ker\left(\mathcal{B}-1_{4}\right),~u(\cdot,\hbar^{-1})=0\}\,,\\
&\widehat{\mathcal D}_{\hbar}=\{u\in H^2(\widehat{\mathcal V}_{\hbar};\widehat a_\hbar \dx \Gamma \dx\tau)\cap \widehat V_{\hbar}~:~\partial_\tau u(\cdot,0)-\left(\frac{\kappa}{2}\hbar^2-1\right)u(\cdot,0) \in \ker (\mathcal{B}+1_4)\}\,,\\
&\widehat{\mathscr{Q}}_{\hbar}(u)=\int_{\widehat{\mathcal V}_{\hbar}}\Big(\hbar^4\langle\nabla_{s} u,\widehat g^{-1}_\hbar\nabla_{s} u\rangle+|\partial_\tau u|^2\Big)\widehat a_\hbar \dx \Gamma \dx\tau+\int_{\Gamma} \left(\frac{\kappa}{2}\hbar^2-1\right)|u(s,0)|^2\dx \Gamma\,,\\
&\widehat{\mathscr{L}}_\hbar=-\hbar^4 \widehat a^{-1}_\hbar\nabla_s(\widehat a_\hbar \widehat g^{-1}_\hbar\nabla_s)-\widehat a^{-1}_\hbar\partial_\tau\widehat a_\hbar\partial_\tau\,.
\end{aligned}
\end{equation} 
\subsection{Contribution of the normal variable}
Let us notice that the first order terms in \eqref{eq:dom-Lh-hat} are related to the normal variable.
Hence, we are naturally led to introduce the following quadratic form gathering all the terms acting in the normal direction: 
\begin{equation}\label{eq:dom-Lh-hat-first}
\begin{aligned}
	&\widehat V^1_\hbar=\{u\in L^2(\widehat{\mathcal V}_{\hbar};\widehat a_\hbar \dx \Gamma \dx\tau),\, \pa_\tau u\in L^2(\widehat{\mathcal V}_{\hbar};\widehat a_\hbar \dx \Gamma \dx\tau),\,\\
	&\qquad\qquad\qquad\qquad\qquad\qquad u(\cdot,0)\in\ker\left(\mathcal{B}-1_{4}\right),~u(\cdot,\hbar^{-1})=0\}\,,\\
	&\widehat{\mathscr{Q}_{\hbar}^{1 }}(u)=\int_{\Gamma}\Big(\int_0^{\hbar^{-1}}|\partial_\tau u|^2\widehat a_\hbar \dx\tau + \left(\frac{\kappa}{2}\hbar^2-1\right)|u(s,0)|^2\Big)\dx \Gamma\,.
\end{aligned}
\end{equation} 
The goal of this section is to study the lowest part of the spectrum of the operator $\widehat{\mathscr{L}}^1_\hbar$ associated with the quadratic form $\widehat{\mathscr{Q}_{\hbar}^{1 }}$. 
\subsubsection{Diagonalization of the boundary condition}\label{sec.diago-BC}
Without the gradient term in the $s$-direction appearing in $\widehat{\mathscr{Q}}_{\hbar}(u)$, the MIT bag boundary condition can be diagonalized for every $s\in \Gamma$.
Let us introduce for all $s\in \Gamma$, the unitary $4\times 4$ matrix
\[
	P_{\n}:=\frac{1}{\sqrt{2}}\begin{pmatrix}
1_{2}&i\sigma\cdot\n\\
i\sigma\cdot\n&1_{2}
\end{pmatrix}\,.
\]
We have
\[P^{-1}_{\n}\mathcal{B}P_{\n}=\beta\,,\]
so that for all $\psi \in \widehat V_{\hbar}^1$,
\[
	\begin{split}
	&\varphi = P_\n^*\psi\in
	\{u\in L^2(\widehat{\mathcal V}_{\hbar};\widehat a_\hbar \dx \Gamma \dx\tau),\, \pa_\tau u\in L^2(\widehat{\mathcal V}_{\hbar};\widehat a_\hbar \dx \Gamma \dx\tau),\,
	\\
	&\qquad\qquad\qquad\qquad\qquad\qquad\braket{u(\cdot,0),e_3} = \braket{u(\cdot,0),e_4} = 0,~u(\cdot,\hbar^{-1})=0\}\,,
	\end{split}
\]
where $ \braket{u,e_k} $ is the $k$-th component of the vector $u\in \C^4$. 
Since $P_\n$ is unitary and does not depend on the variable $\tau$, we get that
\[
	\widehat{\mathscr{Q}}_{\hbar}^{1 }(u) = \widehat{\mathscr{Q}}_{\hbar}^{1 }(P_\n^*u)\,.
\]
Up to this change of variable, the first two components satisfy the following Robin boundary condition
\[
	\left(\partial_{\tau}+1-\frac{\kappa\hbar^2}{2}\right)u(\cdot,0) =  0\,,
\]
whereas the last two ones satisfy the Dirichlet boundary condition.
\subsubsection{The Robin Laplacian on the half-line}
Let $C_0>0$, $\kappa,K\in (-C_0,C_0)$ and $\hbar_0>0$ such that for all $\hbar\in(0,\hbar_0)$,
\[
	a_{\hbar,\kappa,K}(\tau) = 1-\hbar^2\kappa\tau + \hbar^4K \tau^2\in(-1/2,1/2)\,.
\]
We introduce the following operator in one dimension (and valued in $\C$), defined on the Hilbert space $L^2((0,\hbar^{-1});a_{\hbar,\kappa,K}\dx\tau)$ by
\begin{equation}\label{eq.Dirac1D}
\mathcal{H}^\mathsf{Rob}_{\hbar,\kappa,K}=-a_{\hbar,\kappa,K}^{-1}(\tau)\partial_{\tau}\left(a_{\hbar,\kappa,K}(\tau)\partial_{\tau}\right)=-\partial^2_{\tau}+\frac{\hbar^2 \kappa-2\hbar^4K\tau}{a_{\hbar,\kappa,K}(\tau)}\partial_{\tau}\,,
\end{equation}
with domain
\begin{multline*}
\mathsf{Dom}(\mathcal{H}^\mathsf{Rob}_{\hbar,\kappa,K})
=\{\psi\in H^2((0,\hbar^{-1}),\C) : \psi(\hbar^{-1}) = 
\left(\partial_{\tau}+1-\frac{\hbar^2\kappa}{2}\right)\psi(0)=0\}\,.
\end{multline*}
 For the associated quadratic form $\mathcal{Q}^\mathsf{Rob}_{\hbar,\kappa,K}$, we have,
\[\mathsf{Dom}(\mathcal{Q}^\mathsf{Rob}_{\hbar,\kappa,K})=\{\psi\in H^1((0,\hbar^{-1}),\C)\,, \psi(\hbar^{-1})=0 \}\,,\]
\[\mathcal{Q}^\mathsf{Rob}_{\hbar,\kappa,K}(\psi)=\int_{0}^{\hbar^{-1}} |\partial_{\tau}\psi|^2a_{\hbar,\kappa,K}\dx\tau+\left(-1+\frac{\hbar^2\kappa}{2}\right)|\psi(0)|^2\,.\]
Let us notice that our Robin Laplacian $\mathcal{H}^\mathsf{Rob}_{\hbar,\kappa,K}$ on a weighted space looks like the one introduced by Helffer and Kachmar in \cite{HK-tams}. But, here, we have an additional term $\frac{\kappa\hbar^2}{2}$ in the boundary condition which will have an important impact on the spectrum in the limit $\hbar\to0$. We can also notice that $\left(\mathcal{H}^\mathsf{Rob}_{\hbar,\kappa,K}\right)_{\kappa, K}$ is an analytic family of type (B) in the sense of Kato (see \cite{Kato66}).

\begin{notation}
The function $u_{\hbar,\kappa,K}$ denotes the first positive eigenfunction of $\mathcal{H}^\mathsf{Rob}_{\hbar,\kappa,K}$ normalized in $L^2((0,\hbar^{-1}),a_{\hbar,\kappa,K}\dx \tau)$.
\end{notation}

Let us now describe the bottom of the spectrum of $\mathcal{H}^\mathsf{Rob}_{\hbar,\kappa,K}$ when $\hbar$ goes to 0.

\begin{proposition}\label{prop.approx1D}
The lowest eigenpair $(\lambda_{1}\left(\mathcal{H}^\mathsf{Rob}_{\hbar,\kappa,K}\right), u_{\hbar,\kappa,K})$ of $\mathcal{H}^\mathsf{Rob}_{\hbar,\kappa,K}$ satisfies the following. Let $\eps_{0}\in(0,1)$. There exist $\hbar_{0},C>0$ such that for all $\hbar\in(0,\hbar)$, there holds
\[\left|\lambda_{1}\left(\mathcal{H}^\mathsf{Rob}_{\hbar,\kappa,K}\right)-\left(-1+\hbar^{4}\left(K-\frac{\kappa^2}{4}\right)\right)\right|\leq C\hbar^6\,,\quad \lambda_{2}\left(\mathcal{H}^\mathsf{Rob}_{\hbar,\kappa,K}\right)\geq -\eps_{0}/2\,,\]
and
\[\|u_{\hbar,\kappa,K}-\psi_{0}\|_{H^1((0,\hbar^{-1});a_{\hbar,\kappa,K}\dx\tau)}\leq C\hbar^2\,,\quad\mbox{ where }\quad\psi_{0}(\tau)=\sqrt{2}e^{-\tau}\,.\]
The constants $\hbar_{0},C>0$ do not depend on $\kappa,K$ but depend on $C_0$.
\end{proposition}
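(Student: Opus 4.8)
The plan is to remove the weight $a_{\hbar,\kappa,K}$ by a Liouville-type change of unknown, turning $\mathcal{H}^\mathsf{Rob}_{\hbar,\kappa,K}$ into a Schr\"odinger operator on the flat space $L^2((0,\hbar^{-1}),\dx\tau)$ with an $\hbar$-independent Robin condition at the origin, and then to run elementary perturbation theory around the half-line model operator. For $\psi\in L^2((0,\hbar^{-1}),a_{\hbar,\kappa,K}\dx\tau)$ set $b_{\hbar,\kappa,K}=a_{\hbar,\kappa,K}^{1/2}$ and $\phi=b_{\hbar,\kappa,K}\psi$; since $\|\phi\|_{L^2((0,\hbar^{-1}),\dx\tau)}=\|\psi\|_{L^2((0,\hbar^{-1}),a_{\hbar,\kappa,K}\dx\tau)}$ this is unitary, and a direct computation gives
\[
	\widehat{\mathcal H}_{\hbar,\kappa,K}:=b_{\hbar,\kappa,K}\,\mathcal{H}^\mathsf{Rob}_{\hbar,\kappa,K}\,b_{\hbar,\kappa,K}^{-1}=-\partial_\tau^2+W_{\hbar,\kappa,K},\qquad W_{\hbar,\kappa,K}=\frac{b_{\hbar,\kappa,K}''}{b_{\hbar,\kappa,K}}=\frac{1}{2}\frac{a_{\hbar,\kappa,K}''}{a_{\hbar,\kappa,K}}-\frac{1}{4}\frac{(a_{\hbar,\kappa,K}')^2}{a_{\hbar,\kappa,K}^2},
\]
while, thanks to $a_{\hbar,\kappa,K}(0)=1$ and $a_{\hbar,\kappa,K}'(0)=-\hbar^2\kappa$, the boundary condition $\big(\partial_\tau+1-\frac{\hbar^2\kappa}{2}\big)\psi(0)=0$ becomes \emph{exactly} $(\partial_\tau+1)\phi(0)=0$ (this is the precise role of the $\frac{\hbar^2\kappa}{2}$ term), the Dirichlet condition at $\tau=\hbar^{-1}$ being preserved. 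Using that $a_{\hbar,\kappa,K}$ stays bounded away from $0$ on $(0,\hbar^{-1})$ uniformly, together with $a_{\hbar,\kappa,K}'=O(\hbar^2)$ and $a_{\hbar,\kappa,K}''=2\hbar^4K$, one expands
\[
	W_{\hbar,\kappa,K}(\tau)=\hbar^4\Big(K-\frac{\kappa^2}{4}\Big)+\hbar^6 r_{\hbar,\kappa,K}(\tau),\qquad |r_{\hbar,\kappa,K}(\tau)|\le C(1+\tau)\quad\text{on }(0,\hbar^{-1}),
\]
uniformly in $|\kappa|,|K|<C_0$, the decisive point being that the $\hbar^4$-term is the \emph{constant} $K-\frac{\kappa^2}{4}$. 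Thus $\widehat{\mathcal H}_{\hbar,\kappa,K}$ is isospectral to $\mathcal{H}^\mathsf{Rob}_{\hbar,\kappa,K}$ and is an $O(\hbar^4)$ potential perturbation of $\mathcal H_0^{\{\hbar\}}$, the operator $-\partial_\tau^2$ on $(0,\hbar^{-1})$ with $(\partial_\tau+1)\phi(0)=0$ at $0$ and Dirichlet at $\hbar^{-1}$.

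The model operator $\mathcal H_0=-\partial_\tau^2$ on $L^2(0,+\infty)$ with $(\partial_\tau+1)\phi(0)=0$ has $\mathsf{sp}(\mathcal H_0)=\{-1\}\cup[0,+\infty)$, with simple eigenvalue $-1$ and normalized eigenfunction $\psi_0(\tau)=\sqrt2\,e^{-\tau}$. Dirichlet bracketing (extension by zero) gives $\lambda_n(\mathcal H_0^{\{\hbar\}})\ge\lambda_n(\mathcal H_0)$, hence $\lambda_2(\mathcal H_0^{\{\hbar\}})\ge0$; and a direct computation (solving a transcendental equation, or an Agmon estimate using the $e^{-\tau}$-decay of $\psi_0$) shows that $\mathcal H_0^{\{\hbar\}}$ has exactly one negative eigenvalue $\nu_\hbar=-1+O(e^{-\hbar^{-1}})$, with normalized ground state $\phi_\hbar^0$ satisfying $\|\phi_\hbar^0-\psi_0\|_{H^1(0,\hbar^{-1})}=O(e^{-c\hbar^{-1}})$. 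Since the spectral gap of $\mathcal H_0^{\{\hbar\}}$ above $\nu_\hbar$ is $\ge\frac12$ for $\hbar$ small, second-order perturbation theory for the $O(\hbar^4)$ potential $W_{\hbar,\kappa,K}$ yields, uniformly in $|\kappa|,|K|<C_0$,
\[
	\lambda_1(\widehat{\mathcal H}_{\hbar,\kappa,K})=\nu_\hbar+\braket{\phi_\hbar^0,W_{\hbar,\kappa,K}\phi_\hbar^0}+O\big(\|W_{\hbar,\kappa,K}\phi_\hbar^0\|^2\big)=-1+\hbar^4\Big(K-\frac{\kappa^2}{4}\Big)+O(\hbar^6),
\]
where in the last step one uses $\|\phi_\hbar^0\|=1$, the bound $|r_{\hbar,\kappa,K}|\le C(1+\tau)$ and the exponential localization of $\phi_\hbar^0$ to absorb the $\hbar^6$-remainder, the second-order term being $O(\hbar^8)$. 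Likewise $\widehat{\mathcal H}_{\hbar,\kappa,K}\ge\mathcal H_0^{\{\hbar\}}-\|W_{\hbar,\kappa,K}\|_\infty$ as quadratic forms, whence $\lambda_2(\widehat{\mathcal H}_{\hbar,\kappa,K})\ge-C\hbar^4\ge-\eps_0/2$ for $\hbar$ small (depending on $\eps_0$ and $C_0$). Both spectral estimates of the proposition follow, since $\lambda_j(\mathcal H^\mathsf{Rob}_{\hbar,\kappa,K})=\lambda_j(\widehat{\mathcal H}_{\hbar,\kappa,K})$.

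For the eigenfunction, let $\phi_\hbar^{\mathsf{gs}}=b_{\hbar,\kappa,K}\,u_{\hbar,\kappa,K}$ be the normalized positive ground state of $\widehat{\mathcal H}_{\hbar,\kappa,K}$. Comparing the Riesz spectral projectors $\frac{1}{2i\pi}\oint(\widehat{\mathcal H}_{\hbar,\kappa,K}-z)^{-1}\dx z$ and $\frac{1}{2i\pi}\oint(\mathcal H_0^{\{\hbar\}}-z)^{-1}\dx z$ on a fixed small circle around $-1$ (legitimate by the uniform gap) gives $\|\phi_\hbar^{\mathsf{gs}}-\phi_\hbar^0\|_{H^1(0,\hbar^{-1})}\le C\|W_{\hbar,\kappa,K}\|_\infty=O(\hbar^4)$, hence $\|\phi_\hbar^{\mathsf{gs}}-\psi_0\|_{H^1(0,\hbar^{-1})}=O(\hbar^4)+O(e^{-c\hbar^{-1}})$. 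Finally $u_{\hbar,\kappa,K}-\psi_0=(b_{\hbar,\kappa,K}^{-1}-1)\phi_\hbar^{\mathsf{gs}}+(\phi_\hbar^{\mathsf{gs}}-\psi_0)$; since $|b_{\hbar,\kappa,K}^{-1}-1|\le C\hbar^2(1+\tau)$ and $|\partial_\tau b_{\hbar,\kappa,K}^{-1}|\le C\hbar^2$ on $(0,\hbar^{-1})$, and $\phi_\hbar^{\mathsf{gs}}$ decays like $e^{-\tau}$ up to an exponentially small boundary layer near $\hbar^{-1}$, the first term is $O(\hbar^2)$ in the weighted $H^1$-norm and the second is smaller, which gives $\|u_{\hbar,\kappa,K}-\psi_0\|_{H^1((0,\hbar^{-1});a_{\hbar,\kappa,K}\dx\tau)}\le C\hbar^2$.

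The only substantial step is the reduction in the first paragraph: one has to verify that, once the weight is removed, the leading $O(\hbar^4)$ part of the effective potential is the \emph{constant} $K-\frac{\kappa^2}{4}$ — which is exactly what makes a first-order perturbation argument enough to pin down the $\hbar^4$-coefficient of $\lambda_1$ — and that its remainder, although only $O(\hbar^5)$ in sup-norm on $(0,\hbar^{-1})$, contributes merely $O(\hbar^6)$ once tested against the exponentially localized ground state. Everything else is routine, but one must keep each error term (domain truncation, potential remainder, second-order correction, change of weight) uniform with respect to $\kappa,K\in(-C_0,C_0)$.
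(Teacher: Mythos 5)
Your proof is correct, and it takes a genuinely different route from the paper's. The paper proceeds by a quasimode construction: it posits $\lambda=\lambda_0+\hbar^2\lambda_1+\hbar^4\lambda_2$, $\psi=\psi_0+\hbar^2\psi_1+\hbar^4\psi_2$, solves the resulting hierarchy on $(0,+\infty)$ via the Fredholm alternative (finding $\lambda_0=-1$, $\lambda_1=0$, $\lambda_2=K-\tfrac{\kappa^2}{4}$ and the explicit corrector $\psi_1=\tfrac{\kappa\tau}{\sqrt2}e^{-\tau}$), then gets the distance estimate by cut-offs and the spectral theorem, and obtains the gap bound and the eigenfunction approximation by adapting the appendix of \cite{KKR16} together with Agmon estimates. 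You instead conjugate by the half-density $a_{\hbar,\kappa,K}^{1/2}$: this flattens the measure, and — the nice structural point — since $a(0)=1$ and $a'(0)=-\hbar^2\kappa$, the $\tfrac{\kappa\hbar^2}{2}$ shift in the Robin condition is exactly absorbed, leaving the $\hbar$-independent condition $(\partial_\tau+1)\phi(0)=0$, so that all the $\hbar$- and $(\kappa,K)$-dependence sits in the bounded potential $b''/b=\hbar^4\bigl(K-\tfrac{\kappa^2}{4}\bigr)+O\bigl(\hbar^6(1+\tau)\bigr)$, whose leading part is \emph{constant} (your expansion of $\tfrac12 a''/a-\tfrac14(a'/a)^2$ checks out, uniformly for $|\kappa|,|K|<C_0$ and $0<\tau<\hbar^{-1}$). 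First-order perturbation theory with the truncated model ground state (min-max for the upper bound, a Temple/Kato-type bound for the lower one, the $\hbar^6(1+\tau)$ remainder being tamed by exponential localization) then pins down $\lambda_1$ to $O(\hbar^6)$; Dirichlet bracketing gives $\lambda_2\geq -C\hbar^4$, which is even stronger than the stated $-\eps_0/2$; and your splitting $u-\psi_0=(a^{-1/2}-1)\phi^{\mathsf{gs}}+(\phi^{\mathsf{gs}}-\psi_0)$ yields the $C\hbar^2$ bound, the dominant weight-change term $(a^{-1/2}-1)\phi^{\mathsf{gs}}\approx\hbar^2\tfrac{\kappa\tau}{\sqrt2}e^{-\tau}$ reproducing exactly the paper's corrector $\hbar^2\psi_1$ — a good consistency check. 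What each approach buys: yours makes the uniformity in $\kappa,K$ and the origin of the constant $K-\tfrac{\kappa^2}{4}$ completely transparent and dispenses with corrector construction, while the paper's scheme is the one that generalizes to non-constant effective potentials and dovetails with the machinery used elsewhere in Section 5. The few points you leave as routine (exponential localization of the truncated and perturbed ground states, $H^1$ control in the Riesz-projector comparison) are standard Agmon-type facts of exactly the kind the paper's own proof also invokes without detail.
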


\begin{notation}\label{not.vkK}
In the following, we use $\kappa = \kappa(s) $ and $K = K(s)$ and we let 
\[u_{\hbar,\kappa(s),K(s)}(\tau)=v_{\hbar}(s,\tau)\,,\quad\lambda_{j}\left(\mathcal{H}^\mathsf{Rob}_{\hbar,\kappa(s),K(s)}\right)=\lambda^\mathsf{R}_{j}(s,\hbar)\,.\]
\end{notation}
Considering the asymptotic expansion of the eigenfunction in Proposition \ref{prop.approx1D} leads to the following remark ($v_{\hbar}(s,\tau)$ does not depend very much on $s$ in the semiclassical limit).
\begin{remark}\label{rem.corBO}
We introduce the \enquote{Born-Oppenheimer correction}:
\[
	R_{\hbar}(s) = \|\nabla_{s} v_{\hbar}\|^2_{L^2((0,\hbar^{-1}); \widehat{a}_{\hbar}\dx \tau)}\,.
\]
It can be shown that 
\begin{equation}\label{eq.Rh}
	\|R_{\hbar}\|_{L^\infty(\Gamma)} = \mathcal{O}(\hbar^4)\,,
\end{equation}
by using straightforward adaptations of \cite[Lemma 7.3]{HKR15}. By using \eqref{eq.Rh} and an induction procedure, it is also possible to show the same estimate for the second order derivatives:
\[\sup_{s\in\Gamma}\|\nabla^2_{s} v_{\hbar}\|_{L^2((0,\hbar^{-1}); \widehat{a}_{\hbar}\dx \tau)}=\mathcal{O}(\hbar^2)\,.\]
\end{remark}
\subsubsection{Spectrum of $\widehat{\mathscr{L}}^1_\hbar$}
Since the spectrum of the Dirichlet Laplacian is non-negative, Proposition \ref{prop.approx1D} gives us immediately the following result.
\begin{proposition}\label{prop:specfirstorders}
	Let $\eps_0\in(0,1)$.
	There exist $C,\hbar_0>0$ such that for any $\hbar\in(0,\hbar_0)$, we have
	\[
		\mathsf{sp}(\widehat{\mathscr{L}}^1_\hbar)\subset (-1-C\hbar^4,-1 + C\hbar^4)\cup[-\eps_0,+\infty).
	\]
	The $L^2(\widehat{\mathcal{V}}_\hbar;\widehat a_\hbar \dx \Gamma \dx\tau)^4$- spectral projection $\Pi_\hbar := \chi_{(-1-C\hbar^4,-1 + C\hbar^4)}(\widehat{\mathscr{L}}^1_\hbar)$ satisfies
	\[
		\mathsf{Ran}\, \Pi_\hbar = \{(s,\tau)\in \widehat{\mathcal{V}}_\hbar\mapsto f(s)v_{\hbar}(s,\tau),\, f\in L^2(\Gamma; \dx \Gamma)^4\cap \ker (1_{4}-\mathcal{B})\}\,.
	\]
\end{proposition}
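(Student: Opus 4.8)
The plan is to exploit the fact that the quadratic form $\widehat{\mathscr{Q}}_{\hbar}^{1}$ contains \emph{no tangential derivatives}: for each fixed $s\in\Gamma$ it is a one–dimensional form in the normal variable $\tau$, so $\widehat{\mathscr{L}}^1_\hbar$ is a fibered (direct integral) operator over $\Gamma$, acting on the measurable field of Hilbert spaces $s\mapsto L^2((0,\hbar^{-1});\widehat a_\hbar(s,\cdot)\dx\tau)^4$. First I would perform, fiberwise, the diagonalization of the MIT bag condition of Section \ref{sec.diago-BC}: conjugating by the unitary $P_{\n}(s)$ turns the fiber at $s$ into the orthogonal sum of \emph{two} copies of the half-line Robin Laplacian $\mathcal{H}^{\mathsf{Rob}}_{\hbar,\kappa(s),K(s)}$ (first two components) and \emph{two} copies of the operator $-a^{-1}_{\hbar,\kappa(s),K(s)}\partial_\tau(a_{\hbar,\kappa(s),K(s)}\partial_\tau)$ on $(0,\hbar^{-1})$ with a Dirichlet condition at \emph{both} endpoints (last two components); one checks using \eqref{eq:Jac-a'} that $\widehat a_\hbar(s,\cdot)=a_{\hbar,\kappa(s),K(s)}$, so the first block is literally $\mathcal{H}^{\mathsf{Rob}}_{\hbar,\kappa(s),K(s)}$ and the second block is a nonnegative operator.

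Next I would read off the fiber spectrum from Proposition \ref{prop.approx1D}. Since $\Gamma$ is compact and of class $\mathcal{C}^3$, the functions $\kappa$ and $K$ are bounded, say by $C_0$, so the constants $\hbar_{0},C$ of Proposition \ref{prop.approx1D} apply \emph{uniformly} in $s\in\Gamma$; hence $\lambda^{\mathsf R}_1(s,\hbar)=-1+\hbar^4\bigl(K(s)-\tfrac{\kappa(s)^2}{4}\bigr)+\mathcal{O}(\hbar^6)\in(-1-C\hbar^4,-1+C\hbar^4)$ (after enlarging $C$), while $\lambda^{\mathsf R}_2(s,\hbar)\geq-\eps_0/2$ for all $s$, and the Dirichlet block is nonnegative. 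Therefore each fiber has spectrum contained in $(-1-C\hbar^4,-1+C\hbar^4)\cup[-\eps_0,+\infty)$, and since the spectrum of a direct integral is the essential union of the fiber spectra, hence contained in the closure of their union, the stated inclusion for $\mathsf{sp}(\widehat{\mathscr{L}}^1_\hbar)$ follows.

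For the spectral projection I would define $\Pi_\hbar$ fiberwise: at $s$ it is the spectral projection of the fiber operator onto the window $(-1-C\hbar^4,-1+C\hbar^4)$, which by the previous step is the rank-two orthogonal projection onto $\mathrm{span}\, v_\hbar(s,\cdot)$ placed in the first two (diagonalized) components, i.e. onto $\{\tilde g(s)\,v_\hbar(s,\tau)\,:\,\tilde g(s)\in\ker(\beta-1_4)\}$. These fiber projections assemble into the bounded self-adjoint operator $\chi_{(-1-C\hbar^4,-1+C\hbar^4)}(\widehat{\mathscr{L}}^1_\hbar)=\Pi_\hbar$. Undoing the conjugation by $P_{\n}$ and using $P_{\n}^{-1}\mathcal{B}P_{\n}=\beta$, so that $P_{\n}(s)\ker(\beta-1_4)=\ker(\mathcal{B}-1_4)$, together with the fact that $v_\hbar$ is scalar-valued (hence $P_{\n}(s)$ commutes with multiplication by $v_\hbar(s,\tau)$), yields
\[
\mathsf{Ran}\,\Pi_\hbar=\{(s,\tau)\mapsto f(s)\,v_\hbar(s,\tau)\,:\,f\in L^2(\Gamma;\dx\Gamma)^4\cap\ker(1_4-\mathcal{B})\}\,,
\]
which is the asserted description.

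The main things to be careful about are the \emph{uniformity in $s$} of Proposition \ref{prop.approx1D} (which is precisely what its statement provides, given that $\Gamma$ is compact so $\kappa,K$ stay in a fixed bounded range) and the soft functional-analytic bookkeeping behind the fibered picture: measurability of the field $s\mapsto L^2((0,\hbar^{-1});\widehat a_\hbar(s,\cdot)\dx\tau)$, the fact that the fiber ground-state projections depend measurably on $s$ — indeed $\mathcal{C}^1$-smoothly, since $(\mathcal{H}^{\mathsf{Rob}}_{\hbar,\kappa,K})_{\kappa,K}$ is an analytic family of type (B) and $(\kappa,K)\in\mathcal{C}^1(\Gamma)$ — and hence glue into a bounded projection commuting with $\widehat{\mathscr{L}}^1_\hbar$, and that $\widehat{\mathscr{L}}^1_\hbar$ is genuinely the direct integral of the fiber operators (no tangential coupling). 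None of this is deep, but it is what turns the two displayed conclusions into rigorous statements, and it is the part I would write out with care.
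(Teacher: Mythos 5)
Your proposal is correct and follows essentially the same route as the paper, which obtains the result directly from the diagonalization of the boundary condition in Section \ref{sec.diago-BC} (two Robin fibers $\mathcal{H}^{\mathsf{Rob}}_{\hbar,\kappa(s),K(s)}$ plus a nonnegative Dirichlet block), the uniform-in-$(\kappa,K)$ estimates of Proposition \ref{prop.approx1D}, and the explicit fiberwise projection recorded in Remark \ref{rem.comPi}. Your write-up merely makes the direct-integral bookkeeping (including the identification $\widehat a_\hbar(s,\cdot)=a_{\hbar,\kappa(s),K(s)}$ and the enlargement of $C$) explicit, which is exactly what the paper leaves implicit.
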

\begin{remark}\label{rem.comPi}
	Since, $s\mapsto v_{\hbar}(s,\cdot)$ is regular, we also have 
	\[
		\Pi_\hbar\psi \in H^1(\widehat{\mathcal{V}}_\hbar;\widehat a_\hbar \dx \Gamma \dx\tau)^4
	\]
	 for any $\psi\in \widehat{V}_\hbar$. Actually, we can give an explicit expression of $\Pi_{\hbar}$ by using the diagonalization of the MIT condition of Section \ref{sec.diago-BC}:
	 \begin{equation}\label{eq.projection}
	 \Pi_{\hbar}\psi=v_{\hbar}P_{\mathbf{n}}\left(\begin{array}{cc}1_2&0\\0&0\end{array}\right)P^*_{\mathbf{n}}\langle\psi, v_{\hbar}\rangle_{L^2((0,\hbar^{-1}),\widehat{a}_\hbar\dx \tau)}\,,
	 \end{equation}
	 where $\langle\psi, v_{\hbar}\rangle_{L^2((0,\hbar^{-1}),\widehat{a}_\hbar\dx \tau)}=(\langle\psi_{j},v_{\hbar}\rangle_{L^2((0,\hbar^{-1}),\widehat{a}_\hbar\dx \tau)})_{j\in\{1,\dots,4\}}$. By taking the derivative of \eqref{eq.projection} with respect to $s$, by using the Leibniz formula and \eqref{eq.Rh}, we have the commutator estimate, for $\psi\in\widehat{V}_{\hbar}$,
	 \[\|[\nabla_{s},\Pi_{\hbar}]\psi\|_{L^2(\widehat{\mathcal{V}}_\hbar,\widehat{a}\dx \tau \dx \Gamma)}\leq C\|\psi\|_{L^2(\widehat{\mathcal{V}}_\hbar,\widehat{a}\dx \tau \dx \Gamma)}\,.\]
\end{remark}
\subsection{Effective operator on $\mathsf{Ran}\, \Pi_\hbar$}
In this section, we compare the lower part of the spectrum of the operator $\widehat{\mathscr{L}}_\hbar$ with the one of the operator $\widehat{\mathscr{L}}_\hbar^{\mathsf{eff}}$ acting on
$\mathsf{Ran}\, \Pi_\hbar\,,$
 whose quadratic form gathers all the terms of orders lower or equal to $4$ and which is defined by
\begin{equation}\label{eq:dom-Lh-hat-second}
\begin{aligned}
	&\widehat V^\mathsf{eff}_\hbar=\{u\in H^1(\widehat{\mathcal V}_{\hbar};\widehat a_\hbar \dx \Gamma \dx\tau)^4,\,u(\cdot,0)\in\ker\left(\mathcal{B}-1_{4}\right),~u(\cdot,\hbar^{-1})=0\}\cap \mathsf{Ran}\, \Pi_\hbar\,,\\
	&\widehat{\mathscr{Q}_{\hbar}^{\mathsf{eff}}}(u) = 
		\widehat{\mathscr{Q}}_{\hbar}^{1 }(u)
		+\hbar^4\int_{\widehat{\mathcal V}_{\hbar}}
			|\nabla_s u|^2\widehat a_\hbar
			\dx\Gamma \dx \tau\,.
\end{aligned}
\end{equation} 
We get the following result.
\begin{theorem}\label{thm:approx1}
	For $\eps_{0}\in(0,1)$, $\hbar>0$, we let
	\[
		\widehat{\mathcal{N}}_{\epsilon_{0}, \hbar}=\{n\in\mathbb{N}^* : \widehat{\lambda}_{n}(\hbar)\leq-\eps_{0}\}\,.
	\]
	There exist positive constants $\hbar_{0}, C$ such that, for all $\hbar\in(0,\hbar_{0})$ and $n\in\widehat{\mathcal{N}}_{\eps_{0}, \hbar}$,
 	\begin{equation}
		\widehat{\lambda}_{n}^-(\hbar)\leq\widehat{\lambda}_{n}(\hbar)\leq \widehat{\lambda}_{n}^+(\hbar)\,,
	\end{equation}
	where $\widehat{\lambda}^{\pm}_n(\hbar)$ is the $n$-th eigenvalue of $\widehat{\mathscr{L}}^{\mathsf{eff}, \pm}_{\hbar}$ whose quadratic form is defined for all $u\in\widehat V^\mathsf{eff,\pm}_\hbar=\widehat V^\mathsf{eff}_\hbar$ by
	\[
	\widehat{\mathscr{Q}}_{\hbar}^{\mathsf{eff,\pm}}(u) = 
		\widehat{\mathscr{Q}}_{\hbar}^{1 }(u)
		+\hbar^4\int_{\widehat{\mathcal V}_{\hbar}}
			(1\pm C\hbar)|\nabla_s u|^2\widehat a_\hbar
			\dx\Gamma \dx \tau
			\pm C \hbar^6\int_{\widehat{\mathcal V}_{\hbar}}
			|u|^2\widehat a_\hbar
			\dx\Gamma \dx \tau
			\,.
	\]

\end{theorem}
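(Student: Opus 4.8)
The plan is to perform a Born--Oppenheimer (Feshbach--Grushin) reduction onto $\mathsf{Ran}\,\Pi_\hbar$, in the spirit of \cite{HKR15,KKR16,HK-tams}. Write every $\psi\in\widehat V_\hbar$ as $\psi=u+w$ with $u=\Pi_\hbar\psi$, $w=(1-\Pi_\hbar)\psi$. Since $\Pi_\hbar$ is a spectral projection of the operator $\widehat{\mathscr L}^1_\hbar$ attached to $\widehat{\mathscr Q}^1_\hbar$, the latter form is block--diagonal, $\widehat{\mathscr Q}^1_\hbar(\psi)=\widehat{\mathscr Q}^1_\hbar(u)+\widehat{\mathscr Q}^1_\hbar(w)$, and Proposition \ref{prop:specfirstorders} yields the spectral gap $\widehat{\mathscr Q}^1_\hbar(w)\geq-\eps_0\|w\|^2$ together with $\widehat{\mathscr Q}^1_\hbar(u)\in\big[(-1-C\hbar^4)\|u\|^2,(-1+C\hbar^4)\|u\|^2\big]$. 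The only coupling between $u$ and $w$ inside $\widehat{\mathscr Q}_\hbar$ is the tangential cross term $2\hbar^4\,\mathrm{Re}\int_{\widehat{\mathcal V}_\hbar}\langle\nabla_s u,\widehat g_\hbar^{-1}\nabla_s w\rangle\widehat a_\hbar\dx\Gamma\dx\tau$; dominating it is the heart of the matter.

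\emph{Upper bound.} By the min--max principle for $\widehat{\mathscr L}_\hbar$, testing only on $n$--dimensional subspaces of $\widehat V^{\mathsf{eff}}_\hbar=\widehat V_\hbar\cap\mathsf{Ran}\,\Pi_\hbar$ (which are in the form domain by Remark \ref{rem.comPi}), it is enough to show $\widehat{\mathscr Q}_\hbar(u)\leq\widehat{\mathscr Q}^{\mathsf{eff},+}_\hbar(u)$ for $u\in\widehat V^{\mathsf{eff}}_\hbar$. Since $\widehat g_\hbar=(1-h^{1/2}\tau L(s))^2$ and $h^{1/2}\tau\leq h^{1/2}\hbar^{-1}=\hbar$ on $\widehat{\mathcal V}_\hbar$, one has $\widehat g_\hbar^{-1}\leq(1+C\hbar)1_3$, hence $\hbar^4\langle\nabla_s u,\widehat g_\hbar^{-1}\nabla_s u\rangle\leq(1+C\hbar)\hbar^4|\nabla_s u|^2$, which gives the inequality (the extra term $C\hbar^6\int|u|^2\widehat a_\hbar$ in $\widehat{\mathscr Q}^{\mathsf{eff},+}_\hbar$ only helps), and the min--max principle applied to $\widehat{\mathscr L}^{\mathsf{eff},+}_\hbar$ yields $\widehat\lambda_n(\hbar)\leq\widehat\lambda^+_n(\hbar)$.

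\emph{Lower bound.} Let $V_n$ be the span of the first $n$ eigenfunctions of $\widehat{\mathscr L}_\hbar$; for $n\in\widehat{\mathcal N}_{\eps_0,\hbar}$ every $\psi\in V_n$ satisfies $\widehat{\mathscr Q}_\hbar(\psi)\leq\widehat\lambda_n(\hbar)\|\psi\|^2\leq-\eps_0\|\psi\|^2$. The gap first forces $\Pi_\hbar|_{V_n}$ to be injective: if $\psi=w$, then $\widehat{\mathscr Q}_\hbar(\psi)\geq\widehat{\mathscr Q}^1_\hbar(w)$, which by the strict bound of Proposition \ref{prop.approx1D} on the excited normal modes exceeds $-\eps_0\|\psi\|^2$ unless $\psi=0$; hence $F_n:=\Pi_\hbar V_n\subset\widehat V^{\mathsf{eff}}_\hbar$ has dimension $n$. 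The key estimate to be established is
\[
\widehat{\mathscr Q}_\hbar(\psi)+\eps_0\|w\|^2\ \geq\ \widehat{\mathscr Q}^{\mathsf{eff},-}_\hbar(u)\qquad\text{for all }\psi\in V_n .
\]
Granting it, and using $\widehat\lambda_n(\hbar)\leq-\eps_0$ so that $\widehat\lambda_n(\hbar)\|\psi\|^2\leq\widehat\lambda_n(\hbar)\|u\|^2-\eps_0\|w\|^2$, we get $\widehat{\mathscr Q}^{\mathsf{eff},-}_\hbar(u)\leq\widehat\lambda_n(\hbar)\|u\|^2$ for every $u\in F_n$, and the min--max principle for $\widehat{\mathscr L}^{\mathsf{eff},-}_\hbar$ gives $\widehat\lambda^-_n(\hbar)\leq\widehat\lambda_n(\hbar)$.

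To prove the key estimate, expand $\widehat{\mathscr Q}_\hbar(\psi)+\eps_0\|w\|^2-\widehat{\mathscr Q}^{\mathsf{eff},-}_\hbar(u)$: the contributions $\widehat{\mathscr Q}^1_\hbar(w)+\eps_0\|w\|^2$, $\hbar^4\langle\nabla_s u,\widehat g_\hbar^{-1}\nabla_s u\rangle-(1-C\hbar)\hbar^4\|\nabla_s u\|^2$, $\hbar^4\langle\nabla_s w,\widehat g_\hbar^{-1}\nabla_s w\rangle$ and $C\hbar^6\|u\|^2$ are all non--negative, so only the cross term must be controlled. Using the explicit form \eqref{eq.projection} of $\Pi_\hbar$, write $u=g\,v_\hbar$ with $g\in\ker(1_4-\mathcal B)$, so $\nabla_s u=(\nabla_s g)\,v_\hbar+g\,\nabla_s v_\hbar$; the second piece satisfies $\|g\,\nabla_s v_\hbar\|^2=\int_\Gamma|g|^2R_\hbar\dx\Gamma=\mathcal O(\hbar^4)\|u\|^2$ by \eqref{eq.Rh}, and for the first piece one integrates by parts in $s$, discards the $\mathsf{Ran}\,\Pi_\hbar$--component against $w\perp\mathsf{Ran}\,\Pi_\hbar$ using the bounded commutator $[\nabla_s,\Pi_\hbar]$ of Remark \ref{rem.comPi}, and is left only with terms carrying a factor $\nabla_s v_\hbar=\mathcal O(\hbar^2)$ or $[\nabla_s,P_{\mathbf{n}}]$. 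Together with the a priori bound $\hbar^4\|\nabla_s\psi\|^2_{L^2(\widehat{\mathcal V}_\hbar)}\leq C\|\psi\|^2$ (from $\widehat{\mathscr Q}_\hbar(\psi)\leq-\eps_0\|\psi\|^2$ and $\widehat{\mathscr Q}^1_\hbar\geq-1-C\hbar^4$) and Cauchy--Schwarz absorbing part of the positive $\hbar^4\|\nabla_s w\|^2$--term, the cross term is bounded by $\tfrac{\eps_0}{2}\|w\|^2+C\hbar^6\|u\|^2$, closing the estimate. The main obstacle is exactly this bookkeeping: because the rotation $P_{\mathbf{n}}$ in $\Pi_\hbar$ depends on $s$, the commutator $[\nabla_s,\Pi_\hbar]$ is merely $\mathcal O(1)$, so one must extract the $\mathcal O(\hbar^2)$ gain of $\nabla_s v_\hbar$ (Remark \ref{rem.corBO}) and split each error into a piece absorbed by the gap $\eps_0\|w\|^2$, a piece absorbed by the positive tangential $w$--term, and a residue of size $\mathcal O(\hbar^6)\|u\|^2$ — which is precisely why $\widehat{\mathscr L}^{\mathsf{eff},\pm}_\hbar$ are defined with the relative slack $C\hbar$ and the absolute slack $C\hbar^6$.
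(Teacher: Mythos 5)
Your proposal is correct and follows essentially the same route as the paper: a reduction via the spectral projection $\Pi_\hbar$ of $\widehat{\mathscr{L}}^1_\hbar$, block-diagonality of $\widehat{\mathscr{Q}}^1_\hbar$ plus the gap from Propositions \ref{prop.approx1D} and \ref{prop:specfirstorders}, control of the tangential cross term through the commutator bound of Remark \ref{rem.comPi} and the Born--Oppenheimer estimate \eqref{eq.Rh}, absorption into the gap and the $C\hbar$, $C\hbar^6$ slacks, and the min--max principle (this is exactly Lemma \ref{lem:boundtens} and its use in Section \ref{sec:secapprox1}). The only difference is cosmetic: you handle the cross term via the explicit representation $u=g\,v_\hbar$ and an integration by parts, whereas the paper inserts $\Pi_\hbar=\Pi_\hbar^2$ and works with the commutators $[\nabla_s,\Pi_\hbar]$ directly -- the underlying mechanism is identical.
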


\subsection{Effective operator on the boundary}\label{sec:born-opp}
The aim of this section is to exhibit an effective operator on the boundary $\Gamma$. To do so, we will have to study the Born-Oppenheimer correction terms.
The effective operator up to the order $4$ on the boundary has the following quadratic form:
\begin{equation}\label{eq:dom-Lh-hat-second2}
\begin{aligned}
	&\widehat V^\mathsf{\Gamma,eff}=H^1(\Gamma)\cap\ker (1_4-\mathcal{B})\,,\\
	&\widehat{\mathscr{Q}}_{\hbar}^{\mathsf{\Gamma,eff}}(f) = 
		-\|f\|^2_{L^2(\Gamma)}
		+\hbar^4\int_{\Gamma}\Big(|\nabla_s f|^2 + \Big(-\frac{\kappa(s)^2}{4} + K(s)\Big)|f|^2\Big)\dx \Gamma\,.
\end{aligned}
\end{equation} 
More precisely, we obtain the following result.
\begin{theorem}\label{thm:approx2}
	For $\eps_{0}\in(0,1)$, $\hbar>0$, we let
	\[
		\widehat{\mathcal{N}}_{\epsilon_{0}, \hbar}=\{n\in\mathbb{N}^* : \widehat{\lambda}_n(\hbar)\leq-\eps_{0}\}\,.
	\]
	There exist positive constants $\hbar_{0}, C$ such that, for all $\hbar\in(0,\hbar_{0})$ and $n\in\widehat{\mathcal{N}}_{\eps_{0}, \hbar}$,
 	\begin{equation}
		\widehat{\lambda}_{n}^{\Gamma,-}(\hbar)\leq\widehat{\lambda}_{n}(\hbar)\leq \widehat{\lambda}_{n}^{\Gamma,+}(\hbar)\,,
	\end{equation}
	where $\widehat{\lambda}^{\Gamma,\pm}_n(\hbar)$ is the $n$-th eigenvalue of $\widehat{\mathscr{L}}^{\Gamma,\mathsf{eff}, \pm}_{\hbar}$ whose quadratic form is defined by:
	\[
	\begin{aligned}
	&\widehat V^\mathsf{\Gamma,eff,\pm}=\widehat V^{\Gamma,\mathsf{eff}}\,,\\
	&\widehat{\mathscr{Q}}_{\hbar}^{\mathsf{\Gamma,eff,\pm}}(f) = 
		-\|f\|^2_{L^2(\Gamma)}
		+\hbar^4\int_{\Gamma}\Big((1\pm C\hbar)|\nabla_s f|^2 + \Big(-\frac{\kappa(s)^2}{4} + K(s) \pm C \hbar\Big)|f|^2\Big)\dx \Gamma\,.
	\end{aligned}
	\]
\end{theorem}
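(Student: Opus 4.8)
The plan is to deduce Theorem~\ref{thm:approx2} from Theorem~\ref{thm:approx1} by restricting the operators $\widehat{\mathscr{L}}^{\mathsf{eff},\pm}_{\hbar}$ to $\mathsf{Ran}\,\Pi_\hbar$ and transporting them to operators on functions on $\Gamma$. By Proposition~\ref{prop:specfirstorders}, every element of $\mathsf{Ran}\,\Pi_\hbar$ reads $(s,\tau)\mapsto f(s)\,v_{\hbar}(s,\tau)$ with $f\in L^2(\Gamma,\C)^4\cap\ker(1_4-\mathcal{B})$, and since $v_{\hbar}(s,\cdot)$ is $L^2((0,\hbar^{-1}),\widehat a_\hbar\dx\tau)$-normalized for every $s\in\Gamma$, one has the \emph{exact} identity $\|f\,v_{\hbar}\|^2_{L^2(\widehat{\mathcal V}_\hbar;\widehat a_\hbar\dx\Gamma\dx\tau)}=\|f\|^2_{L^2(\Gamma)}$. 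Using the regularity of $s\mapsto v_{\hbar}(s,\cdot)$, the bound $\|R_\hbar\|_{L^\infty(\Gamma)}=\mathcal{O}(\hbar^4)$ from Remark~\ref{rem.corBO} and the commutator estimate of Remark~\ref{rem.comPi}, the map $T_\hbar\colon f\mapsto f\,v_{\hbar}$ is a bijection from $\widehat V^{\Gamma,\mathsf{eff}}=H^1(\Gamma)\cap\ker(1_4-\mathcal{B})$ onto $\widehat V^{\mathsf{eff}}_\hbar$ that preserves the $L^2$-norm; hence it suffices to compare the quadratic forms $\widehat{\mathscr{Q}}^{\mathsf{eff},\pm}_\hbar(T_\hbar f)$ with $\widehat{\mathscr{Q}}^{\Gamma,\mathsf{eff},\pm}_\hbar(f)$ and to invoke the min-max principle (all operators involved having compact resolvent).

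I would first treat the normal part of the form. Since $v_{\hbar}(s,\cdot)$ is the $L^2$-normalized first eigenfunction of $\mathcal{H}^\mathsf{Rob}_{\hbar,\kappa(s),K(s)}$ and $\widehat a_\hbar(s,\cdot)=a_{\hbar,\kappa(s),K(s)}$, a direct computation yields
\[\widehat{\mathscr{Q}}_{\hbar}^{1}(T_\hbar f)=\int_{\Gamma}\mathcal{Q}^\mathsf{Rob}_{\hbar,\kappa(s),K(s)}\bigl(v_{\hbar}(s,\cdot)\bigr)\,|f(s)|^2\dx\Gamma=\int_{\Gamma}\lambda^\mathsf{R}_1(s,\hbar)\,|f(s)|^2\dx\Gamma\,,\]
and Proposition~\ref{prop.approx1D} gives $\lambda^\mathsf{R}_1(s,\hbar)=-1+\hbar^4\bigl(K(s)-\tfrac{\kappa(s)^2}{4}\bigr)+\mathcal{O}(\hbar^6)$ uniformly in $s\in\Gamma$. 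For the tangential part I would expand $\nabla_s(T_\hbar f)=(\nabla_s f)\,v_{\hbar}+f\,\nabla_s v_{\hbar}$, so that, using $\int_0^{\hbar^{-1}}v_{\hbar}^2\,\widehat a_\hbar\dx\tau=1$ and $\int_0^{\hbar^{-1}}|\nabla_s v_{\hbar}|^2\,\widehat a_\hbar\dx\tau=R_\hbar(s)$,
\[\int_{\widehat{\mathcal V}_\hbar}|\nabla_s(T_\hbar f)|^2\,\widehat a_\hbar\dx\Gamma\dx\tau=\|\nabla_s f\|^2_{L^2(\Gamma)}+\mathcal{C}_\hbar(f)+\int_{\Gamma}|f|^2\,R_\hbar\dx\Gamma\,,\]
where $\mathcal{C}_\hbar(f)$ is a cross term which, after differentiating the identity $\int_0^{\hbar^{-1}}v_{\hbar}^2\,\widehat a_\hbar\dx\tau=1$ in $s$, is seen to carry the factor $\int_0^{\hbar^{-1}}v_{\hbar}^2\,\nabla_s\widehat a_\hbar\dx\tau=\mathcal{O}(\hbar^2)$ (by the exact expansion~\eqref{eq.Taylor3} and the exponential decay of $v_{\hbar}$ in Proposition~\ref{prop.approx1D}), whence $|\mathcal{C}_\hbar(f)|=\mathcal{O}\bigl(\hbar^2\,\|\nabla_s f\|_{L^2(\Gamma)}\,\|f\|_{L^2(\Gamma)}\bigr)$; the last term is $\mathcal{O}\bigl(\hbar^4\,\|f\|^2_{L^2(\Gamma)}\bigr)$ by Remark~\ref{rem.corBO}.

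Collecting these estimates and recalling the definition of $\widehat{\mathscr{Q}}^{\mathsf{eff},\pm}_\hbar$ in Theorem~\ref{thm:approx1}, I get
\[\widehat{\mathscr{Q}}^{\mathsf{eff},\pm}_\hbar(T_\hbar f)=-\|f\|^2_{L^2(\Gamma)}+\hbar^4\int_{\Gamma}\Bigl(|\nabla_s f|^2+\bigl(K-\tfrac{\kappa^2}{4}\bigr)|f|^2\Bigr)\dx\Gamma+\mathcal{R}_\hbar^\pm(f)\,,\]
where, using $\hbar^6\,\|\nabla_s f\|_{L^2(\Gamma)}\|f\|_{L^2(\Gamma)}\leq\hbar^5\bigl(\|\nabla_s f\|^2_{L^2(\Gamma)}+\|f\|^2_{L^2(\Gamma)}\bigr)$ and the fact that the $\pm C\hbar$ and $\pm C\hbar^6$ terms of $\widehat{\mathscr{Q}}^{\mathsf{eff},\pm}_\hbar$ are of the same or smaller order, the remainder satisfies $|\mathcal{R}_\hbar^\pm(f)|\leq C'\hbar^5\bigl(\|\nabla_s f\|^2_{L^2(\Gamma)}+\|f\|^2_{L^2(\Gamma)}\bigr)$. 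Choosing the constant $C$ in the definition of $\widehat{\mathscr{Q}}^{\Gamma,\mathsf{eff},\pm}_\hbar$ large enough and $\hbar$ small enough then gives, for all $f\in\widehat V^{\Gamma,\mathsf{eff}}$,
\[\widehat{\mathscr{Q}}^{\Gamma,\mathsf{eff},-}_\hbar(f)\leq\widehat{\mathscr{Q}}^{\mathsf{eff},-}_\hbar(T_\hbar f)\,,\qquad\widehat{\mathscr{Q}}^{\mathsf{eff},+}_\hbar(T_\hbar f)\leq\widehat{\mathscr{Q}}^{\Gamma,\mathsf{eff},+}_\hbar(f)\,.\]
Since $T_\hbar$ is a bijection between the form domains and an $L^2$-isometry, the min-max principle yields $\widehat{\lambda}^{\Gamma,-}_n(\hbar)\leq\widehat{\lambda}^-_n(\hbar)$ and $\widehat{\lambda}^+_n(\hbar)\leq\widehat{\lambda}^{\Gamma,+}_n(\hbar)$ for every $n$; combined with the double inequality $\widehat{\lambda}^-_n(\hbar)\leq\widehat{\lambda}_n(\hbar)\leq\widehat{\lambda}^+_n(\hbar)$ of Theorem~\ref{thm:approx1}, valid for $n\in\widehat{\mathcal N}_{\eps_0,\hbar}$, this is exactly the claimed estimate.

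The main obstacle is the uniform (in $s\in\Gamma$) control of the Born-Oppenheimer correction $R_\hbar$ and of the tangential derivatives of $v_{\hbar}$, namely the estimates gathered in Remark~\ref{rem.corBO}, together with the verification that $T_\hbar$ is a genuine isomorphism of form domains (this is where Remark~\ref{rem.comPi} enters). Once this is granted — and the analytic heart of it is already in Proposition~\ref{prop.approx1D} and Remark~\ref{rem.corBO} — the proof of Theorem~\ref{thm:approx2} reduces to a careful bookkeeping of powers of $\hbar$, the delicate point being to check that every error term ends up in a coefficient of $|\nabla_s f|^2$ or of $|f|^2$ of size $\mathcal{O}(\hbar^5)$, which is exactly what the $\pm C\hbar$ corrections in $\widehat{\mathscr{Q}}^{\Gamma,\mathsf{eff},\pm}_\hbar$ are designed to absorb.
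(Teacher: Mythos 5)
Your proposal is correct and follows essentially the same route as the paper's proof: identify $\mathsf{Ran}\,\Pi_\hbar$ with boundary functions via $f\mapsto f v_\hbar$, use $\widehat{\mathscr{Q}}_{\hbar}^{1}(fv_\hbar)=\int_{\Gamma}\lambda^{\mathsf{R}}_{1}(s,\hbar)|f|^2\dx\Gamma$ together with Proposition \ref{prop.approx1D}, expand $\nabla_s(fv_\hbar)$ into the three terms and absorb the Born--Oppenheimer and cross contributions into the $\pm C\hbar$ corrections, then conclude by the min-max principle combined with Theorem \ref{thm:approx1}. The only (harmless) variation is your treatment of the cross term via differentiation of the normalization identity for $v_\hbar$, where the paper instead uses Cauchy--Schwarz with $R_\hbar=\mathcal{O}(\hbar^4)$ and Young's inequality; both yield the same $\mathcal{O}(\hbar^2)\,\|\nabla_s f\|_{L^2(\Gamma)}\|f\|_{L^2(\Gamma)}$ bound.
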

Theorem \ref{theo.bis} is a consequence of the semiclassical reformulation in Section \ref{sec.semi}, of Proposition \ref{prop:red-bnd}, of the rescaling of Section \ref{sec.rescaled}, and of Theorem \ref{thm:approx2}.
\section{Proof of the results stated in Section \ref{sec.4}}\label{sec.5}
\subsection{Proof of the Agmon estimates of Proposition \ref{red.bound}}
Before stating the proof, let us recall the following lemma.
\begin{lemma}\label{lem:locformula}
Let $\chi$ and $\psi$ be Lipschitzian functions on $\Omega$, we have
\[
	\RE\braket{\nabla\psi, \nabla (\chi^2\psi)} = \|\nabla (\chi\psi)\|^2
	- \|\psi\nabla \chi\|^2.
\]
\end{lemma}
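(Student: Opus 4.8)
The strategy is to verify the identity pointwise almost everywhere on $\Omega$ and then integrate, so that no integration by parts is needed and no regularity of $\psi$ beyond the Lipschitz assumption is used (throughout, $\chi$ is taken real-valued, which is the only case occurring in the applications, where it is an Agmon weight or a smooth cut-off). First I would record that, $\Omega$ being bounded, the products $\chi\psi$ and $\chi^2\psi$ are again Lipschitz, hence lie in $H^1(\Omega)$, and satisfy the Leibniz rules
\[
	\nabla(\chi\psi) = (\nabla\chi)\psi + \chi\nabla\psi\,,\qquad \nabla(\chi^2\psi) = 2\chi(\nabla\chi)\psi + \chi^2\nabla\psi
\]
almost everywhere; all integrals below are over a set of finite measure with bounded integrands, hence finite.

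Next I would expand the left-hand side. Using the second Leibniz formula, the fact that $\braket{\nabla\psi,\chi^2\nabla\psi}_\Omega=\int_\Omega\chi^2|\nabla\psi|^2\dx\x$ is real, and the coordinatewise identity $2\RE\braket{\pa_j\psi,\psi}=\pa_j|\psi|^2$ (which follows from $|\psi|^2=\braket{\psi,\psi}$), one gets
\[
	\RE\braket{\nabla\psi,\nabla(\chi^2\psi)}_\Omega = \int_\Omega\chi^2|\nabla\psi|^2\dx\x + \int_\Omega\chi\,\nabla\chi\cdot\nabla|\psi|^2\dx\x\,.
\]
On the other hand, expanding pointwise by the first Leibniz formula and using again $2\RE\braket{\pa_j\psi,\psi}=\pa_j|\psi|^2$ gives
\[
	|\nabla(\chi\psi)|^2 = |\psi|^2|\nabla\chi|^2 + \chi^2|\nabla\psi|^2 + \chi\,\nabla\chi\cdot\nabla|\psi|^2\,,
\]
so that, integrating over $\Omega$ and subtracting $\|\psi\nabla\chi\|^2=\int_\Omega|\psi|^2|\nabla\chi|^2\dx\x$,
\[
	\|\nabla(\chi\psi)\|^2 - \|\psi\nabla\chi\|^2 = \int_\Omega\chi^2|\nabla\psi|^2\dx\x + \int_\Omega\chi\,\nabla\chi\cdot\nabla|\psi|^2\dx\x\,,
\]
which is exactly the expression obtained for the left-hand side; this would conclude the argument.

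There is no genuine obstacle here: the statement is a pointwise algebraic identity resting only on the Leibniz rule and on $2\RE\braket{\pa_j\psi,\psi}=\pa_j|\psi|^2$. The only point worth a word is the use of the Leibniz rule for merely Lipschitz factors, which is legitimate since on the bounded set $\Omega$ a product of Lipschitz functions is Lipschitz and hence differentiable almost everywhere with the expected derivative. The same computation applies verbatim when $\psi$ is $\C^4$-valued and $\chi$ is a real scalar, which is precisely the situation needed in the Agmon estimates.
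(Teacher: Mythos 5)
Your proof is correct. The paper in fact states Lemma \ref{lem:locformula} without proof --- it is recalled as the standard IMS-type localization identity just before the Agmon estimates --- and your argument (pointwise expansion of both sides via the Leibniz rule for Lipschitz products together with $2\RE\braket{\pa_j\psi,\psi}=\pa_j|\psi|^2$, then integration, with no integration by parts) is exactly the standard way to establish it. Your caveat that $\chi$ is taken real-valued is the right one: the identity as written does require it, and this is consistent with the paper's use, where $\chi$ is always a real Agmon weight or cut-off and $\psi$ is $\C^4$-valued.
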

Let us now give the proof of Proposition \ref{red.bound}.
\begin{proof}
We notice first that by \eqref{prop:red-bnd},
\begin{equation}\label{eq:lowboundsemic}
	\mathscr{L}_h\geq -h.
\end{equation}

Let us denote by $a_h(\cdot,\cdot)$ the sesquilinear form associated with $\mathscr{Q}_h$ defined in \eqref{eq:sesformsc}.
Let us define the following Lipschitzian functions
\[
	\x\in \Omega\mapsto \Phi(\x) = \gamma \mathrm{dist}(\x,\partial \Omega)\in \R
\]
and
\[
	\x\in \Omega\mapsto \chi_h(\x) = e^{\Phi(\x)h^{-1/2}}\in \R\,.
\]
Since $\chi_h$ is real-valued and Lipschitzian, we get that $\chi_h^2\psi_h$ belongs to $\mathcal{D}(\mathscr{Q}_h)$.
We have that
\[\begin{split}
	&
	a_h(\psi_h,\chi_h^2\psi_h) 
	=\Re\braket{\mathscr{L}_h\psi_h,\chi_h^2\psi_h}_\Omega
	\\
	& 
	\qquad
	=
	\Re\left\{h^2\braket{\nabla\psi_h,\nabla(\chi_h^2\psi_h)}_{\Omega}
	+\int_{\Gamma}\left(\frac{\kappa}{2}h^2-h^{\frac{3}{2}}\right)|\chi_h\psi_h|^2\dx \Gamma\right\}\,.
\end{split}\]
By Lemma \ref{lem:locformula}, 
we get that
\[\begin{split}
	&a_h(\psi_h,\chi_h^2\psi_h) 
	=
	\mathscr{Q}_h(\chi_h\psi_h)- h^2\|\psi_h\nabla \chi_h\|_{L^2(\Omega)}^2\,.
\end{split}\]
Recall that $\psi_h$ is an eigenfunction of $\mathscr{L}_h$ associated with the eigenvalue $\lambda$, so that
\begin{equation}\label{eq:ineq.a}
	\mathscr{Q}_h(\chi_h\psi_h)- h^2\|\psi_h\nabla \chi_h\|_{L^2(\Omega)}^2
	=\lambda\|\chi_h\psi_h\|^2_{L^2(\Omega)}\,.
\end{equation}
Let $R\geq 1$ and $\tilde c>1$.
Let us introduce a quadratic partition of unity of $\Omega$
\[\chi_{1,h, R}^2+\chi_{2,h, R}^2=1\,, \]
in order to study the asymptotic behavior of $\psi_h$ in the interior and near the boundary $\Gamma$ separately. 
We assume that $\chi_{1,h, R}$ satisfies
\[
	\chi_{1,h, R}(\x) = 
	\begin{cases}
		1&\mbox{ if }\mathrm{dist}(\x,\Gamma)\geq h^{1/2}R\\
		0&\mbox{ if }\mathrm{dist}(\x,\Gamma)\leq h^{1/2}R/2
	\end{cases}
\]
and that
\[
	\max(|\nabla \chi_{1,h, R}(\x) |,|\nabla \chi_{2,h, R}(\x) |)\leq 2\tilde ch^{-1/2}/R\,,
\]
for all $\x\in \Omega$.
Using again Lemma \ref{lem:locformula}, we get 
\[
	\mathscr{Q}_h(\chi_h\psi_h)
	=
	\sum_{k=1,2}\mathscr{Q}_h(\chi_{k,h,R}\chi_h\psi_h)
	-h^2\|\chi_h\psi_h\nabla\chi_{k,h,R}\|^2_{L^2(\Omega)}\,.
\]
We have $\mathscr{Q}_h(\chi_{1,h,R}\chi_h\psi_h)\geq 0$ because of a support consideration.
Let us also remark that
\[
	h^2\|\chi_h\psi_h\nabla \chi_{k,h,R}\|^2_{L^2(\Omega)}
	\leq 
	h4\tilde c^2/R^2\|\chi_h\psi_h\|^2_{L^2(\Omega)}
\]
and
\[
	h^2\|\psi_h\nabla \chi_h\|^2_{L^2(\Omega)}
	\leq 
	h\gamma^2\|\chi_h\psi_h\|^2_{L^2(\Omega)}\,.
\]
We deduce from \eqref{eq:ineq.a} that
\[
	\lambda\|\chi_h\psi_h\|^2_{L^2(\Omega)}
	\geq
	\mathscr{Q}_h(\chi_{2,h,R}\chi_h\psi_h)
	-h\|\psi_h \chi_h\|_{L^2(\Omega)}^2
	\left(\gamma^2 + 8\widetilde{c}^2R^{-2}\right)\,,
\]
so that
\begin{equation}\label{eq:ineq.b}
	h(\eps_0-\gamma^2-8\widetilde{c}^2R^{-2})\|\chi_h\psi_h\|^2_{L^2(\Omega)}
	\leq-\mathscr{Q}_h(\chi_{2,h,R}\chi_h\psi_h)\,.
\end{equation}
By Lemma \ref{lem:locformula}, we get that
\[\begin{split}
	&
	\mathscr{Q}_h(\chi_{2,h,R}\chi_h\psi_h) = a_h(\psi_h, (\chi_{2,h,R}\chi_h)^2\psi_h) + h^2\|\psi_h\nabla (\chi_{2,h,R}\chi_h)\|_{L^2(\Omega)}^2
	\\
	&
	\quad
	= \lambda \|\chi_{2,h,R}\chi_h\psi_h\|^2_{L^2(\Omega)}
	+
	h^2\|\psi_h\chi_h\nabla \chi_{2,h,R}\|_{L^2(\Omega)}^2
	+h\|\psi_h\chi_{2,h,R}\chi_h\nabla\Phi\|_{L^2(\Omega)}^2
	\\
	&
	\qquad
	+h^{3/2}2\Re\braket{\psi_he^{2\Phi h^{-1/2}}\nabla \chi_{2,h,R},\psi_h\chi_{2,h,R}\nabla \Phi}_{L^2(\Omega)}
	\\
	&
	\quad
	\geq
	\lambda \|\chi_{2,h,R}\chi_h\psi_h\|^2_{L^2(\Omega)}
	-h4\tilde c/R \gamma \|\chi_h\psi_h\|^2_{L^2(\Omega)}\,.
\end{split}\]
Hence, we obtain by \eqref{eq:ineq.b} and \eqref{eq:lowboundsemic} that
\[\begin{split}
	&\left(\eps_0-\gamma^2-8\widetilde{c}^2R^{-2}-4\tilde cR^{-1} \gamma \right)\|\chi_h\psi_h\|^2_{L^2(\Omega)}
	\leq \|\chi_{2,h,R}\chi_h\psi_h\|^2_{L^2(\Omega)}
	\\
	&
	\quad
	\leq \|\psi_h\|^2_{L^2(\Omega)}e^{2R\gamma}\,.
\end{split}\]
Let us fix $R>0$ so that 
\[
	\left(\eps_0-\gamma^2-8\widetilde{c}^2R^{-2}-4\tilde cR^{-1} \gamma \right)\geq (\eps_0-\gamma^2)/2>0\,.
\]
We get that%
\[
	\|\chi_h\psi_h\|_{L^2(\Omega)}\leq C\|\psi_h\|_{L^2(\Omega)}\,,
\]
and the conclusion follows by \eqref{eq:ineq.a}.
\end{proof}
\subsection{Proof of Proposition \ref{prop.approx1D}}
\begin{proof}
The proof follows from the method by Helffer and Kachmar used in \cite{HK-tams}. Let us recall the strategy. The operator is
\[
	\mathcal{H}^\mathsf{Rob}_{\hbar,\kappa,K}=-a_{\hbar,\kappa,K}^{-1}(\tau)\partial_{\tau}\left(a_{\hbar,\kappa,K}(\tau)\partial_{\tau}\right)=-\partial^2_{\tau}+\frac{\hbar^2 \kappa-2\hbar^4K\tau}{a_{\hbar,\kappa,K}(\tau)}\partial_{\tau}\,,
\]
\[
	\left(\partial_{\tau}+1-\frac{\kappa\hbar^2}{2}\right)u(\cdot,0) =  0\,.
\]
We look for quasi-eigenvalues and quasi-eigenfunctions expressed as formal series:
\[\lambda=\lambda_{0}+\hbar^2\lambda_{1}+\hbar^4\lambda_{2}\,,\qquad\psi=\psi_{0}+\hbar^2\psi_{1}+\hbar^4\psi_{2}\,.\]
By writing the formal eigenvalue equation, expanding the operator and the boundary condition in powers of $\hbar^2$, we get the following succession of equations. In the following, the integration interval is $(0,+\infty)$. 
The first one is
\[-\pa^2_{\tau}\psi_{0}=\lambda_{0}\psi_{0}\,,\qquad (\partial_{\tau}+1)\psi_{0}(0)=0\,.\]
We get that $\lambda_{0}=-1$ and $\psi_{0}(\tau)=\sqrt{2}e^{-\tau}$.
Then, we must solve the equation:
\[\left(-\pa^2_\tau+1\right)\psi_1=\left(\lambda_{1}-\kappa\partial_{\tau}\right)\psi_{0}\,,\qquad(\partial_{\tau}+1)\psi_{1}(0)-\frac{\kappa}{2}\psi_{0}(0)=0\,.\]
By taking the scalar product with $\psi_{0}$, we find (by the Fredholm alternative) that there is a solution if and only if there holds
\[\langle\left(-\pa^2_\tau+1\right)\psi_{1},\psi_{0}\rangle_{L^2(0,+\infty)}=\langle\left(\lambda_{1}-\kappa\partial_{\tau}\right)\psi_{0},\psi_{0}\rangle_{L^2(0,+\infty)}\,.\]
Note that $\langle\partial_{\tau}\psi_{0},\psi_{0}\rangle_{L^2(0,+\infty)}=-1$ and that, by integration by parts,
\[
	\begin{split}
		&\langle\left(-\pa^2_\tau+1\right)\psi_{1},\psi_{0}\rangle_{L^2(0,+\infty)}
		=\braket{\left(\partial_{\tau}+1\right)\psi_{1}(0),\psi_{0}(0)}+\langle\psi_{1},\left(-\pa^2_\tau+1\right)\psi_{0}\rangle_{L^2(0,+\infty)}\\
	&\quad\quad=\frac{\kappa}{2}|\psi_{0}(0)|^2=\kappa\,,
	\end{split}
\]
so that $\lambda_{1}=0$. We may actually give an explicit expression for a function $\psi_{1}$ satisfying
\[
	\left(-\pa^2_\tau+1\right)\psi_1=\kappa\psi_{0}\,,\qquad (\partial_{\tau}+1)\psi_{1}(0)-\frac{\kappa}{2}\psi_{0}(0)=0\,.
\]
The functions $\kappa\left(\frac{\tau}{\sqrt{2}}+c\right)e^{-\tau}$ are a solution for all $c\in\R$. We choose $c=0$ so that $\psi_{1}(\tau) = \frac{\kappa\tau}{\sqrt{2}}e^{-\tau}$. We can now consider the crucial step. We write
\[
	\left(-\pa^2_\tau+1\right)\psi_{2}=\lambda_{2}\psi_{0}-\kappa\partial_{\tau}\psi_{1}-\tau(-2K+\kappa^2)\partial_{\tau}\psi_{0}\,,
	\qquad(\partial_{\tau}+1)\psi_{2}(0)-\frac{\kappa}{2}\psi_{1}(0)=0\,.
\]
As previously, it is sufficient to find $\lambda_{2}$ such that there holds
\[
	\langle\left(-\pa^2_\tau+1\right)\psi_{2},\psi_{0}\rangle_{L^2(0,+\infty)}=\langle\lambda_{2}\psi_{0}-\kappa\partial_{\tau}\psi_{1}-\tau(-2K+\kappa^2)\partial_{\tau}\psi_{0},\psi_{0}\rangle_{L^2(0,+\infty)}\,.
\]
We have
\[\langle\left(-\pa^2_\tau+1\right)\psi_{2},\psi_{0}\rangle_{L^2(0,+\infty)}=\braket{\left(\partial_{\tau}+1\right)\psi_{2}(0),\psi_{0}(0)}=\frac{\kappa}{2}\braket{\psi_{1}(0),\psi_{0}(0)} = 0\,,\]
and
\[
	\begin{split}
		&\braket{-\kappa\pa_\tau \psi_1,\psi_0}_{L^2(0,+\infty)} = \kappa\braket{\psi_1,\pa_\tau \psi_0}_{L^2(0,+\infty)} + \kappa\braket{\psi_1(0), \psi_0(0)}
		 = -\kappa\braket{\psi_1, \psi_0}_{L^2(0,+\infty)}\,,\\
		 &
		 = -\kappa^2\int_0^{+\infty}\tau e^{-2\tau}\dx \tau
		 =  -\frac{\kappa^2}{4}\,,
		 \\
		 &\braket{-\tau(-2K+\kappa^2)\pa_\tau\psi_0,\psi_0}_{L^2(0,+\infty)} = 2(-2K+\kappa^2)\int_0^{+\infty}\tau e^{-2\tau}\dx\tau = -K+\frac{\kappa^2}{2}.
	\end{split}
\]
It follows that 
\[
	\lambda_{2}=K -\frac{\kappa^2}{4}\,.
\]
By using convenient cutoff functions (to satisfy the Dirichlet condition near $\hbar^{-1}$) and the spectral theorem, we easily get that
\[\mathrm{dist}\left(-1+\hbar^4\left(K -\frac{\kappa^2}{4}\right),\mathsf{sp}\left(\mathcal{H}^\mathsf{Rob}_{\hbar,\kappa,K}\right)\right)\leq C\hbar^6\,.\]
Then, by using straightforward adaptations of the results in \cite[Appendix]{KKR16} (we deal with the additional term in the boundary condition as a perturbation), we get the lower bound for $\lambda_{2}\left(\mathcal{H}^\mathsf{Rob}_{\hbar,\kappa,K}\right)$.

Therefore, the only eigenvalue in the spectrum of $\mathcal{H}^\mathsf{Rob}_{\hbar,\kappa,K}$ that is close to $-1+\hbar^4\left(K -\frac{\kappa^2}{4}\right)$ is the first one. The approximation of $u_{\hbar,\kappa,K}$ follows from elementary arguments and the Agmon estimates (to deal with the cutoff functions).
\end{proof}
\subsection{Proof of Theorem \ref{thm:approx1}}\label{sec:secapprox1}
Let us denote 
\[
	\Pi_\hbar^\perp = \mathsf{Id}-\Pi_{\hbar}\,.
\]
\subsubsection{Main Lemma}
The proof of the theorem relies on the following lemma (see also \cite{KKR16}).
\begin{lemma}\label{lem:boundtens}
	There exist $C,\hbar_0>0$ such that the following holds for all $\hbar\in(0,\hbar_0)$ and all $u\in \widehat{V}_{\hbar}$, 
	\[
		\begin{split}
			&
			\widehat{\mathscr{Q}_{\hbar}}(\Pi_\hbar u)
			\leq 
			\widehat{\mathscr{Q}}_{\hbar}^{1 }(\Pi_\hbar u)
			+\hbar^4(1+C\hbar)\int_{\widehat{\mathcal V}_{\hbar}}
			|\nabla_s \Pi_\hbar u|^2 \widehat a_\hbar
			\dx\Gamma \dx \tau
		\end{split}
	\]
	and
	\begin{multline*}	
		\widehat{\mathscr{Q}_{\hbar}}(u)
			\geq
			\widehat{\mathscr{Q}}_{\hbar}^{1 }(\Pi_\hbar u)
			+\hbar^4(1-C\hbar)\int_{\widehat{\mathcal V}_{\hbar}}
			|\nabla_s \Pi_\hbar u|^2 \widehat a_\hbar
			\dx\Gamma \dx \tau
			-C\hbar^6\|\Pi_\hbar u\|_{L^2(\widehat{\mathcal V}_{\hbar};\widehat a_\hbar \dx \Gamma \dx\tau)}^2\\
			+\widehat{\mathscr{Q}}_{\hbar}^{1 }(\Pi_\hbar^\perp u)
			+\hbar^4(1-C\hbar)\int_{\widehat{\mathcal V}_{\hbar}}
			|\nabla_s \Pi^\perp_\hbar u|^2\widehat a_\hbar\dx\Gamma \dx \tau
			-C\hbar^2\|\Pi_\hbar^\perp u\|_{L^2(\widehat{\mathcal V}_{\hbar};\widehat a_\hbar \dx \Gamma \dx\tau)}^2
			\,,
	\end{multline*}
\end{lemma}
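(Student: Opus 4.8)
The plan is to split $\widehat{\mathscr Q}_\hbar$ into a purely normal part and a purely tangential part by an exact identity, dispatch the upper bound trivially, and then for the lower bound use that $\Pi_\hbar$ diagonalizes the normal part while the tangential coupling between $\mathsf{Ran}\,\Pi_\hbar$ and $\mathsf{Ran}\,\Pi_\hbar^\perp$ is only of size $\hbar^2$. First I would record that, by Fubini, every $u$ in the form domain satisfies
$\widehat{\mathscr Q}_\hbar(u)=\widehat{\mathscr Q}^1_\hbar(u)+\hbar^4\int_{\widehat{\mathcal V}_\hbar}\langle\nabla_s u,\widehat g_\hbar^{-1}\nabla_s u\rangle\widehat a_\hbar\dx\Gamma\dx\tau$, the term $\widehat{\mathscr Q}^1_\hbar$ collecting exactly the $\tau$-derivative and boundary contributions. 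Since $\widehat g_\hbar(s,\tau)=(\mathsf{Id}-h^{1/2}\tau L(s))^2$ with $h^{1/2}\tau=\hbar^2\tau\le\hbar$ on $\widehat{\mathcal V}_\hbar$, for $\hbar$ small we get the uniform bound $(1-C\hbar)\mathsf{Id}\le\widehat g_\hbar^{-1}\le(1+C\hbar)\mathsf{Id}$. Applying the identity to $\Pi_\hbar u$ (which lies in $H^1$ by Remark~\ref{rem.comPi}) together with the upper bound on $\widehat g_\hbar^{-1}$ gives the first inequality of the lemma at once.

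For the lower bound I would apply the identity to $u$ and the bound $\widehat g_\hbar^{-1}\ge(1-C\hbar)\mathsf{Id}$, so $\widehat{\mathscr Q}_\hbar(u)\ge\widehat{\mathscr Q}^1_\hbar(u)+\hbar^4(1-C\hbar)\int_{\widehat{\mathcal V}_\hbar}|\nabla_s u|^2\widehat a_\hbar$. Since $\Pi_\hbar=\chi_{(-1-C\hbar^4,-1+C\hbar^4)}(\widehat{\mathscr L}^1_\hbar)$ (Proposition~\ref{prop:specfirstorders}) is a spectral projection of the self-adjoint operator with form $\widehat{\mathscr Q}^1_\hbar$, and $u,\Pi_\hbar u,\Pi_\hbar^\perp u$ all lie in its form domain, one has $\widehat{\mathscr Q}^1_\hbar(u)=\widehat{\mathscr Q}^1_\hbar(\Pi_\hbar u)+\widehat{\mathscr Q}^1_\hbar(\Pi_\hbar^\perp u)$; expanding also $\int|\nabla_s u|^2\widehat a_\hbar=\int|\nabla_s\Pi_\hbar u|^2\widehat a_\hbar+\int|\nabla_s\Pi_\hbar^\perp u|^2\widehat a_\hbar+2\Re\int\langle\nabla_s\Pi_\hbar u,\nabla_s\Pi_\hbar^\perp u\rangle\widehat a_\hbar$, everything matches the claimed right-hand side except for the tangential cross term $\mathcal C$. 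It then suffices to prove, in $L^2(\widehat a_\hbar)$-norms, $|\mathcal C|\le C\bigl(\|\Pi_\hbar u\|+\|\nabla_s\Pi_\hbar u\|\bigr)\|\Pi_\hbar^\perp u\|+C\hbar^2\bigl(\|\Pi_\hbar u\|+\|\nabla_s\Pi_\hbar u\|\bigr)\bigl(\|\Pi_\hbar^\perp u\|+\|\nabla_s\Pi_\hbar^\perp u\|\bigr)$: multiplying by $\hbar^4$ and applying Young's inequality to each product with a weight of order $\hbar$ or $\hbar^2$, the losses are at most $C\hbar^6\|\Pi_\hbar u\|^2+C\hbar^2\|\Pi_\hbar^\perp u\|^2+C\hbar^5\bigl(\|\nabla_s\Pi_\hbar u\|^2+\|\nabla_s\Pi_\hbar^\perp u\|^2\bigr)$, the last being absorbed into the $\hbar^4(1-C\hbar)$-terms after enlarging $C$.

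The cross term is the crux. Write, as in \eqref{eq.projection}, $\Pi_\hbar u=v_\hbar\,q$ with $q(s)=P_{\mathcal B}(s)\,g(s)$, $g(s)=\langle u(s,\cdot),v_\hbar(s,\cdot)\rangle_{L^2((0,\hbar^{-1}),\widehat a_\hbar\dx\tau)}$, and $P_{\mathcal B}(s)=P_{\n}(s)\left(\begin{array}{cc}1_2&0\\0&0\end{array}\right)P_{\n}^*(s)$ the orthogonal projection of $\C^4$ onto $\ker(\mathcal B(s)-1_4)$. Two structural facts drive the estimate. First, differentiating $P_{\mathcal B}^2=P_{\mathcal B}$ yields $P_{\mathcal B}(\nabla_sP_{\mathcal B})P_{\mathcal B}=0$, which kills the only potentially $\mathcal O(1)$ term proportional to $\|\Pi_\hbar u\|$ in $\Pi_\hbar\nabla_s\Pi_\hbar^\perp u=-\Pi_\hbar[\nabla_s,\Pi_\hbar]u$, leaving $\Pi_\hbar\nabla_s\Pi_\hbar^\perp u=\mathcal O(\hbar^2)\|u\|+\mathcal O(1)\|\Pi_\hbar^\perp u\|$, where the $\mathcal O(\hbar^2)$ comes from $\|\nabla_s v_\hbar\|$, $\sup_s\|\nabla_s^2v_\hbar\|$ (Remark~\ref{rem.corBO}) and $\|\nabla_s\widehat a_\hbar\|$ (exact formula \eqref{eq.Taylor3}). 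Second, $\bigl\|\langle\Pi_\hbar^\perp u(s,\cdot),v_\hbar(s,\cdot)\rangle_{L^2((0,\hbar^{-1}),\widehat a_\hbar\dx\tau)}\bigr\|_{L^2(\Gamma)}\le\|\Pi_\hbar^\perp u\|$, so pairing with $\Pi_\hbar^\perp u$ any function $v_\hbar(s,\tau)\cdot A(s)$ ($A$ independent of $\tau$) is controlled by $\|A\|_{L^2(\Gamma)}\|\Pi_\hbar^\perp u\|$. Now split $\nabla_s\Pi_\hbar u=v_\hbar P_{\mathcal B}\nabla_s g+(\nabla_s v_\hbar)q+v_\hbar(\nabla_sP_{\mathcal B})g$: the first piece equals $\Pi_\hbar\nabla_s u$ up to $\mathcal O(\hbar^2)\|u\|$, so its pairing with $\nabla_s\Pi_\hbar^\perp u$ is $\langle\nabla_s u,\Pi_\hbar\nabla_s\Pi_\hbar^\perp u\rangle+\mathcal O(\hbar^2)\|u\|\|\nabla_s\Pi_\hbar^\perp u\|$, hence of the required size by the first fact; the second piece is directly $\mathcal O(\hbar^2)$; and the third piece, after one integration by parts in $s$ on the closed manifold $\Gamma$ — legitimate because only the smooth factor $v_\hbar P_{\mathcal B}$ and $g\in H^1(\Gamma)$ are differentiated, not $u$ — becomes $-\langle\dive_g\bigl(\widehat a_\hbar v_\hbar(\nabla_sP_{\mathcal B})g\bigr),\Pi_\hbar^\perp u\rangle$, whose $v_\hbar\times(\tau\text{-independent})$-type contributions (including $\widehat a_\hbar v_\hbar(\Delta_gP_{\mathcal B})g$ and the $\nabla_s g$ contributions) are bounded by the second fact, while the remaining terms carry $\nabla_s v_\hbar$ or $\nabla_s\widehat a_\hbar$ and are $\mathcal O(\hbar^2)$. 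Collecting these bounds yields the inequality on $|\mathcal C|$ needed above.

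The hard part, as the above makes clear, is the size of the tangential coupling $\mathcal C$: the admissible loss on $\|\Pi_\hbar u\|^2$ is only $\hbar^6$, so the bare commutator bound $\|[\nabla_s,\Pi_\hbar]\cdot\|\le C\|\cdot\|$ of Remark~\ref{rem.comPi} is not sufficient and one genuinely needs the algebraic cancellation $P_{\mathcal B}(\nabla_sP_{\mathcal B})P_{\mathcal B}=0$ together with the $L^2(\Gamma)$-estimate on $\langle\Pi_\hbar^\perp u,v_\hbar\rangle$; this is exactly the "semiclassical degeneracy" stemming from the vectorial nature of the MIT boundary condition alluded to in the introduction. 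Everything else — the identity, the metric comparison, the spectral orthogonality, and the Young inequalities — is routine.
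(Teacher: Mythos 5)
Your proposal is correct and follows essentially the same route as the paper: the exact normal/tangential splitting together with the metric comparison $\widehat g_\hbar^{-1}=\mathrm{Id}+\mathcal O(\hbar)$ on $\{0<\tau<\hbar^{-1}\}$, the spectral orthogonality $\widehat{\mathscr{Q}}^{1}_\hbar(u)=\widehat{\mathscr{Q}}^{1}_\hbar(\Pi_\hbar u)+\widehat{\mathscr{Q}}^{1}_\hbar(\Pi_\hbar^\perp u)$, and a treatment of the tangential cross term in which the leading coupling is cancelled and the remaining commutator-type terms are estimated after an integration by parts on $\Gamma$ using the Born--Oppenheimer bounds of Remarks \ref{rem.corBO} and \ref{rem.comPi}, followed by the asymmetric Young inequality with weights $\hbar^{2}$ and $\hbar^{-2}$; your identity $P_{\mathcal B}(\nabla_s P_{\mathcal B})P_{\mathcal B}=0$ is simply an explicit version of the cancellation the paper gets by inserting $\Pi_\hbar=\Pi_\hbar^2$, $\Pi_\hbar^\perp=(\Pi_\hbar^\perp)^2$ and using orthogonality of the projections. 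The only blemish is cosmetic: your stated intermediate bound on $\mathcal C$ omits terms of the type $\|\nabla_s\Pi_\hbar^\perp u\|\,\|\Pi_\hbar^\perp u\|$ and $\|\Pi_\hbar^\perp u\|^2$ that your own estimates actually produce, but these are absorbed by the same weighted Young step (at cost $\mathcal O(\hbar^5)\|\nabla_s\Pi_\hbar^\perp u\|^2+\mathcal O(\hbar^2)\|\Pi_\hbar^\perp u\|^2$), so the conclusion is unaffected.
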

\begin{proof}
Let us remark first that there exist $C,\hbar_0>0$ such that for all $\hbar\in(0,\hbar_0)$,
	\[
		\begin{split}
			&
			\Big|
			\int_{\widehat{\mathcal V}_{\hbar}}
			\Big(
			|\nabla_s u|^2\widehat{a}_{\hbar}
			-
			\langle\nabla_{s} u,\widehat{a}_{\hbar}\widehat g^{-1}_\hbar\nabla_{s} u\rangle
			\Big)
			\dx\Gamma \dx \tau
			\Big|
			\leq C\hbar \int_{\widehat{\mathcal V}_{\hbar}}
			|\nabla_s u|^2\widehat{a}_{\hbar}
			\dx\Gamma \dx \tau
			\,,
		\end{split}
	\]
	since $0<\tau<\hbar^{-1}$. The upper bound follows.
	Let us now focus on the lower bound. Since $\Pi_h$ is a spectral projection of $\widehat{\mathscr{L}}_\hbar^{1}$, we get that for all $u\in \widehat{V}_\hbar$,
	\[
		\widehat{\mathscr{Q}_{\hbar}^1}(u) = \widehat{\mathscr{Q}_{\hbar}^1}(\Pi_\hbar u) + \widehat{\mathscr{Q}_{\hbar}^1}(\Pi_\hbar^\perp u)\,.
	\]
We also have
	\[
	\begin{split}
		&
		\int_{\widehat{\mathcal V}_{\hbar}}
		|\nabla_s u|^2\widehat a_\hbar	
		\dx\Gamma \dx \tau
		 = 
		 \int_{\widehat{\mathcal V}_{\hbar}}
		|\nabla_s \left(\Pi_\hbar u + \Pi^\perp_\hbar u\right)|^2\widehat a_\hbar	
		\dx\Gamma \dx \tau
		 = 
		 \int_{\widehat{\mathcal V}_{\hbar}}
		|\nabla_s (\Pi_\hbar u)|^2\widehat a_\hbar	
		\dx\Gamma \dx \tau
		\\
		&
		 +
		 \int_{\widehat{\mathcal V}_{\hbar}}
		|\nabla_s( \Pi_\hbar^\perp u)|^2\widehat a_\hbar	
		\dx\Gamma \dx \tau
		+2\Re \int_{\widehat{\mathcal V}_{\hbar}}
		\braket{\nabla_s (\Pi_\hbar u),\nabla_s (\Pi_\hbar^\perp u)}\widehat a_\hbar	
		\dx\Gamma \dx \tau\,.
		%
		%
	\end{split}
	\]
	Let us analyze the double product. We have
	\[
	\begin{split}
		&
		\int_{\widehat{\mathcal V}_{\hbar}}
		\braket{\nabla_s (\Pi_\hbar u),\nabla_s (\Pi_\hbar^\perp u)}\widehat a_\hbar	
		\dx\Gamma \dx \tau
		=
	          \int_{\widehat{\mathcal V}_{\hbar}}
		\braket{\nabla_s (\left(\Pi_\hbar\right)^2 u),\nabla_s (\left(\Pi_\hbar^\perp\right)^2 u)}\widehat a_\hbar	
		\dx\Gamma \dx \tau
		\\
		&
		=
		\int_{\widehat{\mathcal V}_{\hbar}}
		\braket{\Pi_\hbar \nabla_s  (\Pi_\hbar u),\Pi_\hbar^\perp\nabla_s  (\Pi_\hbar^\perp u)}\widehat a_\hbar	
		\dx\Gamma \dx \tau
		+
		 \int_{\widehat{\mathcal V}_{\hbar}}
		\braket{\Pi_\hbar \nabla_s (\Pi_\hbar u),\left[\nabla_s,\Pi_\hbar^\perp\right] \Pi_\hbar^\perp u}\widehat a_\hbar	
		\dx\Gamma \dx \tau
		\\
		&
		+
		\int_{\widehat{\mathcal V}_{\hbar}}\!\!\!
		\braket{\left[\nabla_s,\Pi_\hbar\right]\Pi_\hbar  u,\Pi_\hbar^\perp\nabla_s  (\Pi_\hbar^\perp u)}\widehat a_\hbar	
		\dx\Gamma \dx \tau
		+
		\int_{\widehat{\mathcal V}_{\hbar}}\!\!\!
		\braket{\left[\nabla_s,\Pi_\hbar\right] \Pi_\hbar u,\left[\nabla_s,\Pi_\hbar^\perp\right] \Pi^\perp_\hbar u}\widehat a_\hbar	
		\dx\Gamma \dx \tau
		\,.
	\end{split}
	\]	
	Since $\Pi_\hbar$ is an orthogonal projection of $L^2(\widehat{\mathcal V}_{\hbar},\widehat a_\hbar\dx\Gamma \dx \tau)$, we get that 
	\[
		\Re \int_{\widehat{\mathcal V}_{\hbar}}
		\braket{\Pi_\hbar \nabla_s  (\Pi_\hbar u),\Pi_\hbar^\perp\nabla_s  (\Pi_\hbar^\perp u)}\widehat a_\hbar	
		\dx\Gamma \dx \tau
		=0
		\,.
	\]
	Moreover, by commuting $\Pi_{\hbar}^\perp$ and $\nabla_{s}$, by using an integration by parts and Remark \ref{rem.corBO} (see also Remark \ref{rem.comPi}), we have
	\[
		\begin{split}
		&
		\Big|
		\int_{\widehat{\mathcal V}_{\hbar}}
		\braket{\left[ \nabla_s,\Pi_\hbar\right]\Pi_\hbar  u,\Pi_\hbar^\perp\nabla_s  (\Pi_\hbar^\perp u)}\widehat a_\hbar	
		\dx\Gamma \dx \tau
		\Big|
		\\
		&
		\leq
		C\left(\|\Pi_\hbar u\|_{L^2(\widehat{\mathcal V}_{\hbar};\widehat a_\hbar \dx \Gamma \dx\tau)}^2 + \|\nabla_s\Pi_\hbar u\|_{L^2(\widehat{\mathcal V}_{\hbar};\widehat a_\hbar \dx \Gamma \dx\tau)}^2\right)^{1/2}
		\|\Pi_\hbar^\perp u\|_{L^2(\widehat{\mathcal V}_{\hbar};\widehat a_\hbar \dx \Gamma \dx\tau)}
		\,.
		\end{split}
	\]
	Using the inequality $|2ab|\leq \hbar^2 a^2 + \hbar^{-2}b^2$, we obtain that
	\[
	\begin{split}
		&
		\int_{\widehat{\mathcal V}_{\hbar}}
		|\nabla_s u|^2\widehat a_\hbar	
		\dx\Gamma \dx \tau
		\\
		&
		 \geq 
		 \int_{\widehat{\mathcal V}_{\hbar}}
		|\nabla_s (\Pi_\hbar u)|^2\widehat a_\hbar	
		\dx\Gamma \dx \tau
		 +
		 \int_{\widehat{\mathcal V}_{\hbar}}
		|\nabla_s( \Pi_\hbar^\perp u)|^2\widehat a_\hbar	
		\dx\Gamma \dx \tau
		\\
		&
		-C\left(\|\Pi_\hbar u\|_{L^2(\widehat{\mathcal V}_{\hbar};\widehat a_\hbar \dx \Gamma \dx\tau)}^2 + \|\nabla_s\Pi_\hbar u\|_{L^2(\widehat{\mathcal V}_{\hbar};\widehat a_\hbar \dx \Gamma \dx\tau)}^2\right)^{1/2}
		\|\Pi_\hbar^\perp u\|_{L^2(\widehat{\mathcal V}_{\hbar};\widehat a_\hbar \dx \Gamma \dx\tau)}
		\\
		&
		 \geq 
		(1-C\hbar^2) \int_{\widehat{\mathcal V}_{\hbar}}
		|\nabla_s (\Pi_\hbar u)|^2\widehat a_\hbar	
		\dx\Gamma \dx \tau
		-C\hbar^2\|\Pi_\hbar u\|_{L^2(\widehat{\mathcal V}_{\hbar};\widehat a_\hbar \dx \Gamma \dx\tau)}^2
		\\
		&
		 +
		 \int_{\widehat{\mathcal V}_{\hbar}}
		|\nabla_s( \Pi_\hbar^\perp u)|^2\widehat a_\hbar	
		\dx\Gamma \dx \tau
		-C\hbar^{-2}\|\Pi_\hbar^\perp u\|_{L^2(\widehat{\mathcal V}_{\hbar};\widehat a_\hbar \dx \Gamma \dx\tau)}^2
		%
		%
	\end{split}
	\]
	and the result follows.
\end{proof}
\subsubsection{Proof of Theorem \ref{thm:approx1}}
The upper bound of Theorem \ref{thm:approx1} follows immediately from the min-max principle. Let us focus on the lower bound. We have by Proposition \ref{prop.approx1D} that there exist $\hbar_0, C>0$ such that for all $\hbar\in(0,\hbar_0)$ and all $u\in \widehat{V}_\hbar$,
\[
\begin{split}
			&
			\widehat{\mathscr{Q}}_{\hbar}^{1 }(\Pi_\hbar^\perp u)
			+\hbar^4(1-C\hbar)\int_{\widehat{\mathcal V}_{\hbar}}
			|\nabla_s \Pi^\perp_\hbar u|^2\widehat a_\hbar\dx\Gamma \dx \tau
			-C\hbar^2\|\Pi_\hbar^\perp u\|_{L^2(\widehat{\mathcal V}_{\hbar};\widehat a_\hbar \dx \Gamma \dx\tau)}^2
			\\
			&
			\geq
			-\frac{3}{4}\eps_0\|\Pi_\hbar^\perp u\|_{L^2(\widehat{\mathcal V}_{\hbar};\widehat a_\hbar \dx \Gamma \dx\tau)}^2
\end{split}
\]
Hence, Lemma \ref{lem:boundtens} ensures that 
\[	
		\widehat{\mathscr{Q}_{\hbar}}(u)
			\geq
			\widehat{\mathscr{Q}}_{\hbar}^{\mathsf{eff,-}}(\Pi_\hbar u)
			-\frac{3}{4}\eps_0\|\Pi_\hbar^\perp u\|_{L^2(\widehat{\mathcal V}_{\hbar};\widehat a_\hbar \dx \Gamma \dx\tau)}^2
			\,.
\]
Since $\Pi_\hbar$ is an orthogonal projection of $L^2(\widehat{\mathcal V}_{\hbar};\widehat a_\hbar \dx \Gamma \dx\tau)$, we get that the spectrum of $\widehat{\mathscr{L}}_\hbar$ lying below $-\eps_0$ is discrete and coincides with the one of  $\widehat{\mathscr{L}}_\hbar^{\mathsf{eff},-}$.
\subsection{Proof of Theorem \ref{thm:approx2}}
\begin{proof}
We first notice that, by definition of $v_{\hbar}$ (see Propositions \ref{prop.approx1D} and  \ref{prop:specfirstorders}),
\[\widehat{\mathscr{Q}}_{\hbar}^{1 }(fv_{\hbar})=\int_{\Gamma}\lambda^\mathsf{R}_{1}(s,\hbar)|f(\sigma)|^2\dx\Gamma\,.\]
Then we have
\[
	\begin{split}
		&
		\int_{\widehat{\mathcal V}_{\hbar}}
			|\nabla_s (fv_\hbar)|^2\widehat a_\hbar
			\dx\Gamma \dx \tau
			=
			\int_{\widehat{\mathcal V}_{\hbar}}
			|\nabla_s f|^2|v_\hbar|^2\widehat a_\hbar
			\dx\Gamma \dx \tau
			+
			\int_{\widehat{\mathcal V}_{\hbar}}
			|\nabla_s v_\hbar|^2|f|^2\widehat a_\hbar
			\dx\Gamma \dx \tau
			\\
			&
			+ 2\Re
			\int_{\widehat{\mathcal V}_{\hbar}}
			\braket{v_\hbar\nabla_s f,f\nabla_sv_\hbar}\widehat a_\hbar
			\dx\Gamma \dx \tau
			\,.
	\end{split}
\]
By Proposition \ref{prop.approx1D}, we get that $\|v_\hbar(s,\cdot)\|^2_{L^2((0,\hbar^{-1}),\widehat a_\hbar(s)\dx \tau)} = 1$, for all $s\in \Gamma$ so that
\[
	\int_{\widehat{\mathcal V}_{\hbar}}
			|\nabla_s f|^2|v_\hbar|^2\widehat a_\hbar
			\dx\Gamma \dx \tau 
			= 
			\int_{\Gamma}
			|\nabla_s f|^2
			\dx\Gamma
			\,,
\]
\[
	\int_{\widehat{\mathcal V}_{\hbar}}
			|\nabla_s v_\hbar|^2|f|^2\widehat a_\hbar
			\dx\Gamma \dx \tau 
			=
			\int_\Gamma R_\hbar|f|^2\dx \Gamma\,,
\]
and
\[
	\begin{split}
		&
		\Big|2\Re
			\int_{\widehat{\mathcal V}_{\hbar}}
			\braket{v_\hbar\nabla_s f,f\nabla_sv_\hbar}\widehat a_\hbar
			\dx\Gamma \dx \tau\Big|
			\leq 2\left(\int_\Gamma R_\hbar|f|^2\dx \Gamma \right)^{1/2}\left(\int_{\Gamma}
			|\nabla_s f|^2
			\dx\Gamma\right)^{1/2}
		\\
		&
		\leq
		\hbar^{-2}\int_\Gamma R_\hbar|f|^2\dx \Gamma 
		+
		\hbar^{2}\int_{\Gamma}
			|\nabla_s f|^2
			\dx\Gamma\,,
	\end{split}
\]
where $R_{\hbar}$ is defined in Remark \ref{rem.corBO}. The result follows.
\end{proof}
\section*{Acknowledgments}
This work was partially supported by the Henri Lebesgue Center (programme  \enquote{Investissements d'avenir}  -- n\textsuperscript{o} ANR-11-LABX-0020-01). L. L.T. was supported by the ANR project Moonrise ANR-14-CE23-0007-01. N. A. was partially supported by ERCEA Advanced Grant 669689-HADE, MTM2014-53145-P (MICINN, Gobierno de Espa\~na) and IT641-13 (DEUI, Gobierno Vasco). N.A. wishes to thank the IRMAR (Universit\'e de Rennes 1) where the paper was written, for the invitation and hospitality. The authors would also like to thank Albert Mas for pointing out the considerations in Section $\ref{rel. shell int}$ and for many stimulating discussions.


\begin{thebibliography}{10}

\bibitem{AMV15}
N.~Arrizabalaga, A.~Mas, and L.~Vega.
\newblock Shell interactions for {D}irac operators: on the point spectrum and
  the confinement.
\newblock {\em SIAM J. Math. Anal.}, 47(2):1044--1069, 2015.

\bibitem{benguria2016spectral}
R.~D. Benguria, S.~Fournais, E.~Stockmeyer, and H.~Van Den~Bosch.
\newblock {Spectral gaps of Dirac operators with boundary conditions relevant
  for graphene}.
\newblock {\em arXiv preprint, arXiv:1601.06607}, 2016.

\bibitem{booss2009calderon}
B.~Boo{\ss}-Bavnbek, M.~Lesch, and C.~Zhu.
\newblock {The Calder{\'o}n projection: new definition and applications}.
\newblock {\em J. of Geom. and Phys.}, 59(7):784--826, 2009.

\bibitem{brezis2010functional}
H.~Brezis.
\newblock {\em {Functional analysis, Sobolev spaces and partial differential
  equations}}.
\newblock Springer Science \& Business Media, 2010.

\bibitem{MIT101975}
A.~Chodos.
\newblock Field-theoretic {L}agrangian with baglike solutions.
\newblock {\em Phys. Rev. D (3)}, 12(8):2397--2406, 1975.

\bibitem{MIT101974}
A.~Chodos, R.~L. Jaffe, K.~Johnson, and C.~B. Thorn.
\newblock Baryon structure in the bag theory.
\newblock {\em Phys. Rev. D}, 10:2599--2604, Oct 1974.

\bibitem{MIT061974}
A.~Chodos, R.~L. Jaffe, K.~Johnson, C.~B. Thorn, and V.~F. Weisskopf.
\newblock New extended model of hadrons.
\newblock {\em Phys. Rev. D (3)}, 9(12):3471--3495, 1974.

\bibitem{DEKP16}
J.~Dittrich, P.~Exner, C.~K{\"u}hn, and K.~Pankrashkin.
\newblock {On eigenvalue asymptotics for strong {$\delta$}-interactions
  supported by surfaces with boundaries}.
\newblock {\em Asymptot. Anal.}, 97(1-2):1--25, 2016.

\bibitem{griffiths2008introduction}
D.~Griffiths.
\newblock {\em Introduction to elementary particles}.
\newblock John Wiley \& Sons, 2008.

\bibitem{H88}
B.~Helffer.
\newblock {\em Semi-classical analysis for the {S}chr\"odinger operator and
  applications}, volume 1336 of {\em Lecture Notes in Mathematics}.
\newblock Springer-Verlag, Berlin, 1988.

\bibitem{HK-tams}
B.~Helffer and A.~Kachmar.
\newblock {Eigenvalues for the Robin Laplacian in domains with variable
  curvature}.
\newblock {\em To appear in Trans. Amer. Math. Soc.}, 2015.

\bibitem{HKR15}
B.~Helffer, A.~Kachmar, and N.~Raymond.
\newblock {Tunneling for the Robin Laplacian in smooth planar domains}.
\newblock {\em {To appear in Commun. Contemp. Math.}}, 2016.

\bibitem{hosaka2001quarks}
A.~Hosaka and H.~Toki.
\newblock {\em Quarks, baryons and chiral symmetry}.
\newblock World Scientific, 2001.

\bibitem{johnson}
K.~Johnson.
\newblock The {MIT} bag model.
\newblock {\em Acta Phys. Pol.}, B(6):865--892, 1975.

\bibitem{KKR16}
A.~Kachmar, P.~Keraval, and N.~Raymond.
\newblock {Weyl formulae for the Robin Laplacian in the semiclassical limit}.
\newblock {\em {To appear in Confluentes Math. (arXiv: 1602.06179)}}, 2016.

\bibitem{Kato66}
T.~Kato.
\newblock {\em Perturbation theory for linear operators}.
\newblock Die Grundlehren der mathematischen Wissenschaften, Band 132.
  Springer-Verlag New York, Inc., New York, 1966.

\bibitem{LTPhd}
L.~Le~Treust.
\newblock {\em {Variational and topological methods for the study of nonlinear
  models from relativistic quantum mechanics.}}
\newblock Theses, {Universit{\'e} Paris Dauphine - Paris IX}, July 2013.

\bibitem{MR0129790}
A.~Messiah.
\newblock {\em Quantum mechanics. {V}ol. {I}}.
\newblock Translated from the French by G. M. Temmer. North-Holland Publishing
  Co., Amsterdam; Interscience Publishers Inc., New York, 1961.

\bibitem{PP16}
K.~Pankrashkin and N.~Popoff.
\newblock {An effective Hamiltonian for the eigenvalue asymptotics of the Robin
  Laplacian with a large parameter}.
\newblock {\em J. Math. Pures Appl.}, 2016.

\bibitem{reiher2014relativistic}
M.~Reiher and A.~Wolf.
\newblock {\em Relativistic quantum chemistry: the fundamental theory of
  molecular science}.
\newblock John Wiley \& Sons, 2014.

\bibitem{stockmeyer2016infinite}
E.~Stockmeyer and S.~Vugalter.
\newblock {Infinite mass boundary conditions for Dirac operators}.
\newblock {\em arXiv preprint arXiv:1603.09657}, 2016.

\bibitem{Thaller1992}
B.~Thaller.
\newblock {\em The {D}irac equation}.
\newblock Texts and Monographs in Physics. Springer-Verlag, Berlin, 1992.

\bibitem{Thomas1984}
A.~W. Thomas.
\newblock {\em Advances in nuclear physics: volume 13}, chapter Chiral symmetry
  and the bag model: a new starting point for nuclear physics, pages 1--137.
\newblock Springer US, Boston, MA, 1984.

\end{thebibliography}

\end{document}